\theoremstyle{plain}
\newtheorem{theorem}{Theorem}[section]
\newtheorem{prop}[theorem]{Proposition}
\newtheorem{lemma}[theorem]{Lemma}
\newtheorem{proposition}[theorem]{Proposition}
\theoremstyle{remark}
\theoremstyle{definition}
\newtheorem{remark}[theorem]{Remark}
\let\Section=\section
\def\section{\setcounter{equation}{0}\Section}
\def\RR{{\mathbb{R}}}
\def\d{{\mathrm{d}}}
\def\Rd{{\mathbb{R}^d}}
\def\PP{\mathbb{P}}
\def\cdd{\mathcal{D}}
\def\cff{\mathcal{F}}
\def\cgg{\mathcal{G}}
\def\chh{\mathcal{H}}
\def\coo{\mathcal{O}}
\def\cnn{\mathcal{N}}
\def\cmm{\mathcal{M}}
\def\cee{\mathcal{E}}
\def\lt{\left}
\def\rt{\right}
\def\e{\mathrm{e}}
\def\1{{\bm 1}}
\def\TT{{\mathbb{T}}}
\def\XX{{\mathbb{X}}}
\def\AA{{\mathbb{A}}}
\def\MM{{\mathbb{M}}}
\def\NN{{\mathbb{N}}}
\def\({\left(}
\def\){\right)}
\newcommand{\bplim}{\mbox{bp-}\! \lim}
\DeclareMathOperator{\supp}{supp}
\colorlet{darkred}{red!90!black}
\begin{document}
\title[Stable Fleming-Viot processes and their occupation times] {Long-time limits and occupation times for stable Fleming-Viot processes with decaying sampling rates}
\author[Kouritzin]{Michael A. Kouritzin}
\author[L\^e]{Khoa L\^e}
\email{mkouritz@math.ualberta.ca, n.le@imperial.ac.uk}
\address{Department of Mathematical and Statistical Sciences\\
University of Alberta\\
 Edmonton, AB  T6G 2G1 Canada
}
\address{Department of Mathematics\\
 South Kensington Campus\\
 Imperial College London\\ 
 London, SW7 2AZ, United Kingdom}
\keywords{Fleming-Viot process; $\alpha$-stable process; Historical process; Occupation times}
\subjclass[2010]{primary 60J80, 60F15; secondary 60B10, 60G57}
\begin{abstract}
A class of Fleming-Viot processes with decaying sampling rates and $\alpha$-stable motions that correspond to distributions with growing
populations are introduced and analyzed.
Almost sure long-time scaling limits for these processes are developed,
addressing the question of long-time population distribution
for growing populations.
Asymptotics in higher orders are investigated.
Convergence of particle location occupation and inhabitation time processes are also addressed and related by way of the historical process.
The basic results and techniques allow general Feller
motion/mutation and
may apply to other measure-valued Markov processes.
\end{abstract}
\maketitle
\section{Introduction}
	We consider an $M_1(\RR^d)$-valued Fleming-Viot process $X=(X_t,t\ge0)$ with mutation generator $-(-\Delta)^{\frac \alpha2}$ ($\alpha\in(0,2]$) and sampling rate $1/\phi(t)$ at time $t$ for some positive function $\phi$ defined on $\RR_+=[0,\infty)$. 
Such Fleming-Viot processes can be obtained by normalizing and conditioning the total mass of (possibly non-critical) Dawson-Watanabe processes to have total mass $\phi(t)$ for all time $t$. 
This was established by Etheridge and March \cite{MR1110534} for $\phi\equiv1$ and by Perkins \cite{MR1172149} for general nonnegative function $\phi$. 
Herein, we investigate the long-time asymptotic of such Fleming-Viot processes when $1/\phi$ satisfies an integrability condition at infinity. Examples of $\phi$ satisfying this integrability condition include $t\to e^{\beta t}$ and $t\to 1+t^{N}$ for $\beta>0$ and $N$ sufficiently large.

To be more precise, we let $ W=\left(W_{t},t\ge 0;\PP_{m }\right)$ be a  Dawson-Watanabe 
process with motion generator $-(-\Delta)^{\frac \alpha2}$  on $\RR^d$, linear growth $\beta$ and critical branching rate $ \eta >0$
corresponding to the operator 
$-(-\Delta)^{\frac \alpha2} u+\beta u-\frac\eta2 u^{2}$ on $\mathbb R^{d}$.
Then, $W$ is a measure-valued Markov process starting at a finite measure $ m$ such that
\[
M^W_t(f) :=W_{t}\left(f\right)-m \left(f\right)-\int _{0}^{t}
W_{s}\left((-(-\Delta)^{\frac \alpha2}+\beta)f\right){\rm d}s
\] 
is a continuous martingale with quadratic variation 
$ \langle M^W\left(f\right)\rangle_{t}=\int _{0}^{t}W_{s}\left(\eta f^{2}\right){\rm d}s$ 
for all $ f\in C^{2}_b\left(\mathbb R^{d}\right)$.
(The reader is referred to \cite{MR3155252} for information about stable processes and
to \cite{MR1249422} as well as \cite{MR1837287} for describing measure-valued processes
as martingale problems.)
$W$'s mass growth is subcritical, critical, supercritical 
if $ \beta <0$, $\beta =0$, $\beta >0$ respectively. 
The expected total mass is found to be $ m\left(1\right)e^{\beta t}$ by
substituting $f=1$ into the above equation and taking expectations.
Now, suppose a population is projected to grow according to a positive continuous function $\phi$.
Then, following Perkins \cite{MR1172149}, one finds that the corresponding Fleming-Viot process 
attained by taking the angular part of $W$ and conditioning $ W_{t}\left(1\right)$ to have  
total mass $\phi(t)$ at every time $t$ yields an $ M_1(\RR^d) $-valued process 
$X= \left(X_{t},t\ge 0\right)$, the $(\alpha,\phi)$ Fleming-Viot superprocess, starting at 
$ \mu =\frac{m}{m\left(E\right)}\in M_1(E)$ and
satisfying the martingale problem
\begin{equation}\label{FVMP}
X_{t}(f) =   \mu(f)+\int_{0}^{t}X_{s}(-(-\Delta)^{\frac \alpha2} f){\rm d}s+M^X_{t}(f)
\end{equation}
for all $ f\in  \cdd((-\Delta)^{\frac \alpha2})$ (the domain of $-(-\Delta)^{\frac \alpha2}$), 
where $ M^X_{t}(f)$ is a continuous martingale with quadratic variation
\begin{equation}\label{FVQV}
	\langle M^X\left(f\right)\rangle_{t}=\int _{0}^{t}\eta \phi(s)^{-1}
\left[X_{s}\left(f^{2}\right)-X_{s}^{2}\left(f\right)\right]{\rm d}s.
\end{equation}
The law of $X$ is denoted by $\PP^\phi_\mu$.
Technically, we condition on $W_t(1)$ staying within $ \varepsilon $ of $\phi(t)$ up to $T$ 
and then let $\varepsilon \rightarrow 0$ and $ T\rightarrow \infty $.
Also, it does not matter whether the original Dawson-Watanabe process
is supercritical, critical or even subcritical as the resulting Fleming-Viot
after normalizing by $W_\cdot(1)$ and conditioning so that $W_\cdot(1)=\phi$
(that is, after considering the angular part) satisfies the same martingale problem.
If $\phi$ is increasing, $X$ can be considered as a Fleming-Viot process that gives the population distribution for growing populations.

Properly normalized supercritical superprocesses have recently 
been shown (see e.g.\ Wang \cite{MR2644866}, Kouritzin and Ren \cite{MR3131303}, Liu et. al. \cite{MR3010225},
Eckhoff et. al. \cite{MR3395469} as well as the more detailed review 
in Section 2.7 of \cite{MR3362353}) to have almost sure long-time scaling limits (often called strong laws of 
large numbers), generalizing the pioneering branching Markov process work of Watanabe \cite{MR0237007} and Asmussen and Hering \cite{MR0420889}. 
Traditionally, superprocesses with ergodic and transient motion models have
been handled separately with different scalings in laws of large number results.
However, while considering strong laws of large numbers for supercritical, (possibly) non-Markov Gaussian branching 
processes,  Kouritzin et. al. \cite{KOURITZIN2018} showed that these two cases can be 
considered together. For $\alpha$-stable Dawson-Watanabe processes, the result of \cite{MR3131303} states that with probability one, as $t\to\infty$,
\begin{equation*}
	t^{\frac d \alpha}\frac{W_t}{W_t(1)}\hookrightarrow \frac1{(2 \pi)^{d}}\int_{\RR^d}e^{-|\theta|^\alpha}{\rm d} \theta\, \uplambda_d\,\ \mbox{ on } \left\{\lim_{t\rightarrow\infty}e^{-\beta t}W_t(1)>0\right\},
\end{equation*}
where $\hookrightarrow$ denotes shallow convergence of measures and $\uplambda_d$ is the Lebesgue measure on $\RR^d$.
Here and in the sequel, we ease notation by reducing $\uplambda_d({\rm d}\theta)$ to just ${\rm d}\theta$ when appropriate. 
\emph{Shallow} convergence is stronger than vague convergence yet
still allows convergence to non-finite measures like Lebesgue measure.
It is defined in \cite{MR3131303} as
$$\nu_t\hookrightarrow \nu\quad \Longleftrightarrow\quad \nu_t(f)\rightarrow  \nu\left(f\right),\ \forall f\ :\  \sup_{x\in\mathbb R^d}\left|e^{\epsilon \left|x\right|^{2}}f\left(x\right)\right|<\infty 
\text{ for some } \epsilon>0.$$

\subsection{Statement of Main Results}
For notational simplicity, we will simply call an $\alpha$-stable Fleming-Viot
process with with sampling rate $1/\phi(t)$ an $(\alpha,\phi)$-FV process.
As explained previously, $(\alpha,\phi)$-FV processes corresponds to $W_t/W_t(1)$ conditioned so that $W_t(1)=\phi(t)$ for all $t$. For supercritical Dawson-Watanabe processes, the total mass $W_t(1)$ has expected mean $m(1)e^{\beta t}$ for some $\beta>0$. 
This suggests that if $\phi(t)=e^{\beta t}$, then we have the almost-sure, shallow-topology, long-time limit
\begin{equation}\label{lim:Xvague}
	t^{\frac d \alpha}X_t \xhookrightarrow{t\to\infty} \frac1{(2 \pi)^{d}}\int_{\RR^d}e^{-|\theta|^\alpha}{\rm d} \theta\,\uplambda_d\,.
\end{equation}
In fact, our first main result shows the above almost sure limit for a larger class of sampling functions $\phi$.
\begin{theorem}\label{thm:X0vague}
Assume that $\mu$ satisfies
\begin{equation}\label{con.mu}
	\int_{\RR^d}|x|^{a}\mu({\rm d}x)<\infty \quad\textrm{for some}\quad a>0\,
\end{equation} 
and $\phi$ is a positive function on $\RR_+$ such that 
\begin{equation}\label{con:phi0X}
	\int_0^\infty s^{\frac{d}\alpha+1+ \varepsilon_0}\frac{{\rm d}s}{\phi(s)}<\infty \quad\textrm{for some}\quad \varepsilon_0>0\,.
\end{equation} 
Then, with $\PP^\phi_\mu$-probability one, the shallow limit \eqref{lim:Xvague} holds.
\end{theorem}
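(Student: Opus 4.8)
The plan is to combine the Green's‑function (mild) form of the martingale problem \eqref{FVMP}--\eqref{FVQV} with a Borel--Cantelli plus gap‑filling scheme, the decisive point being that the Fleming--Viot martingale measure annihilates constants. Write $(P_t)_{t\ge0}$ for the $\alpha$‑stable semigroup generated by $-(-\Delta)^{\alpha/2}$, with kernel $p_t(x-y)$; recall $\widehat{p_t}(\theta)=\e^{-t|\theta|^\alpha}$, $p_t(z)=t^{-d/\alpha}p_1(t^{-1/\alpha}z)$, $p_1$ is Schwartz (hence globally Lipschitz) and $p_1(0)=(2\pi)^{-d}\int_{\RR^d}\e^{-|\theta|^\alpha}\,\d\theta=:c_\star$. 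Because shallow convergence is an almost sure statement over a family of test functions that is exhausted by norm balls, a standard approximation reduces matters to proving $t^{d/\alpha}X_t(f)\to c_\star\int_{\RR^d}f$ $\PP^\phi_\mu$‑a.s.\ for each $f$ in a fixed countable convergence‑determining subfamily of $C_c^\infty(\RR^d)$; the upgrade to general admissible $f$ is then effected through the uniform‑in‑$t$ tail bound $\EE[t^{d/\alpha}X_t(h)]=t^{d/\alpha}\mu(P_t h)\le c_\star'\int_{\{|y|>k\}}\e^{-\epsilon|y|^2}\,\d y$ for $h=\e^{-\epsilon|\cdot|^2}\1_{\{|\cdot|>k\}}$, which is small once $k$ is large, uniformly in $t$. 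Fix such an $f$ henceforth.

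The second preparatory step is the representation. Applying \eqref{FVMP}--\eqref{FVQV} to the space‑time test function $(s,x)\mapsto P_{t-s}f(x)$, which lies in the kernel of $\partial_s+(-(-\Delta)^{\alpha/2})$ since $\partial_s P_{t-s}f=-(-(-\Delta)^{\alpha/2})P_{t-s}f$, gives
\[
X_t(f)=\mu(P_t f)+\int_0^t\!\!\int_{\RR^d}(P_{t-s}f)(x)\,M^X(\d s,\d x)=:\mu(P_t f)+\cmm_t(f),
\]
where $M^X(\d s,\d x)$ is the orthogonal martingale measure of \eqref{FVQV}, with covariance measure $\eta\phi(s)^{-1}\bigl[X_s(\d x)\delta_x(\d y)-X_s(\d x)X_s(\d y)\bigr]\,\d s$. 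Two facts will be used repeatedly: $\EE[X_s(g)]=\mu(P_s g)$ for $g\ge0$, and, since $X_s(1)\equiv1$ forces $M^X_\cdot(1)\equiv0$, the measure $M^X$ kills constants, so $\cmm_t(f)$ is unchanged if $P_{t-s}f$ is replaced by $P_{t-s}f-a(s)$ for any deterministic $a$. For the deterministic part, $t^{d/\alpha}\mu(P_t f)=\int\mu(\d x)\int p_1(t^{-1/\alpha}(x-y))f(y)\,\d y\to c_\star\int f$, with the quantitative rate $|t^{d/\alpha}\mu(P_t f)-c_\star\int f|\le C_f\, t^{-(a\wedge1)/\alpha}$ coming from $|p_1(z)-p_1(0)|\le C\min(1,|z|)$ and \eqref{con.mu}.

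The heart of the proof is the estimate $\EE[(t^{d/\alpha}\cmm_t(f))^2]\le C_f\, t^{-\delta}$ for some $\delta>0$. Starting from $\EE[\cmm_t(f)^2]\le\eta\int_0^t\phi(s)^{-1}\mu\bigl(P_s(P_{t-s}f-a(s))^2\bigr)\,\d s$, I would split the integral at $s=t/2$. For $s\le t/2$ take $a(s)=P_{t-s}f(0)$: because $P_{t-s}f$ is essentially constant on the scale $s^{1/\alpha}$ carrying the mass of $X_s$, Lipschitz continuity of $p_1$ gives $|P_{t-s}f(x)-P_{t-s}f(0)|\le C_f(t-s)^{-d/\alpha-\gamma/\alpha}|x|^\gamma$ for any $\gamma\in(0,\min(1,a,\alpha))$, and then, via $\EE[X_s(|\cdot|^\gamma)]=\mu(P_s|\cdot|^\gamma)\le C(1+s^{\gamma/\alpha})$ — here the moment hypothesis \eqref{con.mu} is indispensable, together with $\gamma<\alpha$ for finiteness of the $\alpha$‑stable moment — the $s\le t/2$ contribution is $\le C_f\, t^{-2d/\alpha-\gamma/\alpha}\int_0^\infty\phi(s)^{-1}(1+s^{\gamma/\alpha})\,\d s$, the integral being finite by \eqref{con:phi0X} since $\gamma/\alpha<1\le d/\alpha+1+\varepsilon_0$. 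For $s>t/2$ take $a\equiv0$ and bound crudely, $\mu(P_s(P_{t-s}f)^2)\le\|P_{t-s}f\|_\infty\,\mu(P_t|f|)\le C_f\, t^{-d/\alpha}$, so this contribution is $\le C_f\, t^{-d/\alpha}\int_{t/2}^t\phi(s)^{-1}\,\d s\le C_f\, t^{-2d/\alpha-1-\varepsilon_0}$, the last step being exactly where the ``$+1+\varepsilon_0$'' in \eqref{con:phi0X} enters (it yields $\int_{t/2}^t\phi(s)^{-1}\,\d s\le C t^{-d/\alpha-1-\varepsilon_0}$). Hence $\EE[(t^{d/\alpha}\cmm_t(f))^2]\le C_f\, t^{-\delta}$ with $\delta=\gamma/\alpha>0$.

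It then remains to upgrade this to almost sure convergence. Along a sufficiently fast subsequence, e.g.\ $t_n=n^{2/\delta}$, Chebyshev and Borel--Cantelli give $t_n^{d/\alpha}\cmm_{t_n}(f)\to0$ a.s., hence $t_n^{d/\alpha}X_{t_n}(f)\to c_\star\int f$ a.s.\ by the preceding steps. To fill the gaps $t\in[t_n,t_{n+1}]$ I would restart the representation at $t_n$, $X_t(f)=X_{t_n}(P_{t-t_n}f)+\int_{t_n}^t\!\int(P_{t-s}f)(x)M^X(\d s,\d x)$, control the new martingale increment by Doob's $L^2$‑inequality — its scaled second moment is dominated by $t^{d/\alpha}\int_{t_n}^\infty\phi(s)^{-1}\,\d s$‑type quantities, summable in $n$ again by \eqref{con:phi0X} — and show $t^{d/\alpha}X_{t_n}(P_{t-t_n}f)-t_n^{d/\alpha}X_{t_n}(f)\to0$ uniformly on $[t_n,t_{n+1}]$ by reusing the centering estimate and the fact that the rescaled kernels $t^{d/\alpha}p_{t-s}$ are nearly $t$‑independent for small $s$; intersecting the full‑probability events over the countable family and invoking the tail bound of the first step delivers \eqref{lim:Xvague}. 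I expect the main obstacle to be precisely the second‑moment estimate and, inside it, the localized centering $P_{t-s}f\rightsquigarrow P_{t-s}f-P_{t-s}f(0)$: without it one only obtains $\EE[(t^{d/\alpha}\cmm_t(f))^2]=O(1)$ — the fluctuations of $X_t(f)$ are genuinely of the same order $t^{-d/\alpha}$ as $X_t(f)$ itself — so no bare second‑moment argument can identify the limit. The price of the centering is \eqref{con.mu}, while the complementary regime $s>t/2$ is paid for by the decay rate of $1/\phi$ encoded in \eqref{con:phi0X}; a secondary, purely technical burden is the gap‑filling together with the reduction from the countable family to genuine shallow convergence.
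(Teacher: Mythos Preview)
Your centering idea---replacing $P_{t-s}f$ by $P_{t-s}f-P_{t-s}f(0)$ using that $M^X$ kills constants---is correct and insightful, and your second-moment bound $\EE[(t^{d/\alpha}\cmm_t(f))^2]\le C_f\,t^{-\gamma/\alpha}$ holds (with $\gamma<\min(1,a,\alpha)$). The problem is that this rate is too slow to close the argument. Since $\gamma/\alpha<1$, Borel--Cantelli forces a fast subsequence such as $t_n=n^{2\alpha/\gamma}$, and then $t_{n+1}-t_n\to\infty$. Your gap-filling then breaks: the ``new martingale increment'' $\int_{t_n}^t\!\int P_{t-s}f\,\d M^X$ is \emph{not} a martingale in $t$ (the integrand depends on $t$), so Doob does not apply; the standard remedy---replacing $P_{t-s}f$ by $P_{t_{n+1}-s}f$---introduces an error $\|P_{t-s}f-P_{t_{n+1}-s}f\|_\infty\le C(t_{n+1}-t)$ which, after scaling by $t^{d/\alpha}$, diverges. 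Likewise, the term $t^{d/\alpha}X_{t_n}(P_{t-t_n}f)-t_n^{d/\alpha}X_{t_n}(f)$ cannot be controlled by a continuity argument when $t-t_n$ is unbounded. Conversely, if you switch to slow $t_n=n^\delta$ with $\delta<1$ so that gaps vanish, then $\sum_n t_n^{-\gamma/\alpha}$ diverges and Borel--Cantelli fails. There is no choice of $t_n$ that makes both steps work with your bound.

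The paper resolves this via a two-time-scale decomposition that you are missing: one writes $X_t(f)\approx X_{\rho(t)}(T_{t-\rho(t)}f)$ for a sublinear $\rho(t)=t^\kappa$. The error $X_t(f)-X_{\rho(t)}(T_{t-\rho(t)}f)=\int_{\rho(t)}^t\!\int T_{t-s}f\,\d M^X$ has scaled second moment $\lesssim t^{d/\alpha}\int_{\rho(t)}^t\phi(s)^{-1}\,\d s\lesssim t^{d/\alpha-\kappa(d/\alpha+1+\varepsilon_0)}$, which for $\kappa$ close to $1$ is nearly $t^{-1-\varepsilon_0}$---summable along $t_n=n^\delta$ with $\delta$ just below $(d/\alpha+1)^{-1}$, so gaps shrink and continuity handles the interpolation. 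The remaining piece $X_{\rho(t)}(T_{t-\rho(t)}f)$ is then analyzed by replacing $T_{t-\rho(t)}f$ with its leading kernel $p_{t-\rho(t)}\!\int f$ and computing $X_{\rho(t)}(p_{t-\rho(t)})$ via Fourier; here the martingale runs only to time $\rho(t)\ll t$, which is exactly what produces the needed extra decay (and is where \eqref{con.mu} enters, in the same spirit as your centering). Your $s\le t/2$ versus $s>t/2$ split is morally a coarse version of this, but the cutoff must be at a \emph{sublinear} scale, not at $t/2$, for the bookkeeping to balance.
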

In addition, if a test function $f\in C^2_c(\RR^d)$ is fixed, all higher order asymptotics of $X_t(f)$ can be identified.  
To state our second main result, we prepare some notation. 
For each multi-index $k=(k_1,\dots,k_d)\in\NN^d$ and $x=(x_1,x_2,\dots,x_d)\in\RR^d$, we let
	\begin{gather*}
		|k|=k_1+k_2+\cdots+k_d\,,\quad k!=k_1!k_2!\cdots k_d!\,,
		\quad x^k=x_1^{k_1}x_2^{k_2}\cdots x_d^{k_d}\,,
	\end{gather*}
	and define the constant $\vartheta^k_{d,\alpha}$ and the $\sigma$-finite signed measure $\uplambda_d^k$ on $\RR^d$ respectively by
	\begin{equation}\label{def:constTheta}
		\vartheta^k_{d,\alpha}=\frac1{(2 \pi)^d}\int_{\RR^d}e^{-|\theta|^\alpha}\theta^k {\rm d} \theta
		\quad\textrm{and}\quad
		\uplambda_d^k({\rm d}y)=\frac1{k!}y^k{\rm d}y \,.
	\end{equation}
Obviously, $\uplambda^0_d$ is the Lebesgue measure $\uplambda_d$. 
\begin{theorem} \label{thm:stbSLLn}	
Let $N$ be a non-negative integer.
Assume that $\mu$ satisfies  \eqref{con.mu} and $\phi$ satisfies 
\begin{equation}\label{con:phiX}
	\int_0^\infty s^{\frac{2N+d}\alpha+1+ \varepsilon_0}\frac{{\rm d}s}{\phi(s)}<\infty \quad\textrm{for some}\quad \varepsilon_0>0\,.
\end{equation} 
Let $f$ be a function in $b\cee(\RR^d)\cap L^2(\RR^d)$ satisfying 
\begin{equation}\label{con:fxN}
	\int_{\RR^d}|f(x)||x|^{N}{\rm d}x<\infty
\end{equation} 
with its Fourier transform $\hat f$ satisfying
\begin{equation}\label{con:hatf}
	\int_{\RR^d}|\hat f(\xi)||\xi|^\alpha {\rm d} \xi<\infty\,.
\end{equation}
Then, $\PP^\phi_{\mu}$-almost surely
\begin{equation}\label{stb1}
	\lim_{t\to\infty}t^{\frac {N+d} \alpha}\lt|X_t(f)-\sum_{\substack{k\in\NN^d:|k|\le N\\|k|\textrm{ is even}}} \frac{(-1)^{\frac{|k|}2}t^{-\frac{d+|k|}\alpha}}{(2 \pi)^dk!}\int_{\RR^d}f(y)y^{ k}{\rm d}y\int_{\RR^d}e^{-|\theta|^\alpha}\theta^{ k}{\rm d} \theta\rt|=0\,.
\end{equation}
Written another way, we have $\PP^\phi_{\mu}$-almost surely
	\begin{equation}\label{stb2}
	 	t^{\frac d \alpha} X_t(f)=\sum_{\substack{k\in\NN^d:|k|\le N\\|k|\textrm{ is even}}} (-1)^{\frac{|k|}2}t^{-\frac{|k|}\alpha} \vartheta^k_{d,\alpha} \uplambda^k_d(f)+o(t^{-\frac{N}\alpha}) \quad\mbox{as}\quad t\to\infty\,.
	 \end{equation}
\end{theorem}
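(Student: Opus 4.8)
The plan is to analyze $X_t(f)$ through the martingale problem \eqref{FVMP}–\eqref{FVQV}, separating a deterministic "semigroup" part from a fluctuation part controlled by the quadratic variation. Write $T_t = e^{-t(-\Delta)^{\alpha/2}}$ for the $\alpha$-stable semigroup. Applying the martingale problem to the time-dependent test function $s\mapsto T_{t-s}f$ and integrating by parts (Dynkin's formula for the inhomogeneous problem), one obtains the mild form
\begin{equation*}
	X_t(f) = \mu(T_t f) + \int_0^t \d M^X_s(T_{t-s} f)\,.
\end{equation*}
The first step is to expand the deterministic term $\mu(T_t f)$. Since $\widehat{T_t f}(\xi) = e^{-t|\xi|^\alpha}\hat f(\xi)$, writing $T_t f(x) = (2\pi)^{-d}\int e^{ix\cdot\xi} e^{-t|\xi|^\alpha}\hat f(\xi)\,\d\xi$ and substituting $\xi = t^{-1/\alpha}\theta$ gives $T_t f(x) = (2\pi)^{-d} t^{-d/\alpha}\int e^{i t^{-1/\alpha} x\cdot\theta} e^{-|\theta|^\alpha}\hat f(t^{-1/\alpha}\theta)\,\d\theta$. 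Taylor-expanding $e^{i t^{-1/\alpha} x\cdot\theta}\hat f(t^{-1/\alpha}\theta)$ — equivalently expanding $\hat f$ at $0$, using $\int |f(x)||x|^N\d x < \infty$ (so $\hat f \in C^N$) and the moment condition \eqref{con.mu} on $\mu$ — produces the finite sum in \eqref{stb1}: the even-$|k|$ terms survive because $\int e^{-|\theta|^\alpha}\theta^k\d\theta = 0$ when some $k_i$ is odd (odd symmetry of the radially symmetric density), contributing $(-1)^{|k|/2}t^{-(d+|k|)/\alpha}(2\pi)^{-d}k!^{-1}\int f(y)y^k\d y \int e^{-|\theta|^\alpha}\theta^k\d\theta$, while the remainder is $o(t^{-(N+d)/\alpha})$ uniformly, with the $|\xi|^\alpha$-moment condition \eqref{con:hatf} on $\hat f$ controlling the error coming from the $e^{-|\theta|^\alpha}$ factor near the edge of the expansion.

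The second, and main, step is to show the stochastic term is $o(t^{-(N+d)/\alpha})$ almost surely. By \eqref{FVQV} the quadratic variation of $\int_0^t \d M^X_s(T_{t-s}f)$ is $\int_0^t \eta\,\phi(s)^{-1}[X_s((T_{t-s}f)^2) - X_s(T_{t-s}f)^2]\,\d s \le \int_0^t \eta\,\phi(s)^{-1} X_s((T_{t-s}f)^2)\,\d s$. Taking expectations and using that $\EE^\phi_\mu X_s(g) = \mu(T_s g)$ (from \eqref{FVMP}), the second moment of the fluctuation is bounded by $\eta\int_0^t \phi(s)^{-1}\mu(T_s[(T_{t-s}f)^2])\,\d s$. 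Now $\|T_{t-s}f\|_\infty \le (2\pi)^{-d}(t-s)^{-d/\alpha}\|\hat f\|_1$ for $s \le t/2$, and for $s$ near $t$ one uses instead $\|T_{t-s}f\|_\infty \le \|f\|_\infty$ together with $\|T_{t-s}f\|_{L^2} \le \|f\|_{L^2}$ and ultracontractivity $\|T_\sigma g\|_\infty \lesssim \sigma^{-d/(2\alpha)}\|g\|_{L^1}$ — but more cleanly, one just needs $\mu(T_s[(T_{t-s}f)^2]) \le \|(T_{t-s}f)^2\|_\infty \le \|T_{t-s}f\|_\infty^2$ and a decay estimate for $\|T_{t-s}f\|_\infty$ valid for all $s < t$ under \eqref{con:fxN}. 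Feeding these bounds into the integral and using the integrability hypothesis \eqref{con:phiX} on $\phi$ — which is precisely calibrated so that $\int_0^\infty s^{(2N+d)/\alpha + 1 + \varepsilon_0}\phi(s)^{-1}\d s < \infty$ — yields $\EE^\phi_\mu\bigl(\int_0^t \d M^X_s(T_{t-s}f)\bigr)^2 \lesssim t^{-2(N+d)/\alpha - \varepsilon_0'}$ for some $\varepsilon_0' > 0$. Finally, one upgrades this $L^2$ decay to an almost-sure statement by a Borel–Cantelli argument along a discrete sequence $t_n = n^\rho$ (choosing $\rho$ so the $L^2$ bounds are summable after multiplication by $t_n^{2(N+d)/\alpha}$), then controls the oscillation of $t\mapsto t^{(N+d)/\alpha}\int_0^t \d M^X_s(T_{t-s}f)$ between consecutive $t_n$ via Doob's maximal inequality applied to the martingale $u \mapsto \int_0^u \d M^X_s(T_{t_{n+1}-s}f)$, exactly as in the chaining arguments of \cite{MR3131303} and \cite{KOURITZIN2018}.

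The main obstacle is the second step: unlike the leading-order Theorem \ref{thm:X0vague}, here we must extract $N+1$ orders of decay from the fluctuation term, and the naive bound $X_s((T_{t-s}f)^2) \le \|T_{t-s}f\|_\infty^2$ loses too much when $s$ is close to $t$, where $T_{t-s}f \to f$. The resolution is to split the time integral at, say, $s = t - 1$: on $[0, t-1]$ the $(t-s)^{-d/\alpha}$ supremum decay combines with $\phi$-integrability to give the required rate, while on $[t-1, t]$ one exploits instead the $L^2$-boundedness of $T_{t-s}f$ together with $\phi(s)^{-1} \le \phi(t-1)^{-1} = o(t^{-2(N+d)/\alpha})$ (a direct consequence of \eqref{con:phiX}, since an integrable $s^{(2N+d)/\alpha+1+\varepsilon_0}/\phi(s)$ forces $1/\phi(s) = o(s^{-(2N+d)/\alpha - 2 - \varepsilon_0})$), so the short-time piece is comfortably negligible. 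A secondary technical point is making the Taylor remainder in the deterministic expansion genuinely $o$ rather than $O$ of the next order; this requires dominated convergence against the integrable envelopes furnished by \eqref{con.mu}, \eqref{con:fxN} and \eqref{con:hatf}, and is where those three moment conditions are each used.
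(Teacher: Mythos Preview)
There is a genuine gap in the second step. The claimed bound $\EE^\phi_\mu\bigl(\int_0^t \d M^X_s(T_{t-s}f)\bigr)^2 \lesssim t^{-2(N+d)/\alpha - \varepsilon_0'}$ is not obtainable from your estimates. Bounding $\mu(T_s[(T_{t-s}f)^2])$ by either $\|T_{t-s}f\|_\infty^2 \lesssim (t-s)^{-2d/\alpha}$ or by $\|T_t(f^2)\|_\infty \lesssim t^{-d/\alpha}$, the contribution from $s\in[0,t/2]$ is at best of order $t^{-2d/\alpha}\int_0^{t/2}\phi(s)^{-1}\d s$, which is of exact order $t^{-2d/\alpha}$ since $\int_0^\infty\phi^{-1}<\infty$ but is no smaller. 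Hence $t^{2(N+d)/\alpha}$ times this second moment is $\gtrsim t^{2N/\alpha}$, and the Borel--Cantelli sum along any $t_n\to\infty$ diverges even for $N=0$. Your split at $t-1$ does not help: the obstacle is the \emph{small}-$s$ region, where $\phi(s)^{-1}$ carries no decay. (The pointwise claim $\phi(t-1)^{-1}=o(t^{-\gamma})$ also does not follow from integrability of $s^\gamma/\phi(s)$ without monotonicity of $\phi$, which is not assumed.)

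There is a related gap in the first step: $\mu(T_tf)$ does \emph{not} expand to the sum in \eqref{stb1} plus $o(t^{-(N+d)/\alpha})$ under \eqref{con.mu}. After substituting $\xi=t^{-1/\alpha}\theta$ and expanding $\hat f$ one obtains terms $\frac{\partial^k\hat f(0)}{k!}\,t^{(N-|k|)/\alpha}\int\hat\mu(-t^{-1/\alpha}\theta)e^{-|\theta|^\alpha}\theta^k\d\theta$; the error in replacing $\hat\mu(-t^{-1/\alpha}\theta)$ by $1$ is only $O(t^{-a/\alpha})$ under \eqref{con.mu}, which the prefactor $t^{(N-|k|)/\alpha}$ blows up whenever $|k|<N-a$. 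The limit in the theorem is not the expansion of $\mu(T_tf)$ alone---the martingale contribution from small $s$ is essential in washing out the initial measure $\mu$.

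The paper resolves both issues simultaneously by introducing an intermediate time $\rho(t)=t^\kappa$ and comparing $X_t(f)$ with the \emph{random} quantity $X_{\rho(t_n)}(L_{t_n-\rho(t_n)}f)$, where $L_tf=\sum_{|k|\le N}\frac{(-1)^{|k|}}{k!}\int f(y)y^k\d y\,\partial^kp_t$ (Proposition~\ref{thm:SLLN}). The stochastic increment is now only over $[\rho(t_n),t_n]$, so its second moment is $\lesssim t_n^{-d/\alpha}\int_{\rho(t_n)}^{t_n}\phi^{-1}$, and the large lower limit lets \eqref{con:phiX} supply summable decay. The remaining piece $X_{\rho(t)}(\partial^kp_{t-\rho(t)})$ is then analyzed mode-by-mode in Fourier via $X_{\rho(t)}(\e_\theta)$ (Proposition~\ref{prop:Xp}): the key point is that $\PP^\phi_\mu|M^X_{\rho(t)}(\e_\theta)|^2\lesssim|\theta|^{2\wedge a}+\rho(t)|\theta|^\alpha$ (this is where \eqref{con.mu} enters), which vanishes as $\theta\to0$ and, weighted by $e^{-(t-\rho(t))|\theta|^\alpha}$ in the Fourier integral, yields the extra powers of $t^{-1/\alpha}$ that your direct second-moment bound cannot produce.
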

Theorem \ref{thm:stbSLLn} gives flexible
rate-of-convergence information on the convergence of $t^{\frac d \alpha} X_t$
to scaled Lebesgue measure, depending upon the conditions assumed and the
test function used. 
Similar results for Dawson-Watanabe processes have been obtained in \cite{Kle1}.

The long-time behaviour of constant rate Fleming-Viot processes has been well studied.	
When $\phi\equiv1$ and $\alpha=2$, the long-time behavior of $X_t$ is discussed in Dawson and Hochberg \cite{MR659528}. They show that as time gets large, the measure-valued process $(X_t,t\ge0)$ concentrates within a random (but stationary) distance from a Brownian motion. 
The possible long-time distributional limits of even multiple (critical) interacting Fleming-Viot processes are 
well-known. 
For example, the stationary distributions were obtained in Shiga \cites{MR582165,MR592356},
Shiga and Uchiyama \cite{MR849066}
for the two allele case and in Dawson et. al. \cite{MR1297523}, Dawson and Greven \cite{MR1670873} for the general case. 
These results characterize the possible distributional limits of interacting
Fleming-Viot processes.
However, Fleming-Viot processes that admit almost sure scaling limits do not appear to have been
considered. 

The proofs of Theorems \ref{thm:X0vague} and \ref{thm:stbSLLn} are presented in Subsection \ref{sub:limit_theorems_for_super_stable_processes}.
Our method, discussed in Subsection \ref{sub:long_term_asymptotics},  improves Asmussen and Hering's technique in \cite{MR0420889} and converts it to the language of martingales and stochastic integration. 
This formulation provides a clear picture, which may be applicable for superprocesses of both Dawson-Watanabe type and Fleming-Viot type with general Feller motions or mutation processes. 

To illustrate our method further, we consider long-time limits 
of the occupation time $Y_t=\int_0^tX_s\, {\rm d}s$ and the
inhabitation time, defined for bounded $f$ as 
$Z_t(f) = \mathbb X_t(\ell_f)$. 
Here, 
\begin{equation}\label{ellf:eqn}
\ell_f(r,y)=\int_0^r f(y_s){\rm d}s\quad\forall r\ge 0,\ y\in D(\RR^d),\ f\in b\cee(\RR^d),	
\end{equation}
and $\mathbb X$ is the $(\alpha,\phi)$ Fleming-Viot historical process satisfying the martingale problem:
\begin{equation}\label{eqn:histXa}
	\MM_t(\ell)=\XX_t(\ell)-\delta_0\times \mu^*(\ell)-\int_0^t\XX_s(\AA \ell){\rm d}s
\end{equation}
is a continuous martingale starting at $0$ such that
\begin{equation}\label{eqn:histMa}
\langle\MM(\ell)\rangle_t=\int_0^t(\XX_s(\ell^2)-\XX_s(\ell)^2)\frac{{\rm d}s}{\phi(s)}
\end{equation}
for all $\ell$ in the domain of bounded functions $\mathcal D(\AA)$ for the historical generator $\AA$.
(We define $\mathbb X$ precisely and relate it to our $(\alpha,\phi)$-FV process below.
$\mu^*$ will be a variant of $\mu$, defined on the historical path space.)
However, $\ell_f$ with $f=1_{\coo}$ (for an open $\coo$) is not bounded, hence 
$\ell_{1_{\coo}}\notin\mathcal D(\AA)$.
Still, the martingale problem (\ref{eqn:histXa},\ref{eqn:histMa}) does hold for such \emph{natural} $\ell=\ell_f$ since
\begin{equation}\label{Aforellf}
\mathbb A\ell_f=\bplim_{t\rightarrow\infty} \mathbb A\ell_f^t
\end{equation} 
exists and
\begin{equation}\label{XMforellf}
\mathbb X_t(\ell_f)=\mathbb X_t(\ell_f^u),\quad \mathbb X_t(\AA\ell_f)=\mathbb X_t(\AA\ell_f^u)\quad \mbox{ and }\quad \langle\mathbb M(\ell_f)\rangle_t=\langle\mathbb M(\ell_f^u)\rangle_t \quad\forall u\ge t,
\end{equation} 
where $\ell_f^u$ is the bounded variant of $\ell_f$. Namely,
\begin{equation*}
	\ell^u_f(r,y)=\int_0^{r\wedge u}f(y_s){\rm d}s \quad\forall r\ge 0,\ y\in D(\RR^d),\ f\in b\cee(\RR^d),
\end{equation*}
for each fixed $u$ that we show is in $\mathcal D(\AA)$.
(Herein, $\bplim$ denotes the bounded, pointwise limit.)
The definition of $\AA\ell_f$ through the limit \eqref{Aforellf} is
established in Lemma \ref{Nov7_18a} (to follow).
To show (\ref{XMforellf}), we let $y^t=y(\cdot\wedge t)\in D(\RR^d)$ for $t>0$ and $y\in D(\RR^d)$,
consider $\mathbb E=\{(r,y^r):r\ge0,y\in D(\RR^d)\}$ as a topological subspace 
of $\RR_+\times D(\RR^d)$, and show that $\mathbb X_t$ is supported on 
\begin{equation}\label{Def:Et}
\mathbb E^t=\{(r,y)\in \mathbb E:r= t\},
\end{equation}
which we do below (see Proposition \ref{prop:suppXX} and Remark \ref{rmk:supp}). 
\begin{remark}
	The occupation time $Y_t(1_{\coo})$ counts the time in $\coo$ of dead lineages i.e.\
	times of particles that are not ancestors of living particles,
	while the inhabitation time $Z_t(1_{\coo})$ counts the time of common ancestors 
	multiple times.
\end{remark}
Our third main result, connects these two time processes. 
\begin{theorem}\label{Main3}
Defining $\ell_f\in\mathcal D(\AA)$ as in (\ref{ellf:eqn}) and for all $t>0$, $f\in b\cee(\RR^d)$,
\begin{equation}\label{def:gamma}
\gamma_d(t)=\lt\{
\begin{array}{ll}
	t^{1-\frac d \alpha} & \textrm{ if }d<\alpha
	\\\ln (t\vee1)& \textrm{ if }d=\alpha
	\\1& \textrm{ if }d>\alpha
\end{array}
	\rt.
	\quad 
	\mathrm{and} 
	\quad
	\cnn_d(f) =\lt\{
	\begin{array}{ll}
		\|f\|_{L^1(\RR^d)} & \textrm{ if }d<\alpha
		\\\|f\|_{L^1(\RR^d)}+\int_{\RR^d}|\hat f(\theta)||\theta|^{-\alpha}{\rm d} \theta & \textrm{ if }d=\alpha
		\\\int_{\RR^d}|\hat f(\theta)||\theta|^{-\alpha}{\rm d} \theta & \textrm{ if }d>\alpha
	\end{array}
	\rt..
\end{equation}
One has that:
\begin{description}
\item[a] $\mathbb M_t(\ell_f)=Z_t(f)- Y_t(f)$ is a $\mathcal F^X_{t+}$-martingale for $f\in b\cee(\RR^d)$. 
\vspace*{0.1cm}
\item[b] $\lim_{t\to\infty}\mathbb M_t(\ell_f)$ exists if $\mathcal N_d(f)<\infty$ and
$\int_0^\infty \gamma_d^2(s)\frac{\d s}{\phi(s)}<\infty$.
\vspace*{0.1cm}
\item[c] $Z_\infty(f), Y_\infty(f)$ both exist in $\RR$ and 
	$\mathbb M_\infty(\ell_f)=Z_\infty(f)- Y_\infty(f)$ a.s. if $\mathbb M_\infty(\ell_f)$ exists, $d>\alpha$ and $\mathcal N_d(f)<\infty$.
\end{description}
\end{theorem}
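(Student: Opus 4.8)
The plan is to establish the three assertions (a)--(c) in turn.

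\emph{Part (a).} The first task is to compute $\AA\ell_f$. Since $\ell^u_f$ lies in $\mathcal D(\AA)$ and its dependence on the current position $w_r$ is trivial (perturbing the path at the single time $r$ does not change $\int_0^{r\wedge u}f(w_s)\,\d s$), the $\alpha$-stable part of the historical generator annihilates it, so $\AA\ell^u_f(r,w)=\partial_r\ell^u_f(r,w)=f(w_r)\1_{\{r<u\}}$; letting $u\to\infty$ in \eqref{Aforellf} yields $\AA\ell_f(r,w)=f(w_r)$ (this identifies the limit whose existence is Lemma \ref{Nov7_18a}). Now $\delta_0\times\mu^*(\ell_f)=0$ since $\ell_f(0,\cdot)\equiv0$, and, because $\XX_s$ is carried by $\mathbb E^s$ (Proposition \ref{prop:suppXX}, Remark \ref{rmk:supp}) and pushes forward under $w\mapsto w_s$ onto $X_s$, we get $\XX_s(\AA\ell_f)=\int f(w_s)\,\XX_s(\d w)=X_s(f)$. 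Substituting into \eqref{eqn:histXa} gives $\MM_t(\ell_f)=\XX_t(\ell_f)-\int_0^t X_s(f)\,\d s=Z_t(f)-Y_t(f)$; this is a continuous $\mathcal F^X_{t+}$-martingale by \eqref{eqn:histXa}--\eqref{eqn:histMa} together with the adaptedness of $\XX$ coming from its construction from $X$.

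\emph{Part (b).} Since $\MM(\ell_f)$ is a martingale with $\EE[\MM_t(\ell_f)^2]=\EE\langle\MM(\ell_f)\rangle_t$, it suffices to bound $\EE\langle\MM(\ell_f)\rangle_\infty$ and apply $L^2$-martingale convergence. By \eqref{eqn:histMa} and \eqref{XMforellf}, $\EE\langle\MM(\ell_f)\rangle_\infty\le\int_0^\infty\phi(s)^{-1}\EE[\XX_s(\ell^2_f)]\,\d s$. The first-moment (``many-to-one'') formula for the historical $(\alpha,\phi)$-FV process --- whose tagged lineage under $\EE$ is an $\alpha$-stable process $\xi$ started from $\mu$, exactly as $\EE[X_t(f)]=\mu(P_tf)$ (with $P_t=\e^{-t(-\Delta)^{\alpha/2}}$) in the non-historical case --- identifies $\EE[\XX_s(\ell^2_f)]=\EE_\mu[(\int_0^s f(\xi_r)\,\d r)^2]$. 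The key input is the occupation-time second-moment bound
\[
\EE_\mu\Big[\Big(\int_0^s f(\xi_r)\,\d r\Big)^2\Big]\le C\,\cnn_d(f)^2\,\gamma_d(s)^2 ,
\]
which is precisely what the definitions of $\cnn_d$ and $\gamma_d$ in \eqref{def:gamma} are built to record. I would prove it from Kac's moment identity, which gives $\EE_\mu[(\int_0^sf(\xi_r)\,\d r)^2]\le 2\|G_s|f|\|_\infty^2$ with $G_t=\int_0^t P_r\,\d r$, together with the $\alpha$-stable heat-kernel estimates $\|P_rf\|_\infty\le\min(\|f\|_\infty,C r^{-d/\alpha}\|f\|_{L^1})$ and $|P_rf(x)|\le(2\pi)^{-d}\int\e^{-r|\theta|^\alpha}|\hat f(\theta)|\,\d\theta$: integrating over $r$ produces the trichotomy $\gamma_d(s)\in\{s^{1-d/\alpha},\ \ln(s\vee1),\ 1\}$ and the norm $\cnn_d(f)$. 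Substituting back, $\EE\langle\MM(\ell_f)\rangle_\infty\le C\,\cnn_d(f)^2\int_0^\infty\gamma_d(s)^2\,\d s/\phi(s)<\infty$, so $\MM_t(\ell_f)$ converges a.s.\ and in $L^2$.

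\emph{Part (c).} When $d>\alpha$ the $\alpha$-stable motion is transient. For $f\ge0$, $t\mapsto Y_t(f)$ is nondecreasing, so $Y_\infty(f)\in[0,\infty]$ exists, and $\EE\,Y_\infty(f)=\int_0^\infty\mu(P_sf)\,\d s\le(2\pi)^{-d}\int_{\RR^d}|\hat f(\theta)||\theta|^{-\alpha}\,\d\theta=(2\pi)^{-d}\cnn_d(f)<\infty$ (the $\theta$-integral converges at the origin precisely because $d>\alpha$ and at infinity because $\cnn_d(f)<\infty$), whence $Y_\infty(f)<\infty$ a.s. A general $f$ is handled by splitting into $f^{\pm}$ --- the potentials $\mu(Gf^{\pm})$ being finite in the cases of interest, e.g.\ $f\in L^1(\RR^d)$, which covers the indicators $1_\coo$ motivating the theorem --- or equivalently by applying \eqref{FVMP} to the bounded function $Gf=\int_0^\infty P_rf\,\d r$ (which satisfies $\|Gf\|_\infty\le(2\pi)^{-d}\cnn_d(f)$ and $(-\Delta)^{\alpha/2}Gf=f$ in the bp-extended sense), giving $Y_t(f)=\mu(Gf)-X_t(Gf)+M^X_t(Gf)$, where $M^X_t(Gf)$ converges a.s.\ (its bracket is at most $\eta\|Gf\|_\infty^2\int_0^\infty\d s/\phi(s)$) and, for $f\ge0$, the nonnegativity and a.s.\ boundedness of $X_t(Gf)$ force $Y_t(f)$ to be a.s.\ bounded, hence convergent. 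In all cases $Y_\infty(f)\in\RR$ a.s.; since $\MM_\infty(\ell_f)$ exists by hypothesis and $Z_t(f)=Y_t(f)+\MM_t(\ell_f)$ by (a), we conclude $Z_\infty(f)=Y_\infty(f)+\MM_\infty(\ell_f)\in\RR$ and $\MM_\infty(\ell_f)=Z_\infty(f)-Y_\infty(f)$ a.s.

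\emph{Main obstacle.} The technical heart is the occupation-time second-moment estimate in (b): the three regimes $d<\alpha$, $d=\alpha$, $d>\alpha$ must each be controlled with the correct growth in $s$ and against the precise norm $\cnn_d$, with the borderline $d=\alpha$ (where $\gamma_d(s)=\ln(s\vee1)$) being the most delicate. The remaining points needing care --- the extended-domain memberships of $\ell_f$ (Lemma \ref{Nov7_18a}) and of $Gf$, and the $\mathcal F^X_{t+}$-adaptedness of $\XX_t(\ell_f)$ --- all rest on the construction of $\XX$ from $X$.
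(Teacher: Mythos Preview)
Your proposal is correct and follows essentially the same route as the paper. Part (a) matches Proposition~\ref{YZrelate}: compute $\AA\ell_f$ via Lemma~\ref{Nov7_18a}, use the support property of $\XX_s$ (Remark~\ref{rmk:supp}) and the projection \eqref{eqn:XXX} to identify $\XX_s(\AA\ell_f)=X_s(f)$. Part (b) matches Proposition~\ref{prop:martcorr}(i): bound $\EE\langle\MM(\ell_f)\rangle_\infty$ via the historical first-moment formula \eqref{eqn:hist1stmoment} and the occupation-time second-moment estimate, which in the paper is \eqref{est:T2up} (derived from \eqref{est:Tup}) and is exactly your Kac-type inequality $\EE_\mu[(\int_0^s f(\xi_r)\,\d r)^2]\le 2\|G_s|f|\|_\infty^2$. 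Part (c) matches the proof of Proposition~\ref{thm:occ}(ii): for $f\ge0$, monotonicity of $Y_t(f)$ plus a finite expectation via $\cnn_d(f)$, then combine with (a) and the hypothesis on $\MM_\infty(\ell_f)$. Your alternative route through $Gf$ and \eqref{FVMP} is not used in the paper but is a reasonable variant; note the paper, like you, tacitly reduces to $f\ge0$ without spelling out that $\cnn_d(f^\pm)$ need not be finite when $\cnn_d(f)$ is.

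One correction: you say the $\mathcal F^X_{t+}$-adaptedness of $\XX$ ``comes from its construction from $X$'', but the direction is reversed---$X$ is recovered from $\XX$ via the projection $\jmath$ (see \eqref{eqn:XXX}), not the other way around. In Proposition~\ref{YZrelate} the martingale property is stated for the historical filtration $\cgg_t$; the stronger $\mathcal F^X_{t+}$-claim in Theorem~\ref{Main3} is addressed by the remark immediately following it and is not a consequence of building $\XX$ from $X$. This does not affect the substance of your argument.
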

The proof of parts a, b, and c follow respectively from Proposition \ref{YZrelate}, 
Proposition \ref{prop:martcorr} (i), and the proof of the high dimensional case of Theorem
\ref{thm:occvague} as well as Proposition \ref{prop:martcorr}.

Notice that the martingale in Theorem \ref{Main3} is with respect to the right continuous 
filtration of the
the (non-historical) $(\alpha,\phi)$-FV process, which is possible when
the $(\alpha,\phi)$-FV process is defined from the $(\alpha,\phi)$-historical
process through (\ref{eqn:XXX}) below.

Hereafter, the cases $d<\alpha$, $d=\alpha$ and $d> \alpha$ are respectively called \textit{low dimension, critical dimension} and \textit{high dimension}.
One can see from Theorem \ref{Main3} that particle time behaviour is dimensionally
dependent as one might expect from the transition from recurrent to 
transient particle motion.

While in the context of Dawson-Watanabe processes, long-time asymptotics of occupation time processes have been studied extensively starting from the work of Iscoe \cite{MR814663}, the corresponding problem for Fleming-Viot processes seems sparse in the literature.
As is known in the context of Dawson-Watanabe processes, the limiting behavior of occupation times depends on the relation between $d$ and $\alpha$.
Our fourth main result shows the limiting behaviour of occupation and inhabitation times for 
$(\alpha,\phi)$-Fleming-Viot processes will also be dimensionally dependent.
Define
\begin{equation}\label{def.kap}	\renewcommand{\arraystretch}{1.3}
	\varkappa_d(\alpha)=\lt\{
	\begin{array}{ll}
		(2 \pi)^{-d} \frac{\alpha}{\alpha-d}\int_{\RR^d}e^{-|\theta|^\alpha} {\rm d} \theta & \textrm{ if }d<\alpha
		\\(2 \pi)^{-d}\int_{\RR^d}e^{-|\theta|^\alpha}{\rm d} \theta & \textrm{ if }d=\alpha
	\end{array}
	\rt.
	\,.
\end{equation}
\begin{theorem}\label{thm:occvague}
Suppose $\phi$ satisfies
\begin{equation}\label{con:occphi}
\int_1^{\infty}\frac{\d s}{\phi(s)} <\infty\,.
\end{equation} 
Then, in low and critical dimensions ($d\le \alpha$), with $\PP^\phi_\mu$-probability one, scaled occupation and inhabitation times $\frac{Y_t}{\gamma_d(t)}$ 
and $\frac{Z_t}{\gamma_d(t)}$ both converge shallowly to $\varkappa_d(\alpha)\uplambda_d$. 
While in high dimensions ($d>\alpha$), with $\PP^\phi_\mu$-probability one, $Y_t$ and $Z_t$ converge shallowly to some random measures. 
\end{theorem}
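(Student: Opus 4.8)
The plan is to obtain the occupation-time limits from the one-time-slice limits of Theorems~\ref{thm:X0vague}–\ref{thm:stbSLLn} combined with the martingale structure in~\eqref{eqn:histXa}–\eqref{eqn:histMa}, treating the low/critical dimensions and the high dimension separately. First I would fix a test function $f$ lying in the class appearing in Theorem~\ref{thm:stbSLLn} (so that $\hat f$ decays fast enough that $\cnn_d(f)<\infty$ and the relevant moment conditions hold), establish the result for such $f$, and then upgrade to shallow convergence by a density/approximation argument using that $\{e^{\epsilon|x|^2}f\}$ is bounded. The heart of the matter is the deterministic (mean) asymptotics: writing $S_t=e^{-t(-\Delta)^{\alpha/2}}$ for the stable semigroup, one has $\EE^\phi_\mu X_t(f)=\mu(S_t f)$, and a Fourier computation gives $\mu(S_t f)=\frac1{(2\pi)^d}\int \hat\mu(\xi)\hat f(-\xi)e^{-t|\xi|^\alpha}\,\d\xi$, whence as $t\to\infty$, $t^{d/\alpha}\mu(S_tf)\to\frac1{(2\pi)^d}\hat f(0)\int e^{-|\theta|^\alpha}\,\d\theta=(2\pi)^{-d}\|\hat\cdot\| \cdots$, i.e.\ $\sim (2\pi)^{-d}\big(\int e^{-|\theta|^\alpha}\d\theta\big)\uplambda_d(f)\,t^{-d/\alpha}$. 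Integrating in $s$ from $0$ to $t$: in low dimension $d<\alpha$ the exponent $-d/\alpha>-1$, so $\int_0^t s^{-d/\alpha}\,\d s\sim\frac{\alpha}{\alpha-d}t^{1-d/\alpha}=\frac{\alpha}{\alpha-d}\gamma_d(t)$; in the critical dimension $d=\alpha$ the integrand behaves like $1/s$, giving $\ln(t\vee1)=\gamma_d(t)$; in high dimension $d>\alpha$ the exponent is $<-1$ so $\int_0^\infty s^{-d/\alpha}\,\d s<\infty$ and the mean occupation time converges. This explains both $\gamma_d$ and the constant $\varkappa_d(\alpha)$ in~\eqref{def.kap}, and shows $\EE^\phi_\mu\big[Y_t(f)/\gamma_d(t)\big]\to\varkappa_d(\alpha)\uplambda_d(f)$ for $d\le\alpha$ and $\EE^\phi_\mu Y_t(f)\to$ (a finite limit) for $d>\alpha$.

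Next I would control fluctuations around the mean. From~\eqref{FVMP} one has $\int_0^t X_s(f)\,\d s=Y_t(f)$, and applying the martingale identity to the function $U_t f:=\int_0^t S_s f\,\d s$ (note $(-\Delta)^{\alpha/2}U_tf = f - S_tf$) yields a representation
\begin{equation*}
Y_t(f)=\int_0^t\mu(S_sf)\,\d s - M^X_t(U_tf)\cdot(\text{corrections}) + \text{martingale remainder},
\end{equation*}
more precisely one gets $X_t(U_tf)-\mu(U_tf)-\int_0^t X_s(f)\,\d s + \int_0^t X_s(S_{t-s}f)\,\d s=$ a martingale, which after rearrangement expresses $Y_t(f)$ minus its mean as a stochastic integral against $M^X$. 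The quadratic variation of that stochastic integral is, by~\eqref{FVQV}, bounded by $\eta\int_0^t\phi(s)^{-1}\,\|g_{s,t}\|_\infty^2\,\d s$ where $g_{s,t}$ is of the form $U_tf-U_sf=\int_s^t S_r f\,\d r$; using $\|S_rf\|_\infty\lesssim r^{-d/\alpha}\|\hat f\|_{L^1}$ for $r$ large and $\|S_rf\|_\infty\le\|f\|_\infty$ for $r$ small one checks $\|g_{s,t}\|_\infty\lesssim\gamma_d(t)$ uniformly, so the variance of $Y_t(f)/\gamma_d(t)$ is $\lesssim\int_0^t\gamma_d^2(s)\phi(s)^{-1}\,\d s$ — which is exactly the summability condition in Theorem~\ref{Main3}(b); under the weaker hypothesis~\eqref{con:occphi} one still gets, via a Borel–Cantelli / $L^2$-maximal argument along a geometric subsequence $t_n$ together with monotonicity of $Y_t(f)$ in $t$ for $f\ge0$, that $Y_t(f)/\gamma_d(t)\to\varkappa_d(\alpha)\uplambda_d(f)$ a.s.\ for $d\le\alpha$. (Decomposing a general $f$ into positive and negative parts reduces to $f\ge0$; monotonicity is what lets one fill the gaps between $t_n$ and $t_{n+1}$.) In high dimension the analogous estimate shows $\{Y_t(f)\}_t$ is an a.s.-Cauchy family, so $Y_\infty(f)$ exists, and since the limit depends linearly and continuously (in the shallow sense) on $f$ it is a random measure.

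Finally I would transfer everything from $Y$ to $Z$ using Theorem~\ref{Main3}(a): $Z_t(f)-Y_t(f)=\MM_t(\ell_f)$ is a martingale with quadratic variation~\eqref{eqn:histMa}, and the same semigroup bound gives $\langle\MM(\ell_f)\rangle_t\lesssim\int_0^t\gamma_d^2(s)\phi(s)^{-1}\,\d s$, hence $\MM_t(\ell_f)=o(\gamma_d(t))$ a.s.\ (in low/critical dimensions, by the subsequence-plus-monotonicity argument) and $\MM_\infty(\ell_f)$ exists and is finite (in high dimension), giving $Z_t/\gamma_d(t)$ the same shallow limit as $Y_t/\gamma_d(t)$ when $d\le\alpha$, and $Z_\infty=Y_\infty+\MM_\infty(\ell_f)$ a random measure when $d>\alpha$; this also yields part~(c) of Theorem~\ref{Main3}. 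The main obstacle, I expect, is the self-improvement from the mean-zero $L^2$ bound under the strong assumption $\int\gamma_d^2/\phi<\infty$ to the almost-sure shallow convergence under only $\int_1^\infty\d s/\phi(s)<\infty$: one cannot simply sum variances over all $t$, and the argument must lean on monotonicity of $Y_\cdot(f)$ (for $f\ge0$), an $L^2$ Doob maximal inequality on dyadic blocks, and careful bookkeeping of the block-increments' variances $\int_{t_n}^{t_{n+1}}\gamma_d^2(s)\phi(s)^{-1}\d s$ — showing these are summable after normalizing by $\gamma_d(t_n)^2$ precisely because $\gamma_d^2(s)/\gamma_d(t_n)^2$ is bounded on the block while $\int_1^\infty\phi^{-1}<\infty$. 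The extension to non-finite limit measures and the passage from the nice test-function class to full shallow convergence are routine once the pointwise a.s.\ limits and a uniform tail bound in $f$ are in hand.
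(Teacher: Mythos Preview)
Your proposal is correct and follows essentially the same route as the paper (Proposition~\ref{thm:occ} together with Proposition~\ref{prop:martcorr}): deterministic Fourier asymptotics for $\int_0^t\mu(T_sf)\,\d s$, the Green-function representation \eqref{gree.TX} to bound the fluctuation variance by $\int_0^t\gamma_d^2(s)\phi(s)^{-1}\,\d s$, Borel--Cantelli along a subsequence $t_n$ chosen so that $\gamma_d(t_n)=q^n$, monotonicity of $Y_\cdot(f)$ for $f\ge0$ to interpolate and then let $q\downarrow1$, the same quadratic-variation bound on $\MM(\ell_f)$ for the $Z-Y$ corrector, and a density argument for shallow convergence. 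The only minor deviations are that the paper handles the key summability by the Fubini swap $\sum_n q^{-2n}\int_0^{t_n}\gamma_d^2\,\phi^{-1}=\int\gamma_d^2\,\phi^{-1}\sum_{n:t_n\ge s}q^{-2n}\lesssim\int_1^\infty\phi^{-1}$ rather than your block-increment bookkeeping, and in high dimension simply uses monotonicity of $Y_t(f)$ plus Fatou (rather than a Cauchy argument) to get a finite limit.
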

This result relies on Proposition \ref{thm:occ} (to follow) and is proved at the end of Subsection \ref{sub:occlimit}.


\subsection{Explanation of Sampling Rate Assumptions}
We would like to thank an anonymous referee for inviting us to speculate around our sampling
rate assumptions.
Condition (\ref{con:phi0X}) can be considered heuristically in two ways:
Kouritzin and Ren \cite{MR3131303} showed the (shallow) a.s. convergence of $t^\frac{d}{\alpha} \frac{W_t}{e^{\beta t}}$
when $W$ was a superstable process with growth factor $\beta>0$. 
However, it is well known (see \cite{MR2047480}) that $e^{-\beta t}W_t(1)\rightarrow F$ a.s. for some non-trivial random 
variable $F$ in this case and the limits of $t^\frac{d}{\alpha} \frac{W_t}{e^{\beta t}}$ and
$t^\frac{d}{\alpha} \frac{W_t}{W_t(1)}$ will only differ by this factor $F$
(on the set where $F>0$).
Next, conditioning on $W_t(1)$ to be close to $e^{\beta t}$ might not have a huge effect since their
ratio converges.
Finally, Perkins \cite{MR1172149}'s argument on $\frac{W_t}{W_t(1)}$ conditioned so $W_t(1)$ is $e^{\beta t}$ has
martingale problem (\ref{FVMP},\ref{FVQV}) with $\phi(t)=e^{\beta t}$.
In this way, Theorem \ref{thm:X0vague} loosely generalizes Kouritzin and Ren \cite{MR3131303} from $\phi(t)=e^{\beta t}$
to any $\phi$ satisfying (\ref{con:phi0X}).
Secondly, the factor $t^\frac{d}{\alpha}$ on the left of (\ref{lim:Xvague}) is what is needed for a non-trivial limit in Theorem \ref{thm:X0vague} but this factor
blows up $X_t$ and its noise $M_t$.
To have an almost sure limit the noise has to die out fast enough through the $\phi(s)^{-1}$
factor in (\ref{FVQV}).
We can think of the $s^\frac{d}{\alpha}$ factor within the integral of (\ref{con:phi0X}) as compensation for 
blowing $X_t$ up by $t^\frac{d}{\alpha}$ and the integral without this factor as a condition on the
noise of $X$ itself.
The full force of (\ref{con:phi0X}) only comes to bear in Proposition \ref{thm:SLLN} through conditions (\ref{con:cseries},\ref{con:contiousphi}). 
In the proofs of Theorems \ref{thm:X0vague} and \ref{thm:stbSLLn}, we will decompose $X_t(f)$ as
\[
X_{\rho(t_n)}(L_{t_n-\rho(t_n)}f)+[X_{t_n}(T_{t_{n+1}-t_n}f)-X_{\rho(t_n)}(L_{t_n-\rho(t_n)}f)]+[X_t(f)-X_{t_n}(T_{t_{n+1}-t_n}f)],
\] 
where $T_t$ is the $\alpha$-stable semigroup, $L_t$ is an $N^{\rm{th}}$ order approximation of $T_t$
and $\rho$ is a sublinear function. 
It follows from Proposition \ref{prop:Xp} that $X_{\rho(t_n)}(L_{t_n-\rho(t_n)}f)$
satisfies the stated scaling limits using Fourier analysis under a 
lesser condition on $\phi$ so the other two terms can be thought of as
errors.
The first error term, handled in (\ref{tmp:322}), puts constraints on an
auxiliary sequence $\{c_n\}$ while the second error term, handled in
(\ref{tmp:421}), forces a constraint on $\phi $ depending upon the $\{c_n\}$. 
The two constraints are then solved in (\ref{tmp:deltakap}) under (\ref{con:phi0X}).
It would be interesting to know lesser conditions on $\phi$ under which one has
convergence in probability but not necessarily almost sure convergence.

\subsection{Article Outline}
Section \ref{sec:super_critical_fleming_viot_process} discusses fundamental results of Fleming-Viot processes. 
Section \ref{sec:stable_superprocesses} focuses on the long-time limit of $\alpha$-stable Fleming-Viot processes. 
In Section \ref{sec:occupation_times_of_fleming_viot_super_stable_processes}, long-time asymptotic of the occupation time process as well as the related inhabitation time process of 
an $\alpha$-stable Fleming-Viot process is investigated.

\section{Fleming-Viot processes} 

\label{sec:super_critical_fleming_viot_process}

We use $\nu(f)$ and $\langle f,\nu \rangle$ to denote $\int f \mathrm{d}\nu$ for a measure $\nu$ and integrable function $f$.
Let $(E,\cee(E))$ be a Polish space with its Borel $\sigma$-algebra $\cee(E)$ and $((\xi_t)_{t\ge0},(P_x)_{x\in E})$ be 
an $E$-valued Borel strong Markov process with sample paths in $D(E)$. Hereafter, $D(E)$ is the space of cadlag paths from $\RR_+:=[0,\infty)$ to $E$ equipped with the Skorohod $J_1$ topology. 
Define the semigroup on $b\cee(E)$ (the space of real-valued bounded measurable functions on $E$) by
\begin{equation*}
 	T_t f(x)=P_x f(\xi_t)\,.
\end{equation*} 
and assume that $T_t$ maps $C_b(E)$ (the space of real continuous bounded functions on $E$) to itself. 
The right-continuity of $\xi$ implies $\bplim_{t\to0}T_tf=f$ for every $f\in C_b(E)$. 
We also assume $\xi$ is  conservative, i.e. $T_t\1=\1$. Define
\begin{equation*}
 	Af=\bplim_{t\to0}\frac{T_tf-f}t
\end{equation*}
when the limit exists.  
The domain $\cdd(A)$ of $A$ contains all functions in $b\cee(E)$ such that 
the above limit exists, including the constant function $\1$ for which
$A\1=0$. 
$(A,\cdd(A))$ is the so-called weak generator of $\xi$. It is known (\cite{MR1915445}*{Corollary II.2.3}) that $\cdd(A)$ is bp-dense in $b\cee(E)$.
We adopt the following standard notation:
\begin{itemize}
\item		
$M_F(E)$, $M_1(E)$ denote the spaces of finite, respectively probability measures. 
\item
$(\Omega_F,\mathcal F)$, $(\Omega,{\mathcal G})$ are the sample spaces of (compact-open) continuous mappings 
$(C([0,\infty),M_F(E))$ respectively $C([0,\infty),M_1(E))$
with their respective Borel $\sigma$-fields. 
\item
$W_t(\omega)=\omega_t$, $X_t=\omega_t$ denote the coordinate mappings on $\Omega_F$ and $\Omega$ respectively.
\item
$\cff^0_t=\sigma(W_s:s\le t)$, $\cff_t=\cff^0_{t+}$; $\cgg^0_t=\sigma(X_s:s\le t)$, $\cgg_t=\cgg^0_{t+}$.
\end{itemize}
\subsection{Martingale problems} 
\label{sub:martingale_problems}
Let $E=\mathbb R^d$.
For each $\beta\ge0$, $\eta>0$ and $m\in M_F(\mathbb R^d)$, there is a unique probability $\PP_m$ on $(\Omega_F,\cgg)$ such that for all $f\in \cdd(A)$
\begin{equation}\label{DWMart1}
	M_t^W(f)=W_t(f)-m(f)-\int_0^t W_s(A f+\beta f){\rm d}s
\end{equation}
is a continuous $(\cff_t)$-martingale starting at $0$ with quadratic
variation
\begin{equation}\label{DWMart2}
	\langle M^W(f)\rangle_t=\eta\int_0^t W_s( f^2){\rm d}s\,.
\end{equation}
$\PP_m$ is the law of the critical or supercritical $A$-Dawson-Watanabe process with drift $\beta$ and 
branching variance $\eta$. 
\begin{remark}\label{rem:existenceblah}
There is substantial theory on the existence, uniqueness, path properties
and high density limits for Dawson-Watanabe superprocesses under
conditions far more general than required here.
However, the martingale problem and the connection to finite populations
motivate the study of long-time behaviour of our model. 
Hence, we will expand upon Example 10.1.2.2 in \cite{MR1242575} and
remind the reader of some basic points in the case $E=\mathbb R^d$
while neglecting details similar to those handled in \cite{MR1242575} and \cite{MR838085}.
It follows from the proofs of Theorems 8.4.2 and 8.4.3 of \cite{MR838085} that the
martingale problem,
\begin{equation}\label{DWMart3}
\exp(-W_t(f))+\int_0^t\exp(-W_s(f))\,W_s\left(Af+\beta f-\frac{\eta}2 f^2\right)\,\mathrm{d}s
\end{equation}
is a martingale for all non-negative $f\in \cdd(A)$, is well posed.
As part of justifying the use of these proofs, we note that
(\ref{DWMart3}) is the high density limit of
finite branching population models.
For example, using the notation of \cite{MR838085} and letting
$c\in\left(0,\frac1\eta\right)$, we find that the population
starting with $n$ individuals, undergoing independent $A$-motions with location-independent lifetime rates $\alpha_n=\frac{n}c$ and having
offspring probability generating function 
$\varphi_n(z)=c\left(\frac{\eta}2-\frac\beta{n}\right)
+\left(\frac{c\beta}n+1-c\eta\right)z+\frac{c\eta}2 z^2$
is a well defined model for large enough $n$ and these populations
converge (pathwise) to the solution of (\ref{DWMart3}) as $n\rightarrow\infty$.
Next, it follows from Corollary 2.3.3 of \cite{MR838085} that the martingale
problem in (\ref{DWMart3}) is equivalent to the martingale problem  
\begin{equation}\label{DWMart4}
\exp\left(-W_t(f)+\int_0^t W_s\left(Af+\beta f-\frac{\eta}2 f^2\right)\,\mathrm{d}s\right)
\end{equation}
is a martingale for all non-negative $f\in \cdd(A)$.
However, to go further, we must ensure that the $W$ is continuous.
This continuity is shown in Theorem 4.7.2 of \cite{MR1242575} for
the case $\beta=0$ and the case $\beta\ne 0$ is converted to the case $\beta=0$
by Dawson's Girsanov theorem (Theorem 7.2.2 and Lemma 10.1.2.1 of 
\cite{MR1242575}) with $r(\mu,y)=\beta$ and $Q(\mu;\mathrm{d}x,\mathrm{d}y)=\delta_x(\mathrm{d}y)\mu(\mathrm{d}x)$.
(This theorem is stated in terms of a larger domain but we already have
uniqueness for the smaller domain in (\ref{DWMart3}).)
Now, by this continuity, the martingale problem (\ref{DWMart4}) is equivalent to
martingale problem (\ref{DWMart1}-\ref{DWMart2}) by e.g.\ Theorem 6.2 \cite{MR1102676}.
\end{remark}
The process $\{W_t(1)\}_{t\ge0}$ describes the evolution of total mass with life time
\begin{equation*}
	t_W=\inf\{t>0: W_t(1)=0\}\,.
\end{equation*}
Even in the supercritical regime ($\beta>0$), $t_W$ is finite with positive probability. Using the martingale structure of $W$, we can describe the evolution of the normalized process $\overline W=\{\frac{W_t}{W_t(1)},\ 0\le t<t_W \}$ as in the following result. 
\begin{lemma}
Assume that $m\neq 0$. Let $\overline\cff_t=\cff_t\vee \sigma(W_s(1):s\ge0)$ and $\mu=m/m(1)$.	For every $f\in \cdd(A)$
\begin{equation*}
	M_t^{\overline W}(f)=\overline W_t(f)-\mu(f)-\int_0^t\1(s<t_W) \overline W_s(A f){\rm d}s\,,\quad t\ge0
\end{equation*}
is a continuous $(\overline\cff_t)-$martingale starting at 0 such that
\begin{equation*}
	\langle  M_t^{\overline W}(f)\rangle_t=\eta\int_0^t\1(s<t_W) (\overline W_s( f^2)-\overline W_s( f)^2)\frac{{\rm d}s}{W_s(1)} \quad\PP_m \mathrm{-a.s.}
\end{equation*}
\end{lemma}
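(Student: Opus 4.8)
The approach is to apply It\^o's formula to the ratio $\overline W_t(f)=W_t(f)/W_t(1)$ on the interval where the total mass is positive, with the Dawson--Watanabe martingale problem \eqref{DWMart1}--\eqref{DWMart2} supplying the semimartingale decompositions of numerator and denominator. Writing $D_t=W_t(1)$, one has $\d W_t(f)=W_t(Af+\beta f)\,\d t+\d M^W_t(f)$ and $\d D_t=\beta D_t\,\d t+\d M^W_t(1)$, and polarizing \eqref{DWMart2} over the functions $f+1,f,1\in\cdd(A)$ gives the brackets $\d\langle M^W(f),M^W(1)\rangle_t=\eta W_t(f)\,\d t$ and $\d\langle M^W(1)\rangle_t=\eta D_t\,\d t$. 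Since $D$ may vanish, I would first localize: set $\tau_n=\inf\{t\ge0:W_t(1)\le 1/n\}\wedge n$; as $W$ has continuous paths (Remark \ref{rem:existenceblah}) one has $\tau_n\uparrow t_W$ $\PP_m$-a.s., and on $[0,\tau_n]$ every integrand appearing below is bounded, so all computations are legitimate there.

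It\^o's formula for $(x,y)\mapsto x/y$ on $[0,\tau_n]$ yields, after collecting terms,
\[
\overline W_{t\wedge\tau_n}(f)=\mu(f)+\int_0^{t\wedge\tau_n}\overline W_s(Af)\,\d s+\int_0^{t\wedge\tau_n}\frac{\d M^W_s(f)}{D_s}-\int_0^{t\wedge\tau_n}\frac{\overline W_s(f)}{D_s}\,\d M^W_s(1),
\]
the point being two cancellations: the $\beta$-drift contributions $+\beta W_t(f)/D_t$ from $\frac1{D_t}\d W_t(f)$ and $-\beta W_t(f)/D_t$ from $-\frac{W_t(f)}{D_t^2}\d D_t$ cancel, and the It\^o-correction drift $-\frac1{D_t^2}\d\langle M^W(f),M^W(1)\rangle_t+\frac{W_t(f)}{D_t^3}\d\langle M^W(1)\rangle_t=\big(-\eta\tfrac{\overline W_t(f)}{D_t}+\eta\tfrac{\overline W_t(f)}{D_t}\big)\d t$ also vanishes, leaving only $\frac1{D_t}W_t(Af)=\overline W_t(Af)$ as drift. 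Thus $\big(M^{\overline W}(f)\big)^{\tau_n}$ equals the stochastic-integral part displayed above, a continuous $(\cff_t)$-martingale null at $0$, and its bracket, read off via $\langle\int H\,\d M,\int K\,\d N\rangle=\int HK\,\d\langle M,N\rangle$, is
\[
\langle M^{\overline W}(f)\rangle_{t\wedge\tau_n}=\eta\int_0^{t\wedge\tau_n}\Big(\frac{\overline W_s(f^2)}{D_s}-2\frac{\overline W_s(f)^2}{D_s}+\frac{\overline W_s(f)^2}{D_s}\Big)\d s=\eta\int_0^{t\wedge\tau_n}\frac{\overline W_s(f^2)-\overline W_s(f)^2}{W_s(1)}\,\d s.
\]
Letting $n\to\infty$ recovers the claimed identities on $[0,t_W)$; since each $\overline W_s$ is a probability measure, $|\overline W_s(f)|\le\|f\|_\infty$ and $|\overline W_s(Af)|\le\|Af\|_\infty$, so $M^{\overline W}(f)$ is bounded on every compact time interval, which promotes the local martingale to a genuine one, and the indicators $\1(s<t_W)$ simply record this restriction while keeping the formulas meaningful past $t_W$.

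The one genuinely delicate point, and the main obstacle, is that the asserted martingale property is with respect to the \emph{enlarged} filtration $\overline\cff_t=\cff_t\vee\sigma(W_s(1):s\ge0)$, which contains the entire future of the total-mass process. This is where the angular/radial structure of the superprocess enters: from the representation above, $\d\langle M^{\overline W}(f),M^W(1)\rangle_t=\frac1{D_t}\eta W_t(f)\,\d t-\frac{\overline W_t(f)}{D_t}\eta D_t\,\d t=0$, so $M^{\overline W}(f)$ is orthogonal to the martingale part of $W_\cdot(1)$; since on $[0,t_W)$ the information in $\sigma(W_s(1):s\ge0)$ is generated by the one-dimensional Brownian motion driving the Feller total-mass process, this orthogonality is precisely what permits conditioning on that Brownian motion without destroying the martingale property, along the lines of Etheridge--March \cite{MR1110534} and Perkins \cite{MR1172149}. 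I would invoke their argument for this final step rather than reproduce it.
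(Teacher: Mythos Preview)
Your proposal is correct and follows essentially the same route as the paper: the paper's proof simply says that the case $\beta=0$ is in Perkins \cite{MR1172149} via It\^o's formula and that $\beta>0$ is analogous, and you have carried out exactly that It\^o computation, correctly noting the cancellation of the $\beta$-drift terms and of the It\^o correction, and rightly flagging the filtration-enlargement step as the point where one must appeal to the orthogonality argument in \cite{MR1110534,MR1172149}.
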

\begin{proof}
The case when $\beta=0$ is proved in Perkins \cite{MR1172149} using It\^o formula. If $\beta>0$, the proof follows analogously, we omit the details.
\end{proof}
Let $C_+$ be the space of continuous functions $\phi:[0,\infty)\to[0,\infty)$ such that $\phi(t)>0$ if 
$t\in[0,t_ \phi)$ and $\phi(t)=0$ if $t\ge t_ \phi$ for some $t_ \phi\in(0,\infty]$. 
Let $Q_{m(1)}$ be the law of $W_\cdot(1)$, i.e.
\begin{equation*}
	\PP_m(W_\cdot(1)\in B)=Q_{m(1)}(B)\,.
\end{equation*}

\begin{theorem}[Perkins \cite{MR1172149}]\label{thm.Pmf}
For every $\phi\in C_+$ and $\mu\in M_1(E)$, there is a unique probability $\PP^\phi_{\mu}$ on $(\Omega, \cgg)$ such that under $\PP^\phi_{\mu}$, for all $f\in \cdd(A)$,
\begin{equation}\label{eqn:MX}
	M_t^X(f)=X_t(f)-\mu(f)-\int_0^t X_s(A f) {\rm d}s\,,\quad t<t_ \phi
\end{equation}
is a continuous $( \cgg_t)$-martingale starting at $0$ and such that
\begin{equation}\label{quad:MX}
\langle  M^X(f)\rangle_t=\eta\int_0^t(X_s( f^2)-X_s( f)^2)\phi(s)^{-1} {\rm d}s\quad\forall t<t_ \phi
\end{equation}
and $X_t=X_{t_ \phi}$ for all $t\ge t_ \phi$.
\end{theorem}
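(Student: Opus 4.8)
The plan is to treat existence and uniqueness separately, after first reducing to the regime $t<t_\phi$ on which $\phi$ is, on every compact subinterval, bounded above and bounded away from $0$; the ``freezing'' clause $X_t=X_{t_\phi}$ for $t\ge t_\phi$ is then a matter of definition and requires nothing further. Note also that conservativeness of $\xi$ and $A\1=0$ force $X_t(\1)=1$ to be preserved by any solution (the $\1$-drift vanishes and $\langle M^X(\1)\rangle\equiv0$), so the solution does live in $M_1(E)$ rather than merely $M_F(E)$.

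For \emph{uniqueness} I would use the function-valued (moment) dual of the Fleming--Viot martingale problem, adapted to the time-inhomogeneous sampling rate $\eta/\phi(s)$. Fix $T<t_\phi$, so $0<\inf_{[0,T]}\phi\le\sup_{[0,T]}\phi<\infty$. For $n\ge1$ and $g\in b\cee(E^n)$, consider the dual process on $\bigcup_m b\cee(E^m)$ that runs $n$ independent copies of the $A$-motion $\xi$ and, for each unordered pair of coordinates, at the (time-inhomogeneous, time-reversed-from-$T$) rate determined by $\eta/\phi$ collapses two coordinates into one. A standard generator computation --- matching the action of the Fleming--Viot generator read off (\ref{eqn:MX})--(\ref{quad:MX}) on the monomials $\nu\mapsto\langle\nu^{\otimes n},g\rangle$ against the dual generator --- together with the usual Ethier--Kurtz duality/Dynkin argument yields a closed expression for $\EE^\phi_\mu[\langle X_t^{\otimes n},g\rangle]$ in terms of $\mu$, the $\xi$-semigroup and the coalescence rates. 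Since polynomials in the maps $\nu\mapsto\langle\nu,h\rangle$, $h\in C_b(E)$, are measure-determining on $M_1(E)$, this pins down the law of $X_t$; and because any solution of the martingale problem is Markov (regular-conditional-probability argument applied to the martingale problem itself), uniqueness of all one-dimensional marginals upgrades to uniqueness of all finite-dimensional distributions, hence of $\PP^\phi_\mu$ on $(\Omega,\cgg)$.

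For \emph{existence} I would follow the conditioning construction indicated in the text. Start from the Dawson--Watanabe law $\PP_m$ with $m=\mu$ (the drift $\beta\ge0$ is immaterial after normalization, by the Lemma above), and for $\varepsilon>0$ and $T<t_\phi$ condition on the tube $A_{\varepsilon,T}=\{\sup_{t\le T}|W_t(1)-\phi(t)|<\varepsilon\}$; this has positive probability because $W_\cdot(1)$ is a Feller branching diffusion with strictly positive transition density on $(0,\infty)$ and satisfies the support theorem for its defining SDE. On $A_{\varepsilon,T}$ put $X^{\varepsilon}=\overline W=W/W_\cdot(1)$; by the Lemma above $X^\varepsilon$ solves the corresponding martingale problem with quadratic variation $\eta\int_0^\cdot(X^\varepsilon_s(f^2)-X^\varepsilon_s(f)^2)\,W_s(1)^{-1}\,\d s$, and on the tube $W_s(1)^{-1}$ is within $O(\varepsilon)$ of $\phi(s)^{-1}$ uniformly on $[0,T]$. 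The martingale bounds then give, via the Aldous--Rebolledo/Jakubowski criterion, tightness of $\{\mathrm{Law}(X^\varepsilon)\}_{\varepsilon>0}$ in $C([0,T],M_1(E))$, and any weak limit point as $\varepsilon\downarrow0$ solves (\ref{eqn:MX})--(\ref{quad:MX}) on $[0,T]$ (the drift and bracket terms pass to the limit by continuity and uniform integrability, using $\inf_{[0,T]}\phi>0$). Letting $T\uparrow t_\phi$ along a sequence and using the uniqueness above to glue consistently produces $\PP^\phi_\mu$ on $[0,t_\phi)$; one then extends by $X_t\equiv X_{t_\phi}$ for $t\ge t_\phi$.

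I expect the main obstacle to be the existence step --- specifically, controlling the conditioning as $\varepsilon\downarrow0$: getting quantitative enough lower bounds on $\PP_m(A_{\varepsilon,T})$ and ruling out degeneration of the conditional laws, so that the weak limit is a genuine $M_1(E)$-valued diffusion and not a point mass. Uniqueness, by contrast, is essentially bookkeeping once the time-inhomogeneous coalescent dual is correctly specified. An alternative route to existence that sidesteps the delicate conditioning --- approximating $\phi$ by piecewise-constant rates and the Fleming--Viot process by time-inhomogeneous Moran particle systems and then passing to the limit --- would also work, at the cost of a separate tightness-and-identification argument for the particle systems.
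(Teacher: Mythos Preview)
The paper does not supply its own proof of this theorem; it is stated with attribution to Perkins \cite{MR1172149} and followed only by a remark sketching Perkins' method and its extension from locally compact to Polish $E$. That method is quite different from yours: Perkins reduces the time-varying sampling rate $\eta/\phi(s)$ to the constant-rate case $\phi\equiv\1$ by state augmentation and a time change of the sampling martingale (carried out via the worthy martingale measure representation), and then invokes the already-known well-posedness of the classical Fleming--Viot martingale problem $\PP^{\1}_\mu$. Both existence and uniqueness are thus inherited simultaneously from the constant-rate case; no separate duality or tube-conditioning argument is run.

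Your uniqueness route via the time-inhomogeneous coalescent (function-valued) dual is sound and standard, and would work here. Your existence route via conditioning $W$ on the tube $A_{\varepsilon,T}$ is, as you yourself flag, the delicate part: while it is exactly the heuristic interpretation the paper gives in the introduction, making the $\varepsilon\downarrow0$ limit of the \emph{conditional} laws converge to a solution of \eqref{eqn:MX}--\eqref{quad:MX} is genuinely harder than Perkins' time change, which sidesteps the issue entirely by transporting the problem to one already solved. Your fallback (Moran particle approximation with piecewise-constant $\phi$) is closer in spirit to how the base case $\phi\equiv\1$ is established in the first place --- but again, once that base case is available, the time-change delivers general $\phi$ essentially for free. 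In short: your plan is workable and the uniqueness half is clean, but the existence half re-does work that Perkins' argument bypasses.
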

\begin{remark}
We will use this theorem in Polish spaces $E=\RR^d$ and $E=\mathbb E$,
defined just above \eqref{Def:Et}.
It is obtained under the assumption that $E$ is locally compact
in Theorem 2 (a) of \cite{MR1172149}. 
The proof in \cite{MR1172149} uses detailed arguments, state augmentation and worthy
martingale measure representation to change the speed of the sampling
martingale relative to the particles motions. 
This martingale time change argument is then used to infer the existence and uniqueness of 
$\PP^\phi_\mu$ from that of the law of the classical Fleming-Viot process, i.e. $\PP^\1_\mu$, which was only known on locally compact spaces. 
This is the only place in \cite{MR1172149} where locally compactness was used. 
The existence and uniqueness of Fleming-Viot processes on Polish spaces have been since obtained by Donnelly and Kurtz in \cites{MR1404525,MR1728556} based upon earlier ideas of Dawson and Hochberg \cite{MR659528}. Therefore, Perkins' argument carries through in the setting of Polish spaces. 
\end{remark}

The connection between Dawson-Watanabe processes and 
Fleming-Viot processes with time-varying sampling rates $\phi$ is as follows.
\begin{theorem}[\cite{MR1172149}*{Theorem 3}]
For every $m\in M_F(E)\setminus\{0\}$, set $\mu=m/m(1)$. For $Q_{m(1)}$--a.a. $\phi$, we have  
\begin{equation*}
\PP_m\lt(\frac{W}{W_\cdot(1)}\in A\bigg|W_{\cdot}(1)=\phi \rt)=\PP^\phi_{\mu}(A)\quad\forall A\in\cgg\,.
\end{equation*}
\end{theorem}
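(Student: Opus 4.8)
The plan is to realize the asserted identity as a \emph{disintegration} of $\PP_m$ along the total-mass map $\omega\mapsto W_\cdot(1)(\omega)$ and then to identify the disintegrating kernel using the uniqueness assertion of Theorem~\ref{thm.Pmf}. Since $(\Omega_F,\cff)$ and $C_+$ are Polish and $W_\cdot(1)$ is Borel measurable into $C_+$, a regular conditional probability $\phi\mapsto\hat\PP^\phi\in M_1(\Omega_F)$ for $\PP_m$ given $W_\cdot(1)$ exists and may be chosen \emph{proper}, i.e.\ $\hat\PP^\phi\big(W_\cdot(1)=\phi\big)=1$ for $Q_{m(1)}$-a.a.\ $\phi$. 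For such $\phi$, the pushforward of $\hat\PP^\phi$ under $\omega\mapsto W_\cdot(\omega)/W_\cdot(1)(\omega)$ is, by definition of conditional law, the measure $\PP_m\big(W/W_\cdot(1)\in\cdot\mid W_\cdot(1)=\phi\big)$ on $(\Omega,\cgg)$; call it $\mathbb Q^\phi$. It remains to prove $\mathbb Q^\phi=\PP^\phi_\mu$ for $Q_{m(1)}$-a.a.\ $\phi$.

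The engine is a conditional martingale transfer lemma. Because $\overline\cff_t=\cff_t\vee\sigma(W_s(1):s\ge0)$ already contains $\sigma(W_\cdot(1))$ at time $0$, if $(N_t)$ is a continuous $(\overline\cff_t)$-martingale under $\PP_m$ then, for fixed $0\le s\le t$ and $B\in\overline\cff_s$, the tower property gives $\EE_{\PP_m}\big[(N_t-N_s)\1_B\mid W_\cdot(1)\big]=0$ $\PP_m$-a.s.; running this over rational $s<t$ and over $B$ in a countable $\pi$-system generating $\overline\cff_s$, together with the same statement applied to $N^2-\langle N\rangle$, shows that off a single $Q_{m(1)}$-null set $N$ is a continuous $(\overline\cff_t)$-martingale under $\hat\PP^\phi$ with unchanged bracket $\langle N\rangle$. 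Apply this to $N=M^{\overline W}(f)$ from the Lemma preceding the statement. Under $\hat\PP^\phi$ one has $W_s(1)=\phi(s)$ for all $s$, hence $t_W=t_\phi$ and $\overline W_s=W_s/\phi(s)$, so
$$
\overline W_t(f)-\mu(f)-\int_0^t\1(s<t_\phi)\,\overline W_s(Af)\,{\rm d}s
$$
is a continuous $\hat\PP^\phi$-martingale with bracket $\eta\int_0^t\1(s<t_\phi)\big(\overline W_s(f^2)-\overline W_s(f)^2\big)\phi(s)^{-1}\,{\rm d}s$. To get this simultaneously for all $f\in\cdd(A)$, run the step above along a countable family $\{f_j\}\subset\cdd(A)$ rich enough to recover the martingale problem \eqref{eqn:MX}--\eqref{quad:MX} for every $f\in\cdd(A)$ by bounded pointwise limits (possible since $\cdd(A)$ is bp-dense in $b\cee(\RR^d)$ and $A$ is controlled on resolvent-type functions), discard the single exceptional $Q_{m(1)}$-null set, and pass to general $f$ using dominated convergence in the drift and bracket and $L^2$-convergence of the martingales. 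Since $\hat\PP^\phi(W_\cdot(1)=\phi)=1$, the paths of $\overline W$ under $\hat\PP^\phi$ are continuous $M_1(\RR^d)$-valued on $[0,t_\phi)$ (continuity of $W$ in $M_F(\RR^d)$ and $W_s(1)>0$ there) and extend by freezing at $t_\phi$ as in \cite{MR1172149}. Pushing forward by $\omega\mapsto\overline W_\cdot(\omega)$ and writing $X$ for the coordinate process, and restricting the (larger) filtration back down to $\cgg_t$, we conclude that $\mathbb Q^\phi$ solves the martingale problem \eqref{eqn:MX}--\eqref{quad:MX} with $X_t=X_{t_\phi}$ for $t\ge t_\phi$. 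By the uniqueness half of Theorem~\ref{thm.Pmf}, $\mathbb Q^\phi=\PP^\phi_\mu$ for $Q_{m(1)}$-a.a.\ $\phi$, which is the claim.

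The step I expect to be the main obstacle is the conditional martingale transfer lemma together with its null-set bookkeeping: one must check that conditioning on a $\sigma$-algebra lying inside the time-$0$ field carries both the martingale property and the predictable bracket over to $\hat\PP^\phi$ for $Q_{m(1)}$-a.e.\ $\phi$ (a monotone-class argument run over countably many triples $(s,t,B)$, using separability of the underlying path space to obtain a countable generating $\pi$-system), and that the exceptional null set can be chosen uniformly over the countable family of test functions $\{f_j\}$ needed to pin down the full martingale problem. The remaining delicate point is the behaviour at the extinction time $t_\phi$, so that the frozen normalized process is genuinely an $M_1(\RR^d)$-valued continuous process on all of $\RR_+$ rather than only on $[0,t_\phi)$; this is exactly where properness of the regular conditional probability, $\hat\PP^\phi(W_\cdot(1)=\phi)=1$, is used, and it is handled as in \cite{MR1172149}.
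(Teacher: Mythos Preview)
The paper does not give its own proof of this statement: it is quoted verbatim from Perkins~\cite{MR1172149} (Theorem~3 there) and, as the paper remarks immediately afterward, is used only to motivate the model in the case $E=\RR^d$. So there is no in-paper proof to compare your proposal against.

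That said, your proposal is essentially Perkins' original argument and is sound. The core observation---that $\sigma(W_\cdot(1))\subset\overline\cff_0$, so conditioning on it preserves the $(\overline\cff_t)$-martingale property of $M^{\overline W}(f)$ and of $M^{\overline W}(f)^2-\langle M^{\overline W}(f)\rangle$ for $Q_{m(1)}$-a.e.\ $\phi$---is correct, and the pushforward-plus-uniqueness step via Theorem~\ref{thm.Pmf} is exactly the right closing move. Two small comments: (i) rather than recovering the martingale problem for all $f\in\cdd(A)$ via bp-limits, it is cleaner to note that well-posedness of \eqref{eqn:MX}--\eqref{quad:MX} is determined by a countable separating subclass of $\cdd(A)$, so a single $Q_{m(1)}$-null set already suffices; (ii) the filtration reduction is automatic, since an $(\overline\cff_t)$-martingale that is $(\cgg_t)$-adapted is a $(\cgg_t)$-martingale by the tower property. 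The freeze at $t_\phi$ is indeed the one genuinely delicate point, and as you say it is handled in \cite{MR1172149}; in the present paper only the case $t_\phi=\infty$ is ever used.
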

\cite{MR1172149}*{Theorem 3} is in the setting of locally compact $E$, which
is fine for our purposes as we only use this theorem in the case of $E=\RR^d$
to motivate our work.

	Corollaries 4 and 5 of \cite{MR1172149} further establish that for every $\phi\in C_+$, $\PP^\phi_{\mu}$ is indeed the regular conditional law $\PP_m(\frac{W}{W_\cdot(1)}\in\cdot|W(1)=\phi)$. Without loss of generality, we assume $\eta=1$ hereafter.

\subsection{Long term asymptotics} 
\label{sub:long_term_asymptotics}
Let $E=\RR^d$, $\mu\in M_1(E)$ and $\phi\in C_+$ with $t_ \phi=\infty$. Let $\PP^\phi_{\mu}$ be the probability law introduced in Theorem \ref{thm.Pmf}. Recall $\{T_t\}_{t\ge0}$ is the semigroup generated by $A$.
In practice, the semigroup $T_t$ usually satisfies some asymptotical property. One possibility is the following: for each $t>0$, there exist  a deterministic positive scaling $c(t)$ and an operator $L_t$ such that 
\begin{equation}\label{con:Pct}
	\lim_{t\to\infty}c(t) \|T_tf - L_tf\|_{L^\infty(E)}=0\,.
\end{equation}
\eqref{con:Pct} becomes trivial if we choose $L_tf=T_tf$. 
However, we can choose a different $L_tf$ to our advantage. 
When $T_t$ is the symmetric stable semigroup considered in Section \ref{sec:stable_superprocesses}, $L_t$ can be chosen as the projection onto a finite dimensional vector space, whose basis are the partial derivatives of the kernel density $p_t(x)$ (see \eqref{stb:Pexpansion} and \eqref{stb:Lt} to follow).

In the current section, we present a general procedure to study long term asymptotic for $X_t(f)$ given a test function $f\in b\cee(E)$. 
The method consists of two steps. One first shows that $X_t(f)$ and $X_{\rho(t)}(T_{t- \rho(t)}f)$ have the same asymptotic as $t\to\infty$. 
Hereafter, $\rho:\RR_+\to\RR_+$ is an increasing sub-linear function, that is $\rho$ satisfies
\begin{equation}\label{con:rho}
	\lim_{t\to\infty}\frac{\rho(t)}t=0\,.
\end{equation}
This step requires a certain integrability condition of the function $1/\phi$ over $\RR_+$ (see Proposition \ref{prop:discretetime} below). 
Thanks to \eqref{con:Pct}, one can further deduce the asymptotic of $X_{\rho(t)}(T_{t- \rho(t)}f)$ from that of $X_{\rho(t)}(L_{t- \rho(t)}f)$. 
In the second step, having chosen $L_t$ in our favor, we find the asymptotic of $X_{\rho(t)}(L_{t- \rho(t)}f)$ directly by other tools. 
In Section \ref{sec:stable_superprocesses}, we explain how the procedure can be applied to study super stable processes and their occupation times.

In the context of Dawson-Watanabe processes with supercritical branching mechanisms, this method goes back to \cite{MR0420889} and has been extended to treat superprocesses with more general Markovian motions (see for instance \cites{MR3010225,MR2352485}). 
Until recently, it seemed
that Asmussen and Hering's method required a certain spectral gap assumption on the semigroup $T_t$. 
However, in \cite{KOURITZIN2018}, the same procedure is applied for supercritical branching Gaussian processes. 
The treatment presented here contains some simplifications and improvements.

Let us now develop a stochastic integration framework which is an essential tool in our approach.
Letting $M^X(U)= M^X(1_U)$, we note that for every $U,V\in\cee(E)$, 
\begin{equation*}
 	\langle M^X(U),M^X(V)\rangle_t\le \int_0^tX_s(\1_U \1_V)\frac{{\rm d}s}{\phi(s)} \,.
\end{equation*} 
In particular, $(M^X_t)_{t\ge0}$ is a worthy martingale measure (see \cite{MR876085}*{Chapter 2}).
For every adapted process $\{g(r,z)=g_r(z):r\ge0,z\in E\}$ satisfying
\begin{equation*}
	\PP^\phi_\mu\int_0^\infty X_r(g_r^2)\frac{\mathrm{d}r}{\phi(r)}<\infty\,,
\end{equation*}
one can construct the stochastic integral $\int_0^\infty\int_{ E}g(r,z)\mathrm{d}M^X(r,z)$ such that 
\begin{align}\label{cov.X}
	\PP^\phi_{\mu}\lt(\int_0^\infty\int_{ E}g(r,z)\mathrm{d}M^X(r,z) \rt)^2 
	=\PP^\phi_{\mu}\int_0^\infty (X_r( g_r^2)-X_r( g_r)^2)\frac{\mathrm{d}r}{\phi(r)}\,.
\end{align}  
We refer to \cite{MR876085}*{Chapter 2} for a detailed construction.

This worthy martingale measure representation allows us to extend the
martingale problem (\ref{eqn:MX},\ref{quad:MX}) by an integration by parts argument.
In particular, for continuously differentiable $f_t$ 
in $t$ that satisfies $f_t\in \cdd(A)$ for all $t$ and $\PP^\phi_\mu\int_0^\infty X_r(f_r^2)\frac{\mathrm{d}r}{\phi(r)}<\infty$, we have that
\begin{equation}\label{eqn:MXa}
\int_0^t\int_{E}f_r(z)\mathrm{d}M^X(r,z)=X_t(f_t)-\mu(f_0)-\int_0^t X_r(A f_r){\rm d}r-\int_0^t X_r(\partial_r f_r) {\rm d}r
\end{equation}
is a continuous $( \cgg_t)$-martingale starting at $0$ and such that
\begin{equation}\label{quad:MXa}
\langle  \int_0^\cdot\int_{E}f_r(z)\mathrm{d}M^X(r,z)\rangle_t=\eta\int_0^t(X_r( f^2_r)-X_r( f_r)^2)\phi(r)^{-1} {\rm d}r\,.
\end{equation}
The particular choice $f_s=\int_0^{t-s}T_rf\d r$ for $t$ fixed and $f\in b\cee(E)$ gives
\begin{equation}\label{gree.TX}
	\int_s^t X_r(f)\d r=X_s\lt(\int_0^{t-s}T_rf\d r\rt)+\int_s^t\int_E \int_0^{t-r}T_{\bar r}f(z)\d \bar r\d M^X(r,z)\,.
\end{equation}
Moreover, it follows from \eqref{eqn:MX} (and fact $t_\phi=\infty$) that for every $f\in b\cee(E)$,
\begin{equation}\label{green.X}
	X_t(f)=\mu(T_tf)+\int_0^t\int_{E}T_{t-r}f(z)\mathrm{d}M^X(r,z)\,,
\end{equation}
which is called Green function representation in \cite{MR1915445}*{pg. 167}.
The representation \eqref{green.X} and \eqref{cov.X}  play a central role in our approach. 
A direct consequence of \eqref{green.X} is the following identity
\begin{equation}\label{eqn:xxp}
		X_t( f)-X_s(T_{t-s} f)=\int_s^t\int_ET_{t-r} f(z)\mathrm{d}M^X(r,z)\,,
\end{equation}
which holds for every $0\le s\le t$ and $f\in b\cee(E)$.
Another consequence of \eqref{green.X} is 
\begin{equation}\label{eqn:1stXmoment}
	\PP^\phi_\mu X_t(f)=\mu(T_tf)\,.
\end{equation}
\begin{lemma}\label{lem:Pcon} For every $f\in b\cee(E)$ and $t\ge s\ge0$, we have
	\begin{align}\label{eqn:Xstep1}
		\PP^\phi_{\mu}\lt[(X_t(f)-X_s(T_{t-s}f))^2 \rt]
		\le\|T_{t}(f^2)\|_{\infty} \int_s^t \frac{\mathrm{d}r}{\phi(r)}\,,
	\end{align}
	and
	\begin{align}\label{est:TX}
		\PP^\phi_\mu\lt[\lt(\int_s^tX_r(f)\d r-X_s\lt(\int_0^{t-s}T_rf\d r \rt)\rt) ^2\rt]
		\le \int_s^t\lt\|\int_0^rT_{\bar r}f\d\bar r\rt\|_{\infty}^2\frac{\d r}{\phi(r)}\,.
	\end{align}
\end{lemma}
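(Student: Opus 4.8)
The plan is to obtain both inequalities directly from the stochastic-integral representations recorded above, combined with the isometry \eqref{cov.X}. The common ingredient is a localized form of \eqref{cov.X}: applying it to an integrand $g_r\1_{[s,t]}(r)$ gives
\[
\PP^\phi_\mu\lt(\int_s^t\int_E g_r(z)\,\d M^X(r,z)\rt)^2=\PP^\phi_\mu\int_s^t\lt(X_r(g_r^2)-X_r(g_r)^2\rt)\frac{\d r}{\phi(r)}\le\PP^\phi_\mu\int_s^t X_r(g_r^2)\,\frac{\d r}{\phi(r)}\,,
\]
where the last step discards the nonnegative ``variance'' term $X_r(g_r)^2$. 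Before using this I would check the (routine) integrability condition $\PP^\phi_\mu\int_s^t X_r(g_r^2)\,\frac{\d r}{\phi(r)}<\infty$ behind \eqref{cov.X}: in both applications the $g_r$ are uniformly bounded (since $\|T_{\bar r}f\|_\infty\le\|f\|_\infty$) and $\phi$, being continuous and strictly positive on the compact interval $[s,t]$, is bounded away from $0$ there.

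For \eqref{eqn:Xstep1} I would take $g_r=T_{t-r}f$, so that by \eqref{eqn:xxp} the left-hand difference equals $\int_s^t\int_E T_{t-r}f(z)\,\d M^X(r,z)$. Since $T_{t-r}$ is Markovian, conditional Jensen gives the pointwise bound $(T_{t-r}f)^2\le T_{t-r}(f^2)$, hence $X_r(g_r^2)\le X_r(T_{t-r}(f^2))$; taking $\PP^\phi_\mu$-expectation and using the first-moment identity \eqref{eqn:1stXmoment} together with the semigroup law, $\PP^\phi_\mu X_r(T_{t-r}(f^2))=\mu(T_r T_{t-r}(f^2))=\mu(T_t(f^2))\le\|T_t(f^2)\|_\infty$ because $\mu\in M_1(E)$. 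Inserting this into the displayed bound and pulling the constant out of $\int_s^t\frac{\d r}{\phi(r)}$ gives \eqref{eqn:Xstep1}.

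For \eqref{est:TX} I would instead take $g_r=\int_0^{t-r}T_{\bar r}f\,\d\bar r$, so that by \eqref{gree.TX} the left-hand difference is $\int_s^t\int_E g_r(z)\,\d M^X(r,z)$. Here $X_r$ is a probability measure, so $X_r(g_r^2)\le\|g_r\|_\infty^2$, and the displayed bound reduces the claim to estimating $\int_s^t\bigl\|\int_0^{t-r}T_{\bar r}f\,\d\bar r\bigr\|_\infty^2\,\phi(r)^{-1}\,\d r$; one then controls the $L^\infty$-norm of the integrand to reach the form stated in \eqref{est:TX}.

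I do not anticipate a genuine obstacle: the representations \eqref{eqn:xxp} and \eqref{gree.TX}, the isometry \eqref{cov.X}, and the moment identity \eqref{eqn:1stXmoment} do the real work, and the remaining steps (conditional Jensen, discarding the variance term, $\mu\in M_1(E)$) are elementary. The two points needing a little care are the localization of \eqref{cov.X} to the window $[s,t]$ (together with the integrability check above) and, in \eqref{est:TX}, handling the $L^\infty$-norm of the truncated potential $\int_0^{t-r}T_{\bar r}f\,\d\bar r$ produced by \eqref{gree.TX} — its upper limit is $t-r$ rather than $r$, so one must make sure this is what is controlled by the right-hand side of \eqref{est:TX}.
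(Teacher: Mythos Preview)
Your plan is correct and matches the paper's proof: for \eqref{eqn:Xstep1} the paper uses exactly the chain \eqref{eqn:xxp}--\eqref{cov.X}--\eqref{eqn:1stXmoment}--Jensen that you describe, and for \eqref{est:TX} it says only ``Showing \eqref{est:TX} is similar so we omit the detail.'' The discrepancy you flag is real: carrying out the argument via \eqref{gree.TX} yields $\bigl\|\int_0^{t-r}T_{\bar r}f\,\d\bar r\bigr\|_\infty$ rather than $\bigl\|\int_0^{r}T_{\bar r}f\,\d\bar r\bigr\|_\infty$, so the upper limit $r$ in the stated form of \eqref{est:TX} appears to be a misprint (for $t-r$) rather than a gap in your reasoning.
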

\begin{proof}
	From \eqref{eqn:xxp}, \eqref{cov.X} and \eqref{eqn:1stXmoment}
	\begin{align*}
		\PP^\phi_{\mu}\lt[(X_t(f)-X_s(T_{t-s}f))^2 \rt]
		&\le \PP^\phi_\mu\int_s^t X_r((T_{t-r}f)^2)\frac{\mathrm{d}r}{\phi(r)}
		\\&\le \int_s^t \langle T_r(T_{t-r}f)^2,\mu \rangle\frac{\mathrm{d}r}{\phi(r)}\,.
	\end{align*}
	By Jensen inequality, 
	\begin{equation*}
		T_r(T_{t-r}f)^2\le T_rT_{t-r}(f^2)=T_t(f^2)\,.
	\end{equation*}
	Hence, $\langle T_r(T_{t-r}f)^2,\mu \rangle\le \mu(T_t(f^2))\le \|T_t(f^2)\|_\infty$. The estimate \eqref{eqn:Xstep1} follows. Showing \eqref{est:TX} is similar so we omit the detail.
\end{proof}


\medskip\noindent\textbf{Convergence along lattice times.} 
Suppose that $f$ is a function in $b\cee(E)$. 
Let $\{t_n\}_{n\ge1}$ be an increasing sequence diverging to infinity such that
\begin{equation}\label{con:cseries}
	\sum_n c(t_n)\|T_{t_n}f^2\|_\infty\int_{\rho(t_n)}^{t_n}\frac{{\rm d}s}{\phi(s)}
	<\infty
\end{equation}
and
\begin{equation}\label{con:cratio}
	\lim_{n\rightarrow\infty}\frac{c(t_n)}{c(t_n- \rho(t_n))}=1\,.
\end{equation}

\begin{proposition}\label{prop:discretetime} Assuming \eqref{con:Pct}, \eqref{con:rho}, \eqref{con:cseries} and \eqref{con:cratio}, the following limit holds
	\begin{equation}\label{lim.Xtn}
		\lim_{n\rightarrow\infty} c(t_n)| X_{t_n}(f)-X_{\rho(t_n)}(L_{t_n- \rho(t_n)}f)|=0\quad \PP^\phi_{\mu}\mathrm{-a.s.}
	\end{equation}
\end{proposition}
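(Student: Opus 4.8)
The plan is to interpolate the genuine motion semigroup between the two quantities in \eqref{lim.Xtn}, writing (for $n$ large, so that $\rho(t_n)<t_n$)
\[
X_{t_n}(f)-X_{\rho(t_n)}\!\lt(L_{t_n-\rho(t_n)}f\rt)=\mathrm{I}_n+\mathrm{II}_n ,
\]
where $\mathrm{I}_n:=X_{t_n}(f)-X_{\rho(t_n)}\!\lt(T_{t_n-\rho(t_n)}f\rt)$ is a noise increment of the measure-valued martingale and $\mathrm{II}_n:=X_{\rho(t_n)}\!\lt(T_{t_n-\rho(t_n)}f-L_{t_n-\rho(t_n)}f\rt)$ is a deterministic ``semigroup-approximation'' error. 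I would bound $c(t_n)\mathrm{II}_n$ pathwise by a deterministic null sequence, and show $c(t_n)\mathrm{I}_n\to0$ $\PP^\phi_\mu$-a.s.

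\emph{The term $\mathrm{II}_n$.} Since $X_{\rho(t_n)}$ is a probability measure, $|\mathrm{II}_n|\le\|T_{t_n-\rho(t_n)}f-L_{t_n-\rho(t_n)}f\|_{L^\infty(E)}$. Because $\rho$ is sublinear, $s_n:=t_n-\rho(t_n)=t_n\lt(1-\rho(t_n)/t_n\rt)\to\infty$, so \eqref{con:Pct} gives $c(s_n)\|T_{s_n}f-L_{s_n}f\|_{L^\infty(E)}\to0$; multiplying by $c(t_n)/c(s_n)$ and using \eqref{con:cratio} yields
\[
c(t_n)\,|\mathrm{II}_n|\le\frac{c(t_n)}{c(t_n-\rho(t_n))}\cdot c(t_n-\rho(t_n))\,\|T_{t_n-\rho(t_n)}f-L_{t_n-\rho(t_n)}f\|_{L^\infty(E)}\longrightarrow 1\cdot 0=0 .
\]

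\emph{The term $\mathrm{I}_n$.} By \eqref{eqn:xxp} with $s=\rho(t_n)$, $t=t_n$, one has $\mathrm{I}_n=\int_{\rho(t_n)}^{t_n}\!\int_E T_{t_n-r}f(z)\,\mathrm{d}M^X(r,z)$, a legitimate stochastic integral since $(T_{t_n-r}f)^2\le\|f\|_{L^\infty(E)}^2$ and $\int_{\rho(t_n)}^{t_n}\frac{\mathrm{d}r}{\phi(r)}<\infty$ (recall $t_\phi=\infty$). The second-moment bound \eqref{eqn:Xstep1} of Lemma \ref{lem:Pcon} gives
\[
\PP^\phi_\mu\lt[\lt(c(t_n)\mathrm{I}_n\rt)^2\rt]\le c(t_n)^2\,\|T_{t_n}(f^2)\|_{L^\infty(E)}\int_{\rho(t_n)}^{t_n}\frac{\mathrm{d}r}{\phi(r)} .
\]
Summing over $n$ and invoking \eqref{con:cseries}, $\sum_n\PP^\phi_\mu[(c(t_n)\mathrm{I}_n)^2]<\infty$; hence by Chebyshev's inequality $\sum_n\PP^\phi_\mu(c(t_n)|\mathrm{I}_n|>\varepsilon)<\infty$ for every $\varepsilon>0$, whence the Borel--Cantelli lemma (used along $\varepsilon=1/k$, $k\in\NN$, and then intersecting the countably many almost-sure events) gives $c(t_n)\mathrm{I}_n\to0$ $\PP^\phi_\mu$-a.s. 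Adding the two estimates yields \eqref{lim.Xtn}.

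\emph{Main obstacle.} There is essentially no analytic obstacle inside this proposition: once the interpolation split is written down, $\mathrm{II}_n$ is handled entirely by the deterministic hypotheses \eqref{con:Pct} and \eqref{con:cratio}, and $\mathrm{I}_n$ by a single $L^2$ estimate (already supplied by Lemma \ref{lem:Pcon}) plus Borel--Cantelli. The genuine difficulty is pushed into the applications: one must later choose the lattice $\{t_n\}$ and the sublinear function $\rho$ so that \eqref{con:cseries} and \eqref{con:cratio} actually hold for the stable semigroup and the given $\phi$ (this is where conditions such as \eqref{con:phi0X}/\eqref{con:phiX} are consumed, in Theorems \ref{thm:X0vague} and \ref{thm:stbSLLn}). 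Within the present proof, the only points needing care are the validity of the stochastic-integral representation of $\mathrm{I}_n$ on $[\rho(t_n),t_n]$ and the elementary observation $t_n-\rho(t_n)\to\infty$.
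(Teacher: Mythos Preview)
Your argument is exactly the paper's: the same split $\mathrm{I}_n+\mathrm{II}_n$, with $\mathrm{II}_n$ controlled via $X_{\rho(t_n)}(1)=1$ together with \eqref{con:Pct} and \eqref{con:cratio}, and $\mathrm{I}_n$ via the $L^2$ bound of Lemma~\ref{lem:Pcon} plus Borel--Cantelli under \eqref{con:cseries}. One cosmetic remark: your second-moment bound correctly carries the factor $c(t_n)^2$, whereas the paper's displayed inequality and the statement of \eqref{con:cseries} show only $c(t_n)$---this is a typo in the paper, and it is your version (with the square) that the Borel--Cantelli step actually requires.
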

\begin{proof}
From Lemma \ref{lem:Pcon},
\begin{equation*}
	\sum_{n}\PP^\phi_{\mu}c(t_n)^2 \lt[|X_{t_n}(f)-X_{\rho(t_n)}(T_{t_n- \rho(t_n)}f)|^2 \rt]
	\le\sum_n c(t_n)\|T_{t_n}f^2_{t_n}\|_\infty\int_{\rho(t_n)}^{t_n}\frac{{\rm d}s}{\phi(s)}
	<\infty\,
\end{equation*}
by condition \eqref{con:cseries}.
An application of Borel-Cantelli lemma yields
\begin{equation*}
	\lim_{n\rightarrow\infty}\lt|c(t_n)X_{t_n}(f)-c(t_n)X_{\rho(t_n)}(T_{t_n- \rho(t_n)}f)\rt|=0\quad \PP^\phi_{\mu}\mathrm{-a.s.}
\end{equation*}
Moreover, noting that $X_{s}(1)=1$ for every $s>0$, we have
\begin{align*}
	&c(t_n)|X_{\rho(t_n)}(T_{t_n- \rho(t_n)}f)-X_{\rho(t_n)}(L_{t_n- \rho(t_n)}f)|
	\\&\le c(t_n)X_{\rho(t_n)}(|T_{t_n- \rho(t_n)}f- L_{t_n- \rho(t_n)}f|)
	\\&\le c(t_n)\|T_{t_n- \rho(t_n)}f- L_{t_n- \rho(t_n)}f\|_\infty\,,
\end{align*}
which converges $\PP^\phi_{\mu}$-a.s. to 0 by \eqref{con:Pct}, \eqref{con:rho} and \eqref{con:cratio}. The identity \eqref{lim.Xtn} follows. 
\end{proof}


\medskip\noindent\textbf{From lattice time to continuous time.} If the cost of replacing $c(t_n)$  by $c(t)$ for any $t\in[t_n,t_{n+1}]$ is negligible as $n\to\infty$, then previous result can be transfered to continuous time limit. There are several ways to obtain this. One possibility is the following result while Section \ref{sec:occupation_times_of_fleming_viot_super_stable_processes} provides  another way. Hereafter, $c_n$ denotes $\sup_{t\in[t_n,t_{n+1}]}c(t)$.
\begin{prop}\label{thm:SLLN}
In addition to the hypothesis in Proposition \ref{prop:discretetime},
we assume that
\begin{equation}\label{con:cD}
		\lim_{n\rightarrow\infty} c_n\sup_{t\in[t_{n},t_{n+1}]}\|T_{t_{n+1}-t}f-f\|_\infty =0\,,
	\end{equation}
	\begin{equation}\label{con:supc}
		\lim_{n\rightarrow\infty}\frac{c_n}{c(t_n)}=1\,,
	\end{equation}
	and
	\begin{equation}\label{con:contiousphi}
		\sum_{n}c_n\|T_{t_{n+1}}(f^2)\|_\infty\int_{t_n}^{t_{n+1}}\frac{{\rm d}s}{\phi(s)}<\infty\,.
	\end{equation}
	Then
	\begin{equation}\label{eqn:cx}
		\lim_{n\to\infty}\sup_{t\in[t_n,t_{n+1})}c(t)|  X_t(f)-X_{\rho(t_n)}( L_{t_n- \rho(t_n)}f)|=0\quad\PP^\phi_{\mu}\mathrm{-a.s.}
	\end{equation}
\end{prop}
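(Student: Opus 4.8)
The plan is, for each block $[t_n,t_{n+1}]$, to write the quantity inside \eqref{eqn:cx} as the sum of a genuine martingale run over that block, a semigroup-continuity error, and a term that does not depend on $t$ and is already controlled by Proposition \ref{prop:discretetime}. Fix $n$ and set
\begin{equation*}
	N^{(n)}_t:=X_t\!\lt(T_{t_{n+1}-t}f\rt)-X_{t_n}\!\lt(T_{t_{n+1}-t_n}f\rt)\,,\qquad t\in[t_n,t_{n+1}]\,,
\end{equation*}
and, for $t\in[t_n,t_{n+1})$, decompose
\begin{equation*}
	X_t(f)-X_{\rho(t_n)}\!\lt(L_{t_n-\rho(t_n)}f\rt)=N^{(n)}_t+X_t\!\lt(f-T_{t_{n+1}-t}f\rt)+\lt[X_{t_n}\!\lt(T_{t_{n+1}-t_n}f\rt)-X_{\rho(t_n)}\!\lt(L_{t_n-\rho(t_n)}f\rt)\rt]\,.
\end{equation*}
Since $c(t)\le c_n$ for $t\in[t_n,t_{n+1})$, it suffices to show that $c_n$ times the supremum over $t\in[t_n,t_{n+1}]$ of the absolute value of each of the three terms (the last being independent of $t$) tends to $0$ $\PP^\phi_\mu$-a.s.\ as $n\to\infty$.

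The two ``non-random'' terms are easy. Since $X_s$ is a probability measure, $|X_t(f-T_{t_{n+1}-t}f)|\le\|f-T_{t_{n+1}-t}f\|_\infty\le\sup_{s\in[t_n,t_{n+1}]}\|T_{t_{n+1}-s}f-f\|_\infty$, so $c_n\sup_{t\in[t_n,t_{n+1}]}|X_t(f-T_{t_{n+1}-t}f)|\to0$ by \eqref{con:cD}. For the last term I would split it as $X_{t_n}(T_{t_{n+1}-t_n}f-f)+[X_{t_n}(f)-X_{\rho(t_n)}(L_{t_n-\rho(t_n)}f)]$; the first summand is again bounded in absolute value by $\sup_{s\in[t_n,t_{n+1}]}\|T_{t_{n+1}-s}f-f\|_\infty$ and handled by \eqref{con:cD}, while
\begin{equation*}
	c_n\bigl|X_{t_n}(f)-X_{\rho(t_n)}(L_{t_n-\rho(t_n)}f)\bigr|=\frac{c_n}{c(t_n)}\cdot c(t_n)\bigl|X_{t_n}(f)-X_{\rho(t_n)}(L_{t_n-\rho(t_n)}f)\bigr|\longrightarrow 1\cdot 0=0
\end{equation*}
$\PP^\phi_\mu$-a.s., by \eqref{con:supc} and the conclusion \eqref{lim.Xtn} of Proposition \ref{prop:discretetime}.

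The heart of the matter is the term $N^{(n)}_t$. Applying the Green-function representation \eqref{green.X} to the function $T_{t_{n+1}-t}f$ and using the semigroup property gives $X_t(T_{t_{n+1}-t}f)=\mu(T_{t_{n+1}}f)+\int_0^t\int_E T_{t_{n+1}-r}f(z)\,\d M^X(r,z)$, so that
\begin{equation*}
	N^{(n)}_t=\int_{t_n}^t\int_E T_{t_{n+1}-r}f(z)\,\d M^X(r,z)\,,\qquad t\in[t_n,t_{n+1}]\,.
\end{equation*}
The crucial observation is that the integrand $T_{t_{n+1}-r}f(z)$ does not depend on the running upper limit $t$ (and is admissible, since $\PP^\phi_\mu\int_{t_n}^{t_{n+1}}X_r((T_{t_{n+1}-r}f)^2)\phi(r)^{-1}\d r<\infty$), so $(N^{(n)}_t)_{t\in[t_n,t_{n+1}]}$ is a genuine continuous $(\cgg_t)$-martingale started at $0$, with $\langle N^{(n)}\rangle_t\le\int_{t_n}^tX_r((T_{t_{n+1}-r}f)^2)\,\phi(r)^{-1}\d r$ by \eqref{cov.X}. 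Arguing exactly as in the proof of Lemma \ref{lem:Pcon} (Jensen's inequality and the semigroup property, together with \eqref{eqn:1stXmoment}) gives $\PP^\phi_\mu X_r((T_{t_{n+1}-r}f)^2)\le\|T_{t_{n+1}}(f^2)\|_\infty$, hence $\PP^\phi_\mu\langle N^{(n)}\rangle_{t_{n+1}}\le\|T_{t_{n+1}}(f^2)\|_\infty\int_{t_n}^{t_{n+1}}\phi(r)^{-1}\d r$. Doob's $L^2$-maximal inequality then bounds $\PP^\phi_\mu[\sup_{t\in[t_n,t_{n+1}]}|N^{(n)}_t|^2]$ by $4\|T_{t_{n+1}}(f^2)\|_\infty\int_{t_n}^{t_{n+1}}\phi(r)^{-1}\d r$, and the Borel--Cantelli argument of the proof of Proposition \ref{prop:discretetime}, now fed by \eqref{con:contiousphi}, yields $c_n\sup_{t\in[t_n,t_{n+1}]}|N^{(n)}_t|\to0$ $\PP^\phi_\mu$-a.s. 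Combining the three bounds gives \eqref{eqn:cx}.

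The step I expect to be the main obstacle is precisely this last one: one must pass from a pointwise-in-$t$ second-moment estimate of the kind \eqref{eqn:Xstep1} to a bound uniform over the whole block $[t_n,t_{n+1})$. The way around it is to realise the increment of $X_\cdot(f)$ from its prediction as a true martingale whose integrand is frozen in the running time --- which is exactly why one compares against $X_{t_n}(T_{t_{n+1}-t_n}f)$ rather than against $X_{t_n}(f)$ --- so that Doob's inequality becomes available. Hypotheses \eqref{con:cD} and \eqref{con:supc} are what make the deterministic semigroup-continuity and scaling errors vanish uniformly over each block, and \eqref{con:contiousphi} is the block-level counterpart of \eqref{con:cseries} that controls the stochastic fluctuation $N^{(n)}$.
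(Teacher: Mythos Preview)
Your proof is correct and follows essentially the same route as the paper: the identical three-term decomposition (semigroup-continuity error, the frozen-integrand martingale $N^{(n)}_t=\int_{t_n}^t\int_E T_{t_{n+1}-r}f\,\d M^X$, and the discrete-time remainder handled via Proposition \ref{prop:discretetime}), the same use of \eqref{con:cD} and \eqref{con:supc} for the deterministic pieces, and the same martingale-maximal-inequality plus Borel--Cantelli argument fed by \eqref{con:contiousphi} for $N^{(n)}$. The only cosmetic difference is that you obtain the martingale representation of $N^{(n)}$ from \eqref{green.X} whereas the paper cites \eqref{eqn:xxp} directly.
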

\begin{proof}
	We adopt an argument from \cite{MR30102 25}, which utilizes the properties of the semigroup $T_t$ and the martingale $M_t^X$ at the same time.
	For every $t\in[t_{n}, t_{n+1})$ we have
	\begin{equation*}
		|X_t(f)-X_t(T_{t_{n+1}-t}f)|\le X_t(|f-T_{t_{n+1}-t}f|)\le \sup_{t\in[t_{n},t_{n+1}]}\|T_{t_{n+1}-t}f-f\|_\infty\,.
	\end{equation*}
	It follows from \eqref{con:cD} that 
	\begin{equation}\label{tmp:fbyfn}
		\lim_{n}\sup_{t\in[t_{n},t_{n+1})}c(t)|X_t(f)-X_t(T_{t_{n+1}-t}f)|=0\,.
	\end{equation}
	Hence, to show \eqref{eqn:cx}, it suffices to prove
	\begin{equation}\label{tmp:420}
		\lim_{n\to\infty}\sup_{t\in[t_n,t_{n+1})} c(t)| X_t(T_{t_{n+1}-t}f)-X_{\rho(t_n)}(L_{t_n- \rho(t_n)}f)|=0\quad\PP^\phi_{\mu}\mathrm{-a.s.}
	\end{equation}
	From \eqref{eqn:xxp} we have
	\begin{align}
		X_t(T_{t_{n+1}-t}f)
		&=X_{t_{n}}(T_{t-t_n} T_{t_{n+1}-t}f)+\int_{t_{n}}^t\int_E T_{t-s} T_{t_{n+1}-t}f(x)\mathrm{d}M^X(s,x)
		\notag\\&=X_{t_{n}}(T_{t_{n+1}-t_{n}}f)+\int_{t_{n}}^t\int_E T_{t_{n+1}-s}f(x)\mathrm{d}M^X(s,x)\,.\label{tmp:321}
	\end{align}
	Similar to \eqref{tmp:fbyfn}, we have
\begin{equation*}
	\lim_n \sup_{t\in[t_n,t_{n+1})}c(t)|X_{t_{n}}(T_{t_{n+1}-t_{n}}f)-X_{t_n}(f)|=0.
\end{equation*}
	Together with Proposition \ref{prop:discretetime} and \eqref{con:supc}, this yields
	\begin{equation}\label{tmp:322}
	 	\lim_{n}\sup_{t\in[t_n,t_{n+1})}c(t)|X_{t_{n}}(T_{t_{n+1}-t_{n}}f)-X_{\rho(t_n)}(L_{t_n- \rho(t_n)}f)|=0\quad\PP^\phi_{\mu}\mathrm{-a.s.}
	\end{equation} 
Hence, \eqref{tmp:420} follows from \eqref{tmp:321} and \eqref{tmp:322} if we can show that
\begin{equation}\label{tmp:421}
	\lim_{n}c_n \sup_{t\in[t_n,t_{n+1}]}\lt|\int_{t_{n}}^t\int_E T_{t_{n+1}-s}f(x)\mathrm{d}M^X(s,x)\rt|=0\quad\PP^\phi_{\mu}\mathrm{-a.s.}
\end{equation}
	Fixing $\varepsilon>0$ and applying the martingale maximal inequality as well as Lemma \ref{lem:Pcon} and \eqref{eqn:xxp}, we have
	\begin{align*}
		\PP^\phi_{\mu}&\lt(c_n\sup_{t\in[t_n,t_{n+1}]}\lt|\int_{t_{n}}^t\int_E T_{t_{n+1}-s}f(x)\mathrm{d}M^X(s,x)\rt|>\varepsilon\rt)
		\\&\le \varepsilon^{-2}c_n^2\PP^\phi_{\mu}\lt|\int_{t_{n}}^{t_{n+1}}\int_E T_{t_{n+1}-s}f(x)\mathrm{d}M^X(s,x)\rt|^2
		\\&\le \varepsilon^{-2}c_n^2\|T_{t_{n+1}}f^2\|_\infty \int_{t_n}^{t_{n+1}} \frac{{\rm d}s}{\phi(s)}\,.
	\end{align*}
	Using \eqref{con:cseries}, we see that
	\begin{equation*}
		\sum_{n}\PP^\phi_{\mu}\lt(c_n\sup_{t\in[t_n,t_{n+1}]}\lt|\int_{t_{n}}^t\int_E T_{t_{n+1}-s}f(x)\mathrm{d}M^X(s,x)\rt|>\varepsilon\rt)<\infty\,.
	\end{equation*}
	Applying Borel-Cantelli lemma, we find that \eqref{tmp:421} follows and the proof is complete.
\end{proof}

\begin{remark}\label{rem:procedure}
	In view of Proposition \ref{prop:discretetime}, to study the long-time asymptotic of $X_t(f)$ for a test function $f$, we first study the long-time asymptotic of $T_tf$ and identify $c(t)$ and $L_t$ in \eqref{con:Pct}. Then, we establish the long-time limit for $X_{\rho(t)}(L_{t- \rho(t)}f)$ for a suitable sublinear function $\rho$. This procedure will be applied throughout Sections \ref{sec:stable_superprocesses} and \ref{sec:occupation_times_of_fleming_viot_super_stable_processes}.
\end{remark}

\subsection{Finite particle motivation} 
\label{sub:finite_motivation}
The inhabitation time $Z_t$ discussed in the introduction counts the
time spent (in sets) by all ancestors of all particles living at time $t$.
It counts common ancestors multiple times.
It does not count time for particles with no living descendants.
As such it requires genealogical information that is not readily available
from the Flemming-Viot process $X$ itself.
We need to construct the historical process $\XX$ associated with $X$.

To motivate historical processes and the difference between occupation and inhabitation times, 
we consider a finite particle approximation.
Suppose that $X^N_t=\frac1N \sum_{\alpha\sim t} \delta_{\xi^\alpha_t}$ is a
(Moran particle system empirical measure) pre-high-density limit of $X$.
$\{\xi^\alpha\}_{\alpha\in M}$ are particles that undergo independent
$A$-motions/mutations and are resampled at (time-inhomogeneous) rate
proportional to $N(N-1)$.
At a resampling time one random particle is selected to move to another
random particle's location.
This moved particle disowns her ancestors and adopts those of the
particle to which it jumped.
(This common convention is consistent with
Fleming-Viot superprocesses providing distributional information about
Dawson-Watanabe superprocess populations.
Sampling is simultaneous deaths and
generation of offspring from some of the dying particles.)
Here, the set of multi-indices $\alpha$ keep track of all particles, whether
they are living at $t$ or not, and $\alpha\sim t$ means particle $\alpha$
is alive at time $t$.
Naturally, there are $N$ particles alive at any time so $X^N_t$ is a
probability measure but the actual particles that are alive is dependent
upon which particles are sampled prior to $t$ and multi-indices $\alpha$ are
used to keep track of ancestors.
For example, particle $(1,2,3)$ would be the parent ancestor of $(1,2,3,1)$
and $(1,2,3,2)$ for random outcomes where they all exist.
Now, let $\xi_{[0,t]}^\alpha$ denote the ancestral path of particle
$\alpha$ as a $D(\RR^d)$-path held constant after $t$ so 
$\xi_{[0,t]}^\alpha(u)=\xi^\alpha_t$ for $u\ge t$.
Then, our times of interest are:
\begin{description}
\item[Occupation] $\displaystyle Y_t^N(1_{\coo})=\frac1N\int_0^t\sum_{\alpha\sim s}1_{\coo}(\xi^\alpha_s)\, \mathrm{d}s$ so $Y^N_t(f)=\int_0^t X^N_s(f) \mathrm{d}s$ .\\
\item[Inhabitation] $\displaystyle Z_t^N(1_{\coo})=\frac1N\int_0^t \sum_{\alpha\sim t}1_{\coo}(\xi^\alpha_s)\, \mathrm{d}s$ so $\displaystyle Z_t^N(f)=\sum_{\alpha\sim t}\int_0^t f(\xi^\alpha_{[0,t]}(s))\, \mathrm{d}s$.\\
\end{description}
for $\coo\subset \mathbb R^d$ and $f\in B(\RR^d)$.
Theorem \ref{Main3} in the introduction states that 
these two times (after high density limits) only differ by a martingale
defined in terms of this function $\ell_f$ i.e.\ that the multiple
counting of common ancestors is similar to the counting of time spent
by dead lineages.
Whereas $Y_t^N(f)$ was immediately expressed in terms of the empirical
process $X^N$, one can only easily express the inhabitation time in
terms of the
\begin{description}
\vspace*{0.2cm}
\item[Historical Process] $\displaystyle \mathbb X^N_t=\frac1N\sum_{\alpha\sim t}\delta_{(t,\xi^\alpha_{[0,t]})}$ in $\mathcal P(\mathbb E)$ supported on $\mathbb E^t$.
\end{description}
In particular,
$Z^N_t(f)=\mathbb X^N_t(\ell_f)$, where $\ell_f(t,y^t)=\int_0^t f(y^t_s) \mathrm{d}s$. 
(Here, $\mathbb E$ and $\mathbb E^t$ are defined around (\ref{Def:Et}) and
since $\mathbb X^N_t$ is supported on $\mathbb E^t$ we also have
$Z^N_t(f)=\mathbb X^N_t(\ell_f^t)$, where $\ell^t_f(r,y^r)=\int_0^{t\wedge r} f(y^r_s) \mathrm{d}s)$.)
To relate occupation and inhabitation times, we express $Y^N_t$ in terms of the historical
process as well.
For $f\in B(\mathbb R^d)$, we let $j^*f(r,y^r)\circeq f(y^r_r)\in B(\mathbb E)$ and find $X^N_t(f)=\mathbb X^N_t(j^*f)$
so $Y^N_t(f)=\int_0^t X^N_s(f) \mathrm{d}s=\int_0^t \mathbb X^N_s(j^*f) \mathrm{d}s$.
Notice, $t$ is included with $\xi^\alpha_{[0,t]}$ in the definition of the
historical process.
This is to allow time-inhomogeneous generator and to make
support properties obvious as will be seen below.
The developments of this motivating subsection survive the process
of taking high density limits while martingale problem formulation
actually gets easier.
We will use the historical martingale problem below to relate
the occupation and inhabitation times now that we have expressed
them both in terms of the historical process.
The first step is to define the historical process when there are
infinitely many particles.

\subsection{Fleming-Viot Historical processes} 
\label{sub:historical_processes}
Historical superprocesses were first introduced by Dawson and Perkins \cite{MR1079034}. 
To make our presentation manifest, we assume that $(\xi_t,P_x)$ is an $\RR^d$-valued 
Borel strong Markov process with sample path in $D:=D(\RR^d)$, the Skorohod space defined at the beginning of Section \ref{sec:super_critical_fleming_viot_process}. 
The weak generator of $\xi$ is still denoted by $(A,\cdd(A))$, $\mu\in M_1(E)$ and $\phi\in C_+$ with $t_ \phi=\infty$.

For each $(r,y)\in \mathbb E$, we consider the process $(\Xi_t)_{t\ge0}$ in $ \mathbb E$ defined by
\begin{equation*}
	\Xi_t=(r+t,(y\ltimes_r \xi)^{r+t})\,,
\end{equation*}
where for every $w,w'\in D(\RR^d)$, $w\ltimes_r w'$ is the concatenation path
\begin{equation*}
	w\ltimes_r w'(s) =
	\lt\{
	\begin{array}{ll}
		w(s)&\mbox{for }s\in[0,r)\\
		w(r)+w'(s- r)&\mbox{for }s\in[r,\infty)\,.
	\end{array}
	\rt.
\end{equation*}
The law of $\Xi$ is denoted by $P_{r,y}$, namely
\begin{equation*}
	P_{r,y}(\coo)=P_0(\Xi\in \coo)\quad\forall \coo\in\cee(D(\mathbb E))\,.
\end{equation*}
$((\Xi_t)_{t\ge0},P_{r,y})$ is called historical process, associated to $\xi$, with initial position at $\Xi_0=(r,y)$. 
$((\Xi_t)_{t\ge0},(P_{r,y})_{(r,y)\in\mathbb  E})$ is a time-homogeneous Borel strong Markov process in $\mathbb  E$ with semigroup
\begin{align}
	\TT_t&:C_b(\mathbb  E)\to C_b(\mathbb  E)
	\nonumber\\&\mathbb{T}_t f(r,y)=P_{r,y}f(\Xi_t)\,.
\end{align} 
(See \cite{MR1915445}*{Proposition II.2.5} for a more general result.) It is more convenient to express $\TT$ directly through $\xi$
\begin{equation}\label{id:TT}
	\TT_tf(r,y)=P_{y_r} f(r+t,(y\ltimes_r \xi)^{r+t})\,.
\end{equation}
We denote by $\AA$ the (weak) generator of $\TT$. A function $f\in b\cee(\mathbb E)$ belongs to the domain of $\AA$, $\cdd(\AA)$, iff the limit
\begin{equation*}
	\bplim_{h\downarrow0}\frac1h(\TT_hf(r,y)-f(r,y))
\end{equation*}
exists. In such case, we denote the limit as $\AA f(r,y)$.

Let $\tau\ge0$ and $\chi$ be a measure in $M_1(D)$ such that $\chi(\{y\in D(\RR^{d}):y^{\tau}=y \})=1$. 
Then, $\delta_\tau\times \chi$ is a probability measure on $\mathbb  E$. 
By Theorem \ref{thm.Pmf} there is a unique solution 
$(\mathbb X,\PP_{\tau,\chi}^\phi(\equiv\PP_{\delta_\tau\times \chi}^\phi))$  on $(\Omega,\cgg)$ (with $E=\mathbb  E$) to the $\AA$-martingale problem, meaning 
 
\begin{equation}\label{eqn:histX}
	\MM_t(f)=\XX_t(f)-\delta_\tau\times \chi(f)-\int_0^t\XX_s(\AA f){\rm d}s
\end{equation}
is a continuous $(\cgg_t)$-martingale starting at $0$ such that
\begin{equation}\label{eqn:histM}
	\langle\MM(f)\rangle_t=\int_0^t(\XX_s(f^2)-\XX_s(f)^2)\frac{{\rm d}s}{\phi(s)}
\end{equation}
for all $f\in \mathcal D(\AA)$.
The process $(\XX_t,\PP_{\tau,\chi}^\phi)$ is called the (time-homogeneous) historical Fleming-Viot  process. 
The relations \eqref{green.X} and \eqref{eqn:xxp} in the current context become respectively
\begin{align}
	&\XX_t(f)=\delta_\tau\times \chi(\TT_tf)+\int_0^t\int_\mathbb E\TT_{t-s}f(r,y)\mathrm{d}\MM(s,(r,y))\,,\label{eqn:histgreen}
	\\&\XX_u(f)-\XX_t(\TT_{u-t}f)=\int_t^u\int_\mathbb E\TT_{u-s}f(r,y)\mathrm{d}\MM(s,(r,y))\,,\label{eqn:histcond}
\end{align}
which hold for every $0\le t\le u$ and $f\in b\cee(\mathbb E)$.
In particular, 
\begin{equation}\label{eqn:hist1stmoment}
	\PP^\phi_{\tau,\chi}\XX_t(f)=\delta_\tau\times \chi(\TT_tf)\quad\forall t\ge0,\forall f\in b\cee(\mathbb E)\,.
\end{equation}

It is possible to recover the Fleming-Viot process $X$ from $\XX$. We just define the projection
\begin{align*}
	\jmath&: \mathbb E\to\RR^d
	\\&\jmath(r,y)=y_r\,,
\end{align*}
and put $X_{t}=\XX_t \circ\jmath^{-1}$, $M^X_{t}=\MM_t\circ \jmath^{-1}$, respectively the pushforward measures of $\XX_t,\MM_t$ via $\jmath$. 
	Each function $f$ in $b\cee(\RR^d)$ induces the function $\jmath^* f$ in $C_b(\mathbb  E)$ by $\jmath^* f(r,y)=f(y_r)$.
In addition, for each $f\in b\cee(\RR^d) $ we have
\begin{equation}\label{eqn:XXX}
	X_{t}(f)=\XX_t(\jmath^* f)\quad\mbox{and}\quad M^X_{t}(f)=\MM_t(\jmath^* f)\quad\forall t\ge \tau\,.
\end{equation}
If $f$ belongs to the domain of $A$, then $\jmath^* f$ belongs to the domain of $\AA$ and $\AA \jmath^* f=A f$.
	It follows from \eqref{eqn:histX} and \eqref{eqn:histM} that $(X,M^X)$ is a  Fleming-Viot process with law $\PP_{\mu}^\phi$, where $\mu=(\delta_\tau\times \chi)\circ \jmath^{-1}$.

	We give a brief investigation on the support of $\XX_t$. 
	Let $\Pi:\mathbb E\to D(\RR^d)$ be the projection $\Pi(r,y)=y$ and define an $M_1(D)$-valued process $(H_t,t\ge \tau)$ by
	\begin{equation*}
		H_{\tau+t}=\XX_{t}\circ \Pi^{-1}\quad\forall t\ge0\,.
	\end{equation*}
Define $\mathbb D^t=\Pi \mathbb E^t=\{y\in D:y^t=y\}$ for each $t\ge0$ and note
$\mathbb E=\cup_{t\ge0}\mathbb  E^t$.
The following result is an analog version of \cite{MR1915445}*{Lemma II.8.1}.
\begin{proposition}\label{prop:suppXX}
	$\XX_t= \delta_{\tau+t}\times H_{\tau+t}$ and $\mathrm{supp}\, H_{\tau+t}\subset \mathbb D^{\tau+t}$ for all $t\ge0$ $\PP^\phi_{\tau,\chi}$-a.s.
\end{proposition}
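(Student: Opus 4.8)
The plan is to reduce the two claims — that $\XX_{t}$ is concentrated on the time-fiber $\{\tau+t\}\times D$ and that $H_{\tau+t}$ is supported on the frozen paths $\mathbb D^{\tau+t}$ — to statements about the first-moment measure $\PP^\phi_{\tau,\chi}\XX_t$, using \eqref{eqn:hist1stmoment}. Indeed, for any measurable $G\subset\mathbb E$ with $\delta_\tau\times\chi(\TT_t\1_G)=0$ one gets $\PP^\phi_{\tau,\chi}\XX_t(\1_G)=0$, hence $\XX_t(G)=0$ a.s.; applying this to a countable generating family of such sets (and using the Markov/flow structure to upgrade from "for each $t$" to "for all $t$ simultaneously") yields the support assertions. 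So the first step is: compute $\TT_t\1_G$ explicitly from \eqref{id:TT} for the two relevant choices of $G$.

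First I would take $G=\mathbb E\setminus(\{\tau+t\}\times D)=\{(r,y)\in\mathbb E: r\ne \tau+t\}$. By \eqref{id:TT}, $\TT_t\1_G(r,y)=P_{y_r}\1_G(r+t,(y\ltimes_r\xi)^{r+t})=P_{y_r}\1\{r+t\ne \tau+t\}=\1\{r\ne\tau\}$, which vanishes $\delta_\tau\times\chi$-a.e. since $\delta_\tau$ is concentrated at $r=\tau$. Hence $\PP^\phi_{\tau,\chi}\XX_t(\{r\ne\tau+t\})=0$, giving the first claim $\XX_t=\delta_{\tau+t}\times H_{\tau+t}$ for each fixed $t$. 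For the second claim, note that for $(r,y)$ with $r=\tau$ and $y^\tau=y$, the path $(y\ltimes_r\xi)^{r+t}$ is by construction already equal to its own restriction at time $r+t=\tau+t$; so with $G_t=\{(r,y)\in\mathbb E: y\notin\mathbb D^{r}\ \text{(the path is not frozen at its time-coordinate)}\}$ one checks $\TT_t\1_{G_t}(r,y)=P_{y_r}\1\{(y\ltimes_r\xi)^{r+t}\notin\mathbb D^{r+t}\}=0$ identically, because $w^{u}\in\mathbb D^{u}$ for every path $w$ and every $u$. Since $\delta_\tau\times\chi$ is already supported on $\{y^\tau=y\}\subset\mathbb E^\tau$, this gives $\PP^\phi_{\tau,\chi}\XX_t(G_t)=0$, i.e.\ $\mathrm{supp}\,H_{\tau+t}\subset\mathbb D^{\tau+t}$ for each fixed $t$.

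The remaining work — and the place I expect the only real friction — is removing the "for each fixed $t$" and getting a single $\PP^\phi_{\tau,\chi}$-null set off which the statement holds for all $t\ge0$ simultaneously. The clean route is to mimic the proof of \cite{MR1915445}*{Lemma II.8.1}: because $\mathbb E=\cup_{u\ge0}\mathbb E^u$ with $\mathbb E^u$ closed and nested (the path space $\mathbb D^u$ increases in $u$), the set of $t$ for which $\XX_t$ charges the complement is, by right-continuity of $t\mapsto\XX_t$ in $M_1(\mathbb E)$ together with the fact that $\1_{\{r=\tau+t\}}$-type indicators can be sandwiched between continuous functions, a set that is a.s.\ empty once it is empty along a countable dense set of times; the Markov property at rational times (started from $\XX_s$, which itself lives on a time-fiber by the fixed-$t$ result) propagates the support constraint. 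One must be mildly careful that the sets in question are not open, so approximation by continuous test functions $f_k\uparrow \1_{G}$ or $f_k\downarrow\1_{G^c}$ is needed to invoke \eqref{eqn:histX}; this is exactly the kind of routine-but-fiddly measure-theoretic bookkeeping that \cite{MR1915445} handles, and I would cite that lemma's argument rather than reproduce it in full.
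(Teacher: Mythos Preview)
Your approach is correct and essentially the paper's: compute $\PP^\phi_{\tau,\chi}\XX_t(\1_{\{r\ne\tau+t\}})=\delta_\tau\times\chi(\TT_t\1_{\{r\ne\tau+t\}})=0$ via \eqref{eqn:hist1stmoment} and \eqref{id:TT}. You overcomplicate the two subsidiary points, however: the second claim follows immediately from the first because $(\tau+t,y)\in\mathbb E$ already forces $y\in\mathbb D^{\tau+t}$ by the definition of $\mathbb E$ (so your set $G_t$ is empty and no separate first-moment computation is needed), and the ``for all $t$ simultaneously'' upgrade requires only continuity of $t\mapsto\XX_t$ and of $t\mapsto\delta_{\tau+t}\times H_{\tau+t}=\delta_{\tau+t}\times(\XX_t\circ\Pi^{-1})$---agreement on a countable dense set then suffices---not the Markov property or sandwiching by continuous functions.
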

\begin{proof}
	We define
	\begin{equation*}
		\Lambda(t)=\{(r,y)\in \mathbb E:r\neq \tau+t \}\,.
	\end{equation*}
Then, by \eqref{eqn:histgreen} and \eqref{id:TT},
\begin{align*}
	\PP^\phi_{\tau,\chi}\XX_t(\1_{\Lambda(t)})
	&=\int_{D}\TT_t\1_{\Lambda(t)}(\tau,y)\mathrm{d}\chi(y)
	\\&=\int_{D}E_0\1_{\Lambda(t)}(\tau+t,(y\ltimes_\tau \xi)^{\tau+t})\mathrm{d}\chi(y)=0\,.
\end{align*}
This shows $\XX_t=\delta_{\tau+t}\times H_{\tau+t} $ $\PP^\phi_{\tau,\chi}$-a.s. for each $t\ge0$ and hence for all $t\ge0$ by the right-continuity of both sides. 
The later assertion in the proposition statement follows from the former. 
Indeed, for every $\coo\in\cee(D)$,
\begin{align*}
	H_{\tau+t}(\coo)=\XX_t(\Pi^{-1}\coo)=\delta_{\tau+t}\times H_{\tau+t}(\{(r,y)\in \mathbb E: y^r\in\coo \} )
	=H_{\tau+t}(\{y\in\coo:y^{\tau+t}=y\})\,,
\end{align*}
which implies $\mathrm{supp}\, H_{\tau+t}\subset \mathbb D^{\tau+t}$.
\end{proof}
\begin{remark}\label{rmk:supp}
The process $(H_t)_{t\ge \tau}$ is time inhomogeneous and is called historical superprocess in literature (\cites{MR1079034,MR1124827}). 
In the current article, we use its time-homogeneous counter part $(\XX_t)_{t\ge0}$. 
It is evident from the previous result that under $\PP^\phi_{\tau,\chi}$, 
$\mathrm{supp}\,\XX_t\subset\mathbb E^{\tau+t}$. Consequently, for every bounded measurable function $f$ on $\mathbb E^{\tau+t}$
		\begin{equation}\label{eqn:restriction}
			\XX_t(f)=\int_\mathbb E f(r,y)\mathrm{d}\XX_t(r,y)=\int_\mathbb E f(r,y)\1_{(r= \tau+t)} \mathrm{d}\XX_t(r,y) 
		\end{equation} 
		and
		\begin{equation}\label{eqn:1stmhist}
			|\PP^\phi_{\tau,\chi}\XX_t(f)|\le \|f\|_{L^\infty(\mathbb E^{\tau+t})}\,.
		\end{equation}
In addition, it is seen from \eqref{eqn:histM} that $\supp\MM_t\subset\supp\XX_t\subset \mathbb E^{\tau+t}$.
	\end{remark}
	
Our interest is the superprocess $(X_t)_{t\ge0}$ starting from a specified initial measure $X_0=\mu$. 
Hence, it is natural to simply take $\tau=0$ for the historical process 
$(\mathbb X_t)_{t\ge0}$. 
In such case, the measure $\chi$ can also be constructed (uniquely) from $\mu$ by
\begin{equation*}
	\chi(\coo)=\mu^*(\coo)=\mu(\{y(0):y\in\coo\})\quad\forall\coo\in\cee(\mathbb E)\,.
\end{equation*}

\subsection{Occupation times and inhabitation times} 
\label{sub:inhabitation_times}
	The \textit{occupation time} process $(Y_t)_{t\ge0}$ associated with $(X_t)_{t\ge0}$ is the measure-valued process defined by
	\begin{equation}\label{def:occupation}
		Y_t(\coo)=\int_0^tX_s(\coo){\rm d}s\quad\forall\coo\in\cee(\RR^d)\,.
	\end{equation}
	In the context of critical Dawson-Watanabe processes, the occupation time process was introduced and studied by \cite{MR814663} by means of Laplace functionals. 
Our other time of interest \textit{inhabitation time} is defined through
the historical process and the counting function $\ell_f$.
It is natural to ask whether $\ell_f$ defined in \eqref{ellf:eqn} is measurable
when restricted to $\mathbb E$. 
\begin{proposition}\label{prop:ellmeas}
	For every $f\in b\cee(\RR^d)$, $\ell_f:(\mathbb E,\cee(\mathbb E))\to(\RR,\cee(\RR))$ is measurable.
\end{proposition}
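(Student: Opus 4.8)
The plan is to show measurability of $\ell_f$ by exhibiting it as a pointwise limit of manifestly measurable functions, exploiting the dyadic-Riemann-sum structure of the integral $\int_0^r f(y_s)\,\d s$. First I would recall that $\mathbb E\subset \RR_+\times D(\RR^d)$ carries the subspace topology, so $\cee(\mathbb E)$ is generated by the coordinate maps $(r,y)\mapsto r$ and $(r,y)\mapsto y$, and it suffices to check that the map $(r,y)\mapsto \int_0^r f(y_s)\,\d s$ is $\cee(\mathbb E)$-measurable. For each $n\ge 1$, define the simple approximation
\begin{equation*}
	\ell^{(n)}_f(r,y)=\sum_{k=0}^{\infty} f\!\left(y_{k 2^{-n}}\right)\,\lambda\!\left([k2^{-n},(k+1)2^{-n})\cap[0,r)\right)\,,
\end{equation*}
where $\lambda$ is Lebesgue measure on $\RR_+$; only finitely many summands are nonzero for each fixed $(r,y)$, and each summand is a measurable function of $(r,y)$ since $r\mapsto \lambda([k2^{-n},(k+1)2^{-n})\cap[0,r))$ is continuous and $y\mapsto y_{k2^{-n}}$ is Borel measurable on $D(\RR^d)$ with the $J_1$ topology (evaluation at a fixed time is Borel; note we are not claiming continuity, only measurability). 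Hence each $\ell^{(n)}_f$ is measurable on $\RR_+\times D(\RR^d)$, a fortiori on $\mathbb E$.

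Next I would argue that $\ell^{(n)}_f(r,y)\to \ell_f(r,y)$ pointwise as $n\to\infty$ for every $(r,y)$. For a fixed cadlag path $y$, the function $s\mapsto f(y_s)$ is bounded and has at most countably many discontinuities (the set of discontinuities of $y$ is at most countable, and $f$ is continuous, being in $b\cee(\RR^d)$ — here one should use that $\cee(\RR^d)$-measurability plus boundedness already suffices, but continuity of $f$ makes $s\mapsto f(y_s)$ Riemann integrable on $[0,r]$ directly). Therefore $s\mapsto f(y_s)$ is Riemann integrable on $[0,r]$, and the left-endpoint dyadic Riemann sums converge to $\int_0^r f(y_s)\,\d s$; since $\ell^{(n)}_f(r,y)$ is exactly such a Riemann sum (with the last, possibly partial, cell handled by the intersection with $[0,r)$), we get $\ell^{(n)}_f(r,y)\to\ell_f(r,y)$. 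A pointwise limit of measurable functions is measurable, so $\ell_f$ is $\cee(\mathbb E)$-measurable.

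If one prefers to avoid relying on Riemann integrability and work purely measure-theoretically, an alternative is to use the bounded variant $\ell^u_f(r,y)=\int_0^{r\wedge u}f(y_s)\,\d s$ and a Fubini/Tonelli argument: the map $(s,r,y)\mapsto f(y_s)\1_{[0,r)}(s)$ is jointly measurable (it is a product of the measurable map $(s,y)\mapsto f(y_s)$ — measurable because $(s,y)\mapsto y_s$ is jointly Borel on $\RR_+\times D(\RR^d)$ by right-continuity — and the measurable set $\{s<r\}$), so integrating out $s$ against Lebesgue measure preserves measurability by Tonelli. The main obstacle, and the only point requiring care, is the joint measurability of the evaluation map $(s,y)\mapsto y_s$ on the Skorohod space; this is standard (it follows from right-continuity: $y_s=\lim_{n} y_{\lceil 2^n s\rceil 2^{-n}}$, a limit of maps that are, for fixed dyadic time, Borel in $y$ and, in $s$, piecewise constant hence measurable), and I would cite it rather than reprove it. Everything else is routine, so I would keep the write-up to the dyadic-Riemann-sum argument of the first two paragraphs.
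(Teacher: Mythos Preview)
Your primary argument has a genuine gap: you write that ``$f$ is continuous, being in $b\cee(\RR^d)$'', but in this paper $b\cee(\RR^d)$ denotes the space of bounded \emph{Borel measurable} functions, not bounded continuous ones (the latter is $C_b(\RR^d)$). For a merely Borel $f$, the map $s\mapsto f(y_s)$ need not be Riemann integrable on $[0,r]$---take $y_s=s$ and $f=\1_A$ for a Borel set $A$ whose boundary has positive Lebesgue measure---so your dyadic Riemann sums need not converge to $\ell_f(r,y)$. The parenthetical remark that ``$\cee(\RR^d)$-measurability plus boundedness already suffices'' is not justified and is in fact false for this approximation scheme. Thus the argument you say you would actually write up does not prove the proposition.

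Your alternative Fubini/Tonelli argument, however, is correct and self-contained: joint Borel measurability of the evaluation map $(s,y)\mapsto y_s$ on $\RR_+\times D(\RR^d)$ is standard (and your right-continuity sketch is fine), so $(s,r,y)\mapsto f(y_s)\1_{[0,r)}(s)$ is jointly measurable and integrating out $s$ yields a measurable function of $(r,y)$. That is the argument you should keep. It is also different from the paper's proof, which first establishes \emph{continuity} of $\ell_f$ for continuous $f$ (via an Ethier--Kurtz lemma on the map $y\mapsto\int_0^\cdot f(y_s)\,\d s$), then approximates indicators of closed sets pointwise by continuous functions, and finally invokes a monotone class theorem to reach all of $b\cee(\RR^d)$. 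Your Fubini route is more direct and avoids the monotone class machinery; the paper's route has the side benefit of identifying when $\ell_f$ is actually continuous.
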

\begin{proof}
First, suppose $f$ is continuous.  
Then, it follows by Ethier and Kurtz \cite{MR838085}*{Problems 3.11.13 and 3.11.26} that
$ D(\mathbb E)\ni y\rightarrow \int_0^\cdot f(y_s){\rm d}s\in D(\mathbb R)$ is continuous
and so $(r,y)\rightarrow \int_0^r f(y_s){\rm d}s$ is also continuous.
Now, let $\coo$ be a closed set in $\RR^d$. 
Then, there exist continuous $f^n$ such that $f^n\rightarrow \1_{\coo}$ pointwise
by Billingsley \cite{MR0233396}*{Theorem 1.2} so for every $(r,y)\in \mathbb E$,
\[
\lim_{n\rightarrow \infty}\ell_{f_n}(r,y)=\int_0^r \lim_{n\rightarrow \infty}f_n(y_s){\rm d}s=\int_0^r  \1_{\coo}(y_s){\rm d}s=\ell_ {\1_{\coo}}(r,y)	
\]
by dominated convergence and $\ell_ {\1_{\coo}}$ is measurable.
Finally, the family  $\chh=\{f\in b\cee(\RR^d):\ell_f\textrm{ is measurable}\}$ contains
$\1_\coo\in\chh$ for every closed set $\coo\subset\RR^d$ and is closed under additions, 
scalar multiplications and pointwise limits. 
Hence, $\chh=b\cee(\RR^d)$ by the monotone class theorem.
\end{proof}
Let $(\XX_t)_{t\ge0}$ be the historical Fleming-Viot process constructed in subsection \ref{sub:historical_processes}.
The inhabitation time process $(Z_t)_{t\ge0}$ associated with $X$ is the measure-valued process defined by
\begin{equation*}
		Z_t(\coo)=\XX_t(\ell_{\1_\coo})\quad\forall t\ge0,\coo\in \cee(\RR^d)\,.
\end{equation*}
$\XX_t(\ell_{f})$ makes sense at least for non-negative $f$ since $\ell_f$ is measurable.
As mentioned in the introduction $\XX_t(\ell_{f})$
satisfies martingale problem (\ref{eqn:histXa},\ref{eqn:histMa}) once 
we know each $\ell_f^t\in\mathcal D(\AA)$.
\begin{lemma}\label{Nov7_18a}
Let $f$ be a function in $b\cee(\RR^d)$. Then, for every $t,h>0$ and every $(r,y)$ in $\mathbb E$,
\begin{equation}\label{eqn:m33}
		\TT_h\ell_f^t(r,y)=\ell_f^t(r,y)+\1_{(r<t)} \int_0^{(r+h)\wedge t-r} T_{s}f (y_r){\rm d}s\,.
\end{equation}
In addition, $\ell_f^t$ belongs to the domain of $\AA$ and
\begin{equation}\label{eqn:hag}
	\AA\ell_f^t(r,y)= f(y_r)\1_{(r<t)}\quad\mbox{and}\quad \AA\ell_f(r,y)= \bplim_{t\rightarrow\infty}\AA\ell_f^t(r,y)=f(y_r).
\end{equation}
\end{lemma}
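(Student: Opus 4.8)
The plan is to prove the explicit semigroup identity \eqref{eqn:m33} first, by a direct computation from the definition \eqref{id:TT} of $\TT$ and the structure of the concatenated path, and then to read off membership of $\ell_f^t$ in $\cdd(\AA)$ together with the formula for $\AA\ell_f^t$ by letting $h\downarrow0$ in \eqref{eqn:m33}; the claim about $\AA\ell_f$ will then follow as a bounded pointwise limit in $t$.

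For \eqref{eqn:m33}, I would fix $(r,y)\in\mathbb E$ and $t,h>0$ and start from \eqref{id:TT}, noting that truncating a path at level $r+h$ changes nothing on $[0,(r+h)\wedge t]$, so that
\[
\TT_h\ell_f^t(r,y)=P_{y_r}\Big[\int_0^{(r+h)\wedge t}f\big((y\ltimes_r\xi)_s\big)\,{\rm d}s\Big].
\]
Then I split into cases. If $r\ge t$, then $(r+h)\wedge t=t\le r$, the integrand only sees the deterministic path $y$ on $[0,t]$, and the right-hand side collapses to $\ell_f^t(r,y)$, which is \eqref{eqn:m33} since $\1_{(r<t)}=0$. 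If $r<t$, I break $[0,(r+h)\wedge t]$ at $r$: the piece on $[0,r)$ contributes the deterministic $\int_0^r f(y_s)\,{\rm d}s=\ell_f^t(r,y)$, while on $[r,(r+h)\wedge t]$ the concatenated path evolves (for the purpose of taking the $P_{y_r}$-expectation) as the trajectory of $\xi$ run from $y_r$, so that after the substitution $s\mapsto s-r$ and an interchange of $P_{y_r}$-expectation with the bounded, finite-horizon time integral — Tonelli, the joint measurability being the one underlying Proposition~\ref{prop:ellmeas} — this piece becomes $\int_0^{(r+h)\wedge t-r}T_sf(y_r)\,{\rm d}s$. Adding the two pieces gives \eqref{eqn:m33}.

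For the statement about $\AA$, $\ell_f^t$ is bounded (by $t\|f\|_{L^\infty(\RR^d)}$) and measurable (Proposition~\ref{prop:ellmeas}), so it remains to compute $\bplim_{h\downarrow0}h^{-1}(\TT_h\ell_f^t-\ell_f^t)$. By \eqref{eqn:m33} this difference quotient equals $\1_{(r<t)}h^{-1}\int_0^{(r+h)\wedge t-r}T_sf(y_r)\,{\rm d}s$, which is bounded by $\|f\|_{L^\infty(\RR^d)}$ uniformly in $h$, vanishes when $r\ge t$, and for $r<t$ equals $h^{-1}\int_0^hT_sf(y_r)\,{\rm d}s$ as soon as $h\le t-r$. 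Letting $h\downarrow0$ and using the right-continuity $\bplim_{s\downarrow0}T_sf=f$, the quotient converges pointwise and boundedly to $f(y_r)\1_{(r<t)}$, whence $\ell_f^t\in\cdd(\AA)$ with $\AA\ell_f^t(r,y)=f(y_r)\1_{(r<t)}$. Finally, for each fixed $(r,y)$ one has $\ell_f^t(r,y)=\ell_f(r,y)$ and $\AA\ell_f^t(r,y)=f(y_r)$ as soon as $t>r$, so $\AA\ell_f^t\to\jmath^*f$ pointwise and boundedly (by $\|f\|_{L^\infty(\RR^d)}$) as $t\to\infty$; this is exactly the limit \eqref{Aforellf} defining $\AA\ell_f$, with value $f(y_r)$.

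The step I expect to need genuine care is the passage $h\downarrow0$ when $f$ is only bounded measurable rather than continuous: the right-continuity fact recalled in Section~\ref{sec:super_critical_fleming_viot_process} is stated for $C_b(\RR^d)$, so one must upgrade $\bplim_{s\downarrow0}T_sf=f$ to $f\in b\cee(\RR^d)$. I would do this exactly as in the proof of Proposition~\ref{prop:ellmeas} — verify it for $\1_{\coo}$ with $\coo$ closed by approximating $\1_{\coo}$ from above by continuous functions and using dominated convergence, then extend to $b\cee(\RR^d)$ by a monotone-class argument — or, when specializing to the symmetric $\alpha$-stable semigroup of Section~\ref{sec:stable_superprocesses}, simply observe that $h^{-1}\int_0^hT_sf\,{\rm d}s$ is the convolution of $f$ with a probability density converging to $\delta_0$, so the convergence holds at every Lebesgue point of $f$. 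The remaining ingredients (the case analysis, the change of variables, the Tonelli interchange, and the $t\to\infty$ limit) are routine.
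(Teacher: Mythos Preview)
Your argument is correct and follows exactly the paper's route: decompose $\ell_f^t(r+h,\omega^{r+h})$ at time $r$, take $P_{y_r}$-expectations to obtain \eqref{eqn:m33}, then differentiate at $h=0$ and send $t\to\infty$. You are more explicit than the paper---which simply says ``differentiating \eqref{eqn:m33} at $h=0$''---and you rightly flag the delicacy of $\bplim_{s\downarrow0}T_sf=f$ for discontinuous $f$; be aware, though, that your proposed monotone-class and Lebesgue-point fixes yield only a.e.\ convergence (e.g.\ both fail at $x=0$ for $f=\1_{\{0\}}$), so this point---which the paper does not address either---remains a shared loose end rather than a flaw specific to your write-up.
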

\begin{proof}
	We observe that for every path $\omega\in D(\RR^d)$
		\begin{align*}
			\ell_f^t(r+h,\omega^{r+h})&=\int_0^{r\wedge t} f(\omega_s){\rm d}s +\1_{(r<t)} \int_{r}^{(r+h)\wedge t}f(\omega_s){\rm d}s
			\\&=\ell_f^t(r,\omega^r)+\1_{(r<t)}\int_0^{(r+h)\wedge t-r}f(\omega_{r+s}){\rm d}s\,.
		\end{align*}
This implies that
	\begin{align*}
	\TT_h\ell_f^t(r,y)
	&=P_{r,y}\ell_f^t(r+h,(y\ltimes_r \xi)^{r+h})
	\\&=\ell_f^t(r,y) +\1_{(r<t)} P_{y_r}\int_0^{(r+h)\wedge t-r}f(\xi_{s}){\rm d}s\,,
	\end{align*}
which yields \eqref{eqn:m33}.
Equation \eqref{eqn:hag} is obtained by differentiating \eqref{eqn:m33} at $h=0$
and then letting $t\rightarrow\infty$.
\end{proof}

We observe that $Z_0\equiv0$.
In comparison with the occupation time process $Y$ defined in \eqref{def:occupation}, it is easy to derive from \eqref{eqn:hist1stmoment} that for every $f\in b\cee(E)$ and $t\ge0$, $Y_t(f)$ and $Z_t(f)$ have the same mean, that is
\begin{equation*}
	\PP^\phi_\mu Y_t(f)=\PP^\phi_\mu Z_t(f)=\mu\lt(\int_0^tT_sf{\rm d}s\rt)\,.
\end{equation*}
In fact, a deeper relation between $Z$ and $Y$ holds.
\begin{prop}\label{YZrelate} For every $f\in b\cee(\RR^d)$ and $t\ge0$ 
\begin{equation}\label{eqn:YZrelate}
	Z_t(f)=\MM_t(\ell_f)+ Y_t(f)\,,
\end{equation}
where the process $(\MM_t(\ell_f))_{t\ge0}$ is a continuous $(\cgg_t)$-martingale with quadratic variation
\begin{equation}\label{eqn:quadMell}
\langle \MM(\ell_f)\rangle_t=\int_0^t \lt(\XX_s((\ell_f)^2)-\XX_s(\ell_f)^2 \rt)\frac{{\rm d}s}{\phi(s)}\quad\forall t\ge0\,.
\end{equation}
\end{prop}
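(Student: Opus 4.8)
The plan is to obtain both \eqref{eqn:YZrelate} and \eqref{eqn:quadMell} from the historical martingale problem \eqref{eqn:histX}--\eqref{eqn:histM} applied to the \emph{bounded} truncations $\ell_f^u$, and then to erase the truncation using the support property $\supp\XX_s\subset\mathbb E^s$ recorded in Proposition \ref{prop:suppXX} and Remark \ref{rmk:supp}. The process at hand is the $\tau=0$, $\chi=\mu^*$ historical Fleming--Viot process, and since $\mu^*$ is carried by constant paths we have $\ell_f^u(0,\cdot)=\int_0^{0\wedge u}f=0$, hence $\delta_0\times\mu^*(\ell_f^u)=0$ (in particular $Z_0\equiv0$).

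First I would fix $t>0$ and pick any $u>t$. By Lemma \ref{Nov7_18a}, $\ell_f^u\in\cdd(\AA)$ with $\AA\ell_f^u(r,y)=f(y_r)\1_{(r<u)}$, so \eqref{eqn:histX}--\eqref{eqn:histM} give that $\MM_s(\ell_f^u)=\XX_s(\ell_f^u)-\int_0^s\XX_r(\AA\ell_f^u)\,{\rm d}r$ is a continuous $(\cgg_s)$-martingale starting at $0$, with $\langle\MM(\ell_f^u)\rangle_s=\int_0^s(\XX_r((\ell_f^u)^2)-\XX_r(\ell_f^u)^2)\phi(r)^{-1}{\rm d}r$. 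Next I would use that $\supp\XX_r\subset\mathbb E^r$: for $(r,y)\in\mathbb E^r$ with $r<u$ one has $\ell_f^u(r,y)=\int_0^r f(y_\sigma)\,{\rm d}\sigma=\ell_f(r,y)$ and $\AA\ell_f^u(r,y)=f(y_r)=\jmath^* f(r,y)$. Combined with $X_r(f)=\XX_r(\jmath^* f)$ from \eqref{eqn:XXX}, this yields, for every $0\le s\le t$,
\[
\XX_s(\ell_f^u)=\XX_s(\ell_f)=Z_s(f),\qquad \int_0^s\XX_r(\AA\ell_f^u)\,{\rm d}r=\int_0^s X_r(f)\,{\rm d}r=Y_s(f),
\]
and likewise $\XX_r((\ell_f^u)^2)=\XX_r((\ell_f)^2)$, $\XX_r(\ell_f^u)^2=\XX_r(\ell_f)^2$ for $r\le t$. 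Therefore $\MM_s(\ell_f^u)=Z_s(f)-Y_s(f)$ on $[0,t]$, with quadratic variation $\int_0^s(\XX_r((\ell_f)^2)-\XX_r(\ell_f)^2)\phi(r)^{-1}{\rm d}r$ there; everything is finite because $|\ell_f(r,y)|\le r\|f\|_\infty$ on $\mathbb E^r$ and $1/\phi$ is bounded on $[0,t]$.

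Finally, since $t>0$ was arbitrary, I would \emph{define} $\MM_t(\ell_f):=Z_t(f)-Y_t(f)$ and note that this process coincides on $[0,t]$ with the genuine continuous martingale $\MM_\cdot(\ell_f^u)$ for any $u>t$; hence $(\MM_t(\ell_f))_{t\ge0}$ is a continuous $(\cgg_t)$-martingale starting at $0$, and reading off $\langle\MM(\ell_f^u)\rangle$ with $u>t$ gives $\langle\MM(\ell_f)\rangle_t=\int_0^t(\XX_s((\ell_f)^2)-\XX_s(\ell_f)^2)\phi(s)^{-1}{\rm d}s$. This is precisely \eqref{eqn:YZrelate} together with \eqref{eqn:quadMell}.

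The only genuine obstacle is that $\ell_f$ is unbounded, so it is not in $\cdd(\AA)$ and cannot be fed directly into the martingale problem; the truncation-plus-support argument is exactly what bypasses this. The one point requiring care is that the truncation level $u$ must be taken \emph{strictly} larger than the time horizon $t$, so that $\1_{(r<u)}\equiv1$ on the supports of $\XX_r$ for all $r\le t$ and $\AA\ell_f^u$ genuinely restricts to $\jmath^* f$; the remaining steps are bookkeeping with the projection $\jmath$ and the identities \eqref{eqn:histX}--\eqref{eqn:histM} and \eqref{eqn:XXX} already established.
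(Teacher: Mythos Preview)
Your argument is correct and is essentially the paper's own approach: the paper has already recorded in the introduction (around \eqref{Aforellf}--\eqref{XMforellf}) that the martingale problem \eqref{eqn:histX}--\eqref{eqn:histM} extends to $\ell_f$ precisely via the truncation $\ell_f^u$ and the support property of Proposition~\ref{prop:suppXX}/Remark~\ref{rmk:supp}, so its proof of Proposition~\ref{YZrelate} simply applies \eqref{eqn:histX} to $\ell_f$ and invokes $\AA\ell_f=\jmath^*f$ from \eqref{eqn:hag}. You have unpacked that same justification inside the proof rather than citing it, including the care that $u>t$ is needed so that $\1_{(r<u)}\equiv 1$ on $\supp\XX_r$ for $r\le t$; the content is identical.
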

\begin{proof}
We deduce from \eqref{eqn:histX} that
\begin{equation*}
	\MM_t(\ell_f)=\XX_t(\ell_f)-\XX_0(\ell_f)-\int_0^t\XX_s(\AA\ell_f){\rm d}s\,.
\end{equation*}
Moreover, $\XX_0(\ell_f)=0$ and from \eqref{eqn:hag}, we have for every $s$,
\begin{align*}
	\XX_s(\AA\ell_f)
	=\int_{E}f(y_r)\mathrm{d}\XX_s(r,y)=\XX_s(\jmath^* f)=X_s(f)\,,
\end{align*}
in view of \eqref{eqn:XXX}. 
This yields \eqref{eqn:YZrelate}. 
$(\MM_t(\ell_f))_{t\ge0}$ is a $(\cgg_t)$-martingale by \eqref{eqn:histM}.	\end{proof}

		In relation \eqref{eqn:YZrelate}, if the long term asymptotics of any two among three quantities are known, then, this implies the long term asymptotic of the other term. 
Since $\MM_t(\ell_f)$ is a martingale, its analysis is subjected to martingale limit theorems. Depending on the situation at hand, the asymptotic of one of $Y_t$ and $Z_t$ is easier than the other. 
This is the case for $\alpha$-stable Fleming-Viot process considered in Section \ref{sec:occupation_times_of_fleming_viot_super_stable_processes} below.

\section{Stable Fleming-Viot processes} 
\label{sec:stable_superprocesses}
	Hereafter, we consider the specific case when $A=-(-\Delta)^{\frac \alpha2}$ on $\RR^d$ for some $\alpha\in(0,2]$. 
The historical $\alpha$-stable generator is still denoted by $\AA$. The motion of each particle has the law of the $\alpha$ stable process in $\RR^d$. 
The associated superprocess $(X_t)_{t\ge0}$ constructed in Theorem \ref{thm.Pmf} is called $\alpha$-stable Fleming-Viot process. 
The associated historical superprocess $(\XX_t)_{t\ge0}$ with law $\PP^\phi_{0,\mu^*}$ constructed in Subsection \ref{sub:historical_processes} is called historical $\alpha$-stable Fleming-Viot process. 
The relation \eqref{eqn:XXX} describes the connection between $X$ and $\XX$. 
In this section, we develop several intermediate results following the guideline described in Remark \ref{rem:procedure}. 
These considerations eventually lead to the proofs of Theorems \ref{thm:X0vague} and \ref{thm:stbSLLn} stated in the Introduction.

\subsection{The stable semigroup} 
\label{sub:stable semigroup}
	Let $T_t$ be the semigroup corresponding to a symmetric $\alpha$-stable process. In particular, for each test function $f$, 
\begin{equation}\label{eqn:StbPt}
	T_tf(x)=\int_{\RR^d}p_t(x-y)f(y)\mathrm{d}y\,,
\end{equation}
where 
\begin{equation}\label{eqn:density}
	p_t(x)=\frac{1}{(2 \pi)^d}\int_{\RR^d}e^{i x\cdot \theta}e^{-t|\theta|^\alpha} {\rm d} \theta\,.
\end{equation}
Let $\hat f$ be the Fourier transform of $f$, $\hat f(\theta)=\int_{\RR^d}e^{-i \theta\cdot x}f(x){\rm d}x$. 
Using Fourier transform, $T_tf$ takes an alternative form
\begin{equation}\label{eqn:StbPhat}
	T_tf(x)=\frac1{(2 \pi)^d}\int_{\RR^d}e^{ix\cdot \theta-t|\theta|^\alpha}\hat f(\theta){\rm d} \theta\,.
\end{equation}
We have seen in Subsection \ref{sub:long_term_asymptotics} that the long term asymptotic of $X_t(f)$ depends upon that of $T_tf$. It is therefore natural to study $T_tf$ as $t\to\infty$ for a given test function $f$. 
If $k=(k_1,\dots,k_d)\in\NN^d$ is a multi-index, we define $\partial^kf=\partial_{1}^{k_1}\partial_2^{k_1}\cdots\partial_d^{k_d}f$.
\begin{proposition}[Semigroup expansion]\label{prop:Tt}
	Let $f$ be a bounded measurable function on $\RR^d$ and $N$ be a non-negative integer such that \eqref{con:fxN} holds.
	Then, we have
	\begin{align}\label{stb:Pexpansion}
		\lim_{t\to\infty}t^{\frac {N+d}\alpha}\sup_{x\in\RR^d} \lt|T_tf(x)-\sum_{k\in\NN^d:|k|\le N}\frac{(-1)^{|k|}}{k!}\int_{\RR^d}f(y)y^{k}\mathrm{d}y\partial^{k}p_t(x)\rt|=0\,.
	\end{align}
\end{proposition}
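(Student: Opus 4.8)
The plan is to Taylor-expand the kernel $p_t(x-y)$ of order $N$ in the $y$-variable about $y=0$ and integrate against $f$. Observe first that \eqref{con:fxN} together with the boundedness of $f$ forces $f\in L^1(\RR^d)$: if $N\ge1$ then $\int_{|y|\ge1}|f(y)|\d y\le\int_{|y|\ge1}|f(y)||y|^N\d y<\infty$ while $\int_{|y|<1}|f(y)|\d y\le\|f\|_\infty\,\mathrm{vol}(B_1)<\infty$, and the case $N=0$ is \eqref{con:fxN} itself; consequently $\int_{\RR^d}|y|^{|k|}|f(y)|\d y<\infty$ for every $|k|\le N$. Since $p_t$ and each $\partial^kp_t$ are bounded, every term below is absolutely integrable against $f$ and may be combined under a single integral:
\begin{equation*}
	T_tf(x)-\sum_{|k|\le N}\frac{(-1)^{|k|}}{k!}\Big(\int_{\RR^d}f(y)y^k\d y\Big)\partial^kp_t(x)=\int_{\RR^d}R_t(x,y)f(y)\d y,
\end{equation*}
where $R_t(x,y):=p_t(x-y)-\sum_{|k|\le N}\frac{(-1)^{|k|}}{k!}y^k\partial^kp_t(x)$ is the order-$N$ Taylor remainder of $z\mapsto p_t(x-z)$ at $z=0$. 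It thus suffices to prove $t^{\frac{N+d}\alpha}\sup_{x}\int_{\RR^d}|R_t(x,y)||f(y)|\d y\to0$.

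Next I would record two properties of $p_t$ coming from \eqref{eqn:density}. Differentiating under the integral sign is legitimate because $\int_{\RR^d}|\theta|^m e^{-|\theta|^\alpha}\d\theta<\infty$ for every $m\ge0$; hence $p_1\in C^\infty(\RR^d)$, every derivative $\partial^kp_1$ is bounded, and therefore (having a bounded gradient) uniformly continuous on $\RR^d$. Moreover $\alpha$-self-similarity $p_t(x)=t^{-d/\alpha}p_1(t^{-1/\alpha}x)$ (substitute $\theta=t^{-1/\alpha}\xi$ in \eqref{eqn:density}) gives $\partial^kp_t(x)=t^{-(d+|k|)/\alpha}\partial^kp_1(t^{-1/\alpha}x)$. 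Plugging these into $R_t$ and setting $u=t^{-1/\alpha}x$, $v=t^{-1/\alpha}y$ yields the exact identity $R_t(x,y)=t^{-d/\alpha}R_1(u,v)$, so that
\begin{equation*}
	t^{\frac{N+d}\alpha}\int_{\RR^d}|R_t(x,y)||f(y)|\d y=t^{\frac N\alpha}\int_{\RR^d}\big|R_1\big(t^{-1/\alpha}x,t^{-1/\alpha}y\big)\big|\,|f(y)|\d y.
\end{equation*}

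For the remainder I would invoke the integral form of Taylor's theorem at order $N$: for $N\ge1$,
\begin{equation*}
	R_1(u,v)=\sum_{|k|=N}\frac{(-1)^{|k|}}{k!}v^k\,N\int_0^1(1-s)^{N-1}\big(\partial^kp_1(u-sv)-\partial^kp_1(u)\big)\d s
\end{equation*}
(for $N=0$ this reads $R_1(u,v)=p_1(u-v)-p_1(u)$). Setting $\eta(\delta):=\sup_{|k|=N}\sup_{u\in\RR^d}\sup_{|w|\le\delta}|\partial^kp_1(u-w)-\partial^kp_1(u)|$ for $N\ge1$ and $\eta(\delta):=\sup_{u}\sup_{|w|\le\delta}|p_1(u-w)-p_1(u)|$ for $N=0$, the displayed formula and $N\int_0^1(1-s)^{N-1}\d s=1$ give $|R_1(u,v)|\le C_N|v|^N\eta(|v|)$ for all $u,v$, with $C_N=d^N/N!$; and $\eta$ is nondecreasing, bounded, with $\eta(\delta)\to0$ as $\delta\downarrow0$ by uniform continuity. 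Hence
\begin{equation*}
	t^{\frac N\alpha}\int_{\RR^d}\big|R_1\big(t^{-1/\alpha}x,t^{-1/\alpha}y\big)\big|\,|f(y)|\d y\le C_N\int_{\RR^d}|y|^N\,\eta\big(t^{-1/\alpha}|y|\big)\,|f(y)|\d y,
\end{equation*}
a bound independent of $x$. The integrand is dominated by $C_N\|\eta\|_\infty|y|^N|f(y)|\in L^1(\RR^d)$ (by \eqref{con:fxN}) and tends to $0$ pointwise as $t\to\infty$, so dominated convergence yields \eqref{stb:Pexpansion}.

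The main obstacle is precisely that $f$ carries only a moment of order $N$, not $N+1$: the order-$(N+1)$ Taylor remainder would deliver a clean power of $t$ but would require $\int|y|^{N+1}|f(y)|\d y<\infty$. Instead, the decay must be extracted from the modulus of continuity $\eta$ of the top-order derivatives of $p_1$, which converts the conclusion into a dominated-convergence argument rather than a quantitative estimate. The remaining steps are routine: the justification of differentiation under the integral in \eqref{eqn:density} and of the recombination of terms in the first display, both of which rest on the finiteness of $\int|\theta|^m e^{-|\theta|^\alpha}\d\theta$ and of the moments $\int|y|^{|k|}|f(y)|\d y$, $|k|\le N$, established at the outset.
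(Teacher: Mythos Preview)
Your proof is correct and takes a genuinely different route from the paper's. The paper works on the Fourier side: it writes $t^{d/\alpha}T_tf(x)=(2\pi)^{-d}\int e^{it^{-1/\alpha}x\cdot\theta-|\theta|^\alpha}\hat f(t^{-1/\alpha}\theta)\d\theta$, Taylor-expands $\hat f$ to order $N$ about the origin (the moment condition \eqref{con:fxN} guarantees $\hat f\in C^N$ with bounded derivatives), identifies the polynomial terms with $\partial^k p_t$ via $\partial^k\hat f(0)=(-i)^{|k|}\int f(y)y^k\d y$, and kills the remainder by dominated convergence against $e^{-|\theta|^\alpha}$. You instead stay in physical space: you Taylor-expand $y\mapsto p_t(x-y)$, use the $\alpha$-self-similarity $p_t=t^{-d/\alpha}p_1(t^{-1/\alpha}\cdot)$ to reduce to $t=1$, and bound the order-$N$ integral remainder of $p_1$ via the uniform modulus of continuity of its top-order derivatives, leaving a dominated-convergence argument against $|y|^N|f(y)|$.

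The two arguments are Fourier duals of one another and of comparable length. The paper's version meshes with the Fourier machinery used elsewhere in Section~\ref{sec:stable_superprocesses} (e.g.\ Proposition~\ref{prop:Xp}), so it keeps the exposition uniform. Your version is slightly more self-contained --- it never passes through $\hat f$ --- and isolates exactly which structural features of the kernel are used (scaling plus $C^{N+1}$-smoothness of $p_1$ with bounded derivatives), so it would transfer verbatim to any self-similar transition density with those properties, even where a clean Fourier representation is unavailable. Your remark that the absence of an $(N+1)$-st moment forces one to squeeze the decay out of a modulus of continuity rather than a power of $t$ is exactly the point; the paper's $|u|^{-N}R_N(u)\to0$ plays the same role on the Fourier side.
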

\begin{proof}
	We begin with a rescaled version of \eqref{eqn:StbPhat}
	\begin{align}\label{eqn:StbscalP}
		t^{d/\alpha}T_tf(x)=\frac1{(2 \pi)^d} \int_{\RR^d}e^{it^{-1/\alpha}x\cdot \theta-|\theta|^\alpha}\hat f(t^{-1/\alpha}\theta){\rm d} \theta\,,
	\end{align}
	The condition \eqref{con:fxN} ensures that the derivative $\partial^k \hat f$ exists and is continuous and bounded for every multi-index $k$ such that $|k|\le N$.
	Hence, we have the following Taylor's expansion for $\hat f(u)$ around $u=0$,
	\begin{equation}\label{tmp.ftayl}
		\hat f(u)=\sum_{|k|\le N}\frac{\partial^{k}\hat f(0)}{k!}u^{k}+R_N(u)\,.
	\end{equation}
	The remainder term satisfies 
\begin{equation}\label{eqn:R1}
 \lim_{u\to0}|u|^{-N}|R_N(u)|=0\quad\mbox{and}\quad \sup_{u\in\RR^d\setminus\{0\}} \frac{|R_N(u)|}{|u|^{N}}=O(1)  \,.
\end{equation}	 
The second estimate in \eqref{eqn:R1} comes from the first estimate, \eqref{tmp.ftayl} and the fact that $\hat f$ is bounded.
	Hence, we can rewrite the right-hand side of \eqref{eqn:StbscalP} as follows:
	\begin{align*}
		\sum_{|k|\le N} \frac{\partial^k \hat f(0)}{k!} \frac1{(2 \pi)^d}\int_{\RR^d}e^{it^{-1/\alpha} x\cdot \theta-|\theta|^\alpha}(t^{-1/\alpha}\theta)^{k}{\rm d} \theta
		+ \frac1{(2 \pi)^d}\int_{\RR^d}e^{it^{-1/\alpha} x\cdot \theta-|\theta|^\alpha}R_N(t^{-1/\alpha}\theta) {\rm d} \theta\,.
	\end{align*}
	Taking into account the facts that
	\begin{equation*}
		\frac1{(2 \pi)^d}\int_{\RR^d}e^{it^{-1/\alpha} x\cdot \theta-|\theta|^\alpha}(t^{-1/\alpha}\theta)^{ k}{\rm d} \theta=i^{-|k|} t^{d/\alpha}\partial^k p_t(x)
	\end{equation*}
	and
	\begin{equation}\label{id:fk0}
	 	\partial^k \hat f(0)=(-i)^{|k|}\int_{\RR^d}f(y)y^{ k}\mathrm{d}y\,,
	\end{equation}
	we obtain
	\begin{align*}
		t^{d/\alpha}T_tf(x)
		&=\frac1{(2 \pi)^d} \int_{\RR^d}e^{it^{-1/\alpha}x\cdot \theta-|\theta|^\alpha}\hat f(t^{-1/\alpha}\theta){\rm d} \theta
		\\&=t^{d/\alpha}\sum_{|k|\le N}\frac{(-1)^{|k|}}{k!}\int_{\RR^d}f(y)y^{ k}\mathrm{d}y\ \partial^k p_t(x)+\tilde R_N(x)\,,
	\end{align*}
	where
	\begin{align*}
		\tilde R_N(x)=
		\frac1{(2 \pi)^d}\int_{\RR^d}e^{it^{-1/\alpha} x\cdot \theta-|\theta|^\alpha}R_N(t^{-1/\alpha}\theta) {\rm d} \theta\,.
	\end{align*}
Hence, it remains to show $\lim_{t\to\infty}t^{\frac N \alpha}\|\tilde R_N\|_\infty=0$. In fact, we have
\begin{equation*}
	t^{\frac N \alpha} \sup_{x\in\RR^d} |\tilde R_N(x)|\lesssim \int_{\RR^d}e^{-|\theta|^\alpha} t^{\frac N \alpha} |R_N(t^{-\frac1\alpha}\theta)| {\rm d} \theta\,,
\end{equation*}
which converges to 0 as $t\to\infty$ by dominated convergence theorem and \eqref{eqn:R1}.
(Here and below, we use $\lesssim$ in the standard way:
$A\lesssim B$ means there exists a constant $C>0$ such that $A\le CB$.)
\end{proof}

As an immediate consequence, the stable semigroup $T_t$ satisfies \eqref{con:Pct} with $c(t)=t^{(N+d)/\alpha}$ and
\begin{equation}\label{stb:Lt}
	L_tf=\sum_{|k|\le N}\frac{(-1)^{|k|}}{k!}\int_{\RR^d}f(y)y^{ k}\mathrm{d}y\partial^{ k}p_{t}\,.
\end{equation}
In view of Proposition \ref{thm:SLLN} and \eqref{stb:Lt}, the long term asymptotic of $X_t(f)$  is reduced to the long term asymptotic along a sequence of
\begin{align*}
	X_{\rho(t)}(\partial^{ k}p_{t- \rho(t)})\,,\quad k\in\NN^d\,,|k|\le N\,,
\end{align*}
which we will describe in Subsection \ref{sub:limit_theorems_for_super_stable_processes}. 

\subsection{Limit theorems for super stable processes} 
	\label{sub:limit_theorems_for_super_stable_processes}
	For each $\theta\in\RR^d$, we denote $\e_\theta(x)=e^{i \theta\cdot x}$, $\cos_\theta(x)=\cos(\theta\cdot x)$ and $\sin_\theta(x)=\sin(\theta\cdot x)$ and recall the definition of $\vartheta^k_{d,\alpha}$ in \eqref{def:constTheta}.
	\begin{proposition}\label{prop:Xp}
		Let $\rho$ be a sublinear function such that $\lim_{t\to\infty}\frac{\rho(t)}{t^{1- \varepsilon_0}}=0$ for some $\varepsilon_0>0$.
		Suppose that $\phi$ satisfies
		\begin{equation}\label{con:phi}
		 	\int_0^\infty\frac{{\rm d}s}{\phi(s)}<\infty
		\end{equation} 
		and $\mu$ satisfies \eqref{con.mu}.
		With $\PP^\phi_{\mu}$-probability one, we have for every $k\in\NN^d$ that
		\begin{equation}\label{stb:pk}
			\lim_{t\to\infty}t^{\frac{d+|k|}\alpha} X_{\rho(t)}(\partial^kp_{t- \rho(t)})=
			\lt\{
			\begin{array}{ll}
				0& \quad\mbox{if }|k|\mbox{ is odd}\\
				(-1)^{\frac {|k|}2} \vartheta^k_{d,\alpha} & \quad\mbox{if }|k|\mbox{ is even}\,.
			\end{array}
			\rt.
		\end{equation}
	\end{proposition}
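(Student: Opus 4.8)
The plan is to use the Green function representation \eqref{green.X} to split $X_{\rho(t)}(\partial^k p_{t-\rho(t)})$ into a deterministic mean plus a stochastic‑integral martingale, to evaluate the mean by a Fourier rescaling, and to control the martingale by an $L^2$‑estimate whose decay rests on the identity $M^X(\1)\equiv0$.

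First I would note that, since $T_s$ is convolution by $p_s$, it commutes with $\partial^k$ and satisfies $T_s p_u=p_{s+u}$, so $T_{\rho(t)-r}\partial^k p_{t-\rho(t)}=\partial^k p_{t-r}$, in particular $T_{\rho(t)}\partial^k p_{t-\rho(t)}=\partial^k p_t$. As $\partial^k p_u\in b\cee(\RR^d)$ for $u>0$, \eqref{green.X} gives $X_{\rho(t)}(\partial^k p_{t-\rho(t)})=\mu(\partial^k p_t)+N_t$ with $N_t:=\int_0^{\rho(t)}\int_{\RR^d}\partial^k p_{t-r}(z)\,\d M^X(r,z)$, and it suffices to find the limits of $t^{\frac{d+|k|}\alpha}\mu(\partial^k p_t)$ and of $t^{\frac{d+|k|}\alpha}N_t$. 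Differentiating \eqref{eqn:density} and rescaling $\theta\mapsto t^{-1/\alpha}\theta$ yields $t^{\frac{d+|k|}\alpha}\partial^k p_t(x)=\frac1{(2\pi)^d}\int_{\RR^d}(i\theta)^k e^{it^{-1/\alpha}x\cdot\theta-|\theta|^\alpha}\,\d\theta$, which is bounded uniformly in $(t,x)$ and, for each fixed $x$, tends to $c_k:=\frac{i^{|k|}}{(2\pi)^d}\int_{\RR^d}\theta^k e^{-|\theta|^\alpha}\,\d\theta$. Since $\int_{\RR^d}\theta^k e^{-|\theta|^\alpha}\,\d\theta=0$ unless every $k_j$ is even, $c_k=0$ when $|k|$ is odd and $c_k=(-1)^{|k|/2}\vartheta^k_{d,\alpha}$ when $|k|$ is even --- exactly the claimed value. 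Dominated convergence against the probability measure $\mu$ then gives $t^{\frac{d+|k|}\alpha}\mu(\partial^k p_t)\to c_k$; estimating $|e^{iu}-1|\lesssim|u|^b$ and invoking \eqref{con.mu} even yields the rate $|t^{\frac{d+|k|}\alpha}\mu(\partial^k p_t)-c_k|\lesssim t^{-b/\alpha}$ for any $b\in(0,1]$ with $b\le a$.

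To handle the martingale term I would first exploit that $X_s(\1)\equiv1$ forces $M^X_s(\1)\equiv0$ by \eqref{eqn:MX}, so that $\int_0^{\rho(t)}\int_{\RR^d}c_k\,\d M^X(r,z)=c_k M^X_{\rho(t)}(\1)=0$ and hence $t^{\frac{d+|k|}\alpha}N_t=\int_0^{\rho(t)}\int_{\RR^d}h^{(t)}_r(z)\,\d M^X(r,z)$ with $h^{(t)}_r(z):=t^{\frac{d+|k|}\alpha}\partial^k p_{t-r}(z)-c_k$. Rescaling as above with $t-r$ in place of $t$ (and using $t-r\ge t/2$ for $r\le\rho(t)$, $t$ large, by \eqref{con:rho}) gives, on $r\in[0,\rho(t)]$, the pointwise bound $|h^{(t)}_r(z)|\lesssim \rho(t)/t+t^{-b/\alpha}|z|^b$, where now $b\in(0,1]$ is chosen small enough that $2b<\alpha$ and $2b\le a$. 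Then \eqref{cov.X} and \eqref{eqn:1stXmoment} bound $\EE\big[(t^{\frac{d+|k|}\alpha}N_t)^2\big]$ by $\int_0^{\rho(t)}\mu\big(T_r((h^{(t)}_r)^2)\big)\frac{\d r}{\phi(r)}$; since the $\alpha$‑stable semigroup satisfies $\big(T_r|\cdot|^{2b}\big)(x)\lesssim|x|^{2b}+r^{2b/\alpha}$ (finite because $2b<\alpha$) and $\int_{\RR^d}|x|^{2b}\,\mu(\d x)<\infty$ by \eqref{con.mu}, this is $\lesssim(\rho(t)/t)^2\int_0^{\rho(t)}\frac{\d r}{\phi(r)}+t^{-2b/\alpha}\int_0^{\rho(t)}\frac{1+r^{2b/\alpha}}{\phi(r)}\,\d r$. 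Bounding $\int_0^{\rho(t)}\frac{\d r}{\phi(r)}$ by a constant via \eqref{con:phi}, using $r^{2b/\alpha}\le\rho(t)^{2b/\alpha}$ on $[0,\rho(t)]$ and $\rho(t)\le t^{1-\varepsilon_0}$ for large $t$, both terms are $O(t^{-\delta})$ for some $\delta>0$; together with the deterministic rate this gives $\EE\big[(t^{\frac{d+|k|}\alpha}X_{\rho(t)}(\partial^k p_{t-\rho(t)})-c_k)^2\big]\lesssim t^{-\delta'}$ for some $\delta'>0$.

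Finally I would upgrade this to almost‑sure convergence. Picking a lattice $t_n\uparrow\infty$ with $\sum_n t_n^{-\delta'}<\infty$ and $t_{n+1}/t_n\to1$ (e.g.\ $t_n=n^p$, $p\delta'>1$), Borel--Cantelli gives $t_n^{\frac{d+|k|}\alpha}X_{\rho(t_n)}(\partial^k p_{t_n-\rho(t_n)})\to c_k$ a.s.; and since the deterministic part $t^{\frac{d+|k|}\alpha}\mu(\partial^k p_t)$ already converges for real $t$, one is left to show $\sup_{t\in[t_n,t_{n+1}]}|t^{\frac{d+|k|}\alpha}N_t-t_n^{\frac{d+|k|}\alpha}N_{t_n}|\to0$ a.s., which follows by comparing $X_{\rho(t)}$ with $X_{\rho(t_n)}$ through \eqref{eqn:xxp}, applying Doob's maximal inequality to the resulting martingale and a further Borel--Cantelli step, exactly in the spirit of the proof of Proposition \ref{thm:SLLN}, the residual deterministic error being summable because $t_{n+1}/t_n\to1$ and $\partial^k p_\cdot$ is smooth. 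I expect the genuinely delicate point to be the cancellation used above: before subtracting $c_k$, the integrand of $N_t$ is $\approx c_k\,t^{-(d+|k|)/\alpha}$ uniformly for $r\le\rho(t)$, so $\EE[(t^{\frac{d+|k|}\alpha}N_t)^2]$ is merely $O(1)$ and the Borel--Cantelli argument does not close; recognizing that this $O(1)$‑sized contribution equals $c_k M^X_{\rho(t)}(\1)=0$ and then extracting the decay from the remainder $h^{(t)}_r$ --- which is precisely where the sublinearity exponent $\varepsilon_0$ and the moment hypothesis \eqref{con.mu} get consumed --- is the heart of the matter.
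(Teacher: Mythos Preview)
Your argument is essentially correct and takes a genuinely different route from the paper. The paper does not use the Green function representation directly on $\partial^k p_{t-\rho(t)}$; instead it writes $X_{\rho(t)}(\partial^k p_{t-\rho(t)})$ as a Fourier integral $\frac1{(2\pi)^d}\int e^{-(t-\rho(t))|\theta|^\alpha}X_{\rho(t)}(\e_\theta)(i\theta)^k\,\d\theta$, then expands $X_{\rho(t)}(\e_\theta)$ via the martingale problem \eqref{eqn:MX} into $\mu(\e_\theta)$, a drift integral $-|\theta|^\alpha\int_0^{\rho(t)}X_s(\e_\theta)\,\d s$, and $M^X_{\rho(t)}(\e_\theta)$, giving three terms $I_1,I_2,I_3$. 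For $I_3$ the paper keeps the Fourier variable $\theta$ separated and applies Doob's inequality to the genuine martingale $s\mapsto M^X_s(\e_\theta)$ for each $\theta$, using a small-$|\theta|$ bound $\PP^\phi_\mu|M^X_t(\e_\theta)|^2\lesssim|\theta|^{2\wedge a}+t|\theta|^\alpha$ derived from \eqref{con.mu}. Your route avoids the drift piece $I_2$ entirely and replaces the small-$|\theta|$ estimate by the cancellation $M^X(\1)\equiv0$ together with the pointwise bound on $h^{(t)}_r$; the moment hypothesis \eqref{con.mu} enters instead through $T_r|\cdot|^{2b}$.

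What the paper's decomposition buys is a cleaner continuous-time step: since the only randomness in $I_3$ sits in $M^X_{\rho(t)}(\e_\theta)$, which \emph{is} a martingale in $t$ (via $\rho$ increasing), Jensen plus Doob handle the supremum over $[a_n,a_{n+1}]$ in one stroke. In your version $t^{\frac{d+|k|}\alpha}N_t=\int_0^{\rho(t)}\!\int h^{(t)}_r\,\d M^X$ has both a $t$-dependent upper limit and a $t$-dependent integrand, so it is not itself a martingale; your sketch for the lattice-to-continuous passage is plausible but will require an extra splitting (freeze the integrand at $t_n$, apply Doob to the resulting martingale on $[\rho(t_n),\rho(t_{n+1})]$, and bound $\|h^{(t)}_r-h^{(t_n)}_r\|_\infty$ deterministically). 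What your approach buys is conceptual economy: the observation that the $O(1)$ part of the stochastic integrand is a constant and hence contributes $c_kM^X_{\rho(t)}(\1)=0$ is an elegant device that sidesteps the Fourier-side bookkeeping.
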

	\begin{proof}
		We note that for every function $f\in L^1(\RR^d)$, by Fubini's theorem,
		\begin{equation}\label{eqn:fourierX}
			X_t(f)=\frac1{(2 \pi)^d}\int_{\RR^d}X_t(\e_\theta)\hat f(\theta){\rm d} \theta\,.
		\end{equation}
		Hence,
		\begin{align*}
			X_{\rho(t)}(\partial^kp_{t- \rho(t)})
			&=\frac1{(2 \pi)^d}\int_{\RR^d}e^{-(t- \rho(t))|\theta|^\alpha}X_{\rho(t)}(\e_\theta)(i \theta)^{ k} {\rm d} \theta\,.
		\end{align*}
In addition, from \eqref{eqn:MX}, we obtain
\begin{equation}\label{tmp:xe}
	X_{\rho(t)}(\e_{\theta})=\mu(\e_{\theta})-|\theta|^\alpha\int_0^{\rho(t)}X_s(\e_{\theta}){\rm d}s+M^X_{\rho(t)}(\e_{\theta})\,.
\end{equation}
		It follows that
		\begin{equation*}
			X_{\rho(t)}(\partial^kp_{t- \rho(t)})=I_1+I_2+I_3\,,
		\end{equation*}
		where
		\begin{align*}
			I_1&=\frac1{(2 \pi)^d}\int_{\RR^d}e^{-(t- \rho(t))|\theta|^\alpha}\mu(\e_\theta)(i \theta)^{ k} {\rm d} \theta\,,
			\\I_2&=-\frac1{(2 \pi)^d}\int_{\RR^d}e^{-(t- \rho(t))|\theta|^\alpha}\int_0^{\rho(t)}X_s(\e_{\theta}){\rm d}s |\theta|^\alpha (i \theta)^{ k} {\rm d} \theta\,,
			\\I_3&=\frac1{(2 \pi)^d}\int_{\RR^d}e^{-(t- \rho(t))|\theta|^\alpha}M^X_{\rho(t)}(\e_\theta)(i \theta)^{ k} {\rm d} \theta\,.
		\end{align*}
We will show that
\begin{align}
	\lim_{t\to\infty}t^{\frac{d+|k|}\alpha} I_1&=\frac1{(2 \pi)^d}\int_{\RR^d}e^{-|\theta|^\alpha}(i \theta)^k {\rm d} \theta\ \mbox{ a.s. },\label{tmp:I1}
	\\\lim_{t\to\infty}t^{\frac{d+|k|}\alpha} I_2&=0
\mbox{ a.s. }	\quad\textrm{and}\quad
	\lim_{t\to\infty}t^{\frac{d+|k|}\alpha} I_3=0\ \mbox{ a.s. }\label{tmp:I23}
\end{align}
By a change of variable, we see that
\begin{equation*}
	I_1=t^{-\frac{d+|k|}\alpha} \frac1{(2 \pi)^d}\int_{\RR^d}e^{-(1- \frac{\rho(t)}t)|\theta|^\alpha}\mu(\e_{t^{-1/\alpha} \theta})(i \theta)^{ k} {\rm d} \theta\,.
\end{equation*}
		This, together with dominated convergence theorem yields \eqref{tmp:I1}. For $I_2$, we observe that
		\begin{align*}
			|I_2|&\lesssim \rho(t)\int_{\RR^d}e^{-(t- \rho(t))|\theta|^\alpha} |\theta|^{|k|+\alpha} {\rm d} \theta
			\\&\lesssim \frac{\rho(t)}tt^{-\frac{d+|k|}\alpha}\int_{\RR^d}e^{-(1- \frac{\rho(t)}t)|\theta|^\alpha} |\theta|^{|k|+\alpha} {\rm d} \theta\,,
		\end{align*}
		which due to sublinearity of $\rho$ immediately implies the first assertion in \eqref{tmp:I23}.
		For $I_3$, putting $a_n=e^n$ and utilizing the Borel-Cantelli lemma, we merely need to show
		\begin{equation}\label{tmp:pre322}
			\sum_{n\ge1}\PP^\phi_\mu\lt(\sup_{a_n\le t\le a_{n+1}} t^{\frac{d+|k|}\alpha}|I_3|\rt)^2<\infty\,.
		\end{equation}
Set $\rho_n=\rho(a_n)$ and note by a change of variables that
\begin{equation*}
		\int_{\RR^d}e^{-(a_n- \rho_{n+1})|\theta|^\alpha}|\theta|^{|k|}{\rm d} \theta
		\lesssim a_n^{-\frac{d+|k|}\alpha}\,.
\end{equation*} 
By Jensen's inequality, we have
\begin{align}\label{tmp:estI3}
	\PP^\phi_\mu\lt(\sup_{a_n\le t\le a_{n+1}} t^{\frac{d+|k|}\alpha}|I_3|\rt)^2
	\lesssim \lt(\frac{a_{n+1}^2}{a_n}\rt)^{\frac{d+|k|}\alpha} \int_{\RR^d}e^{-(a_n- \rho_{n+1})|\theta|^\alpha}\PP^\phi_\mu\sup_{a_n\le t\le a_{n+1}}| M^X_{\rho(t)}(\e_{\theta})|^2|\theta|^{|k|}{\rm d} \theta.
\end{align}
		For each $\theta\in\RR^d$, $(M_t(\e_{\theta}))_{t\ge0}$
		is a complex valued martingale with quadratic variations satisfying
		\begin{align*}
		\langle\Re M(\e_\theta)\rangle_{t} & =
		 \int _{0}^{t}
		 \left[X_{s}\left(\cos^{2}_{\theta }\right)-X_{s}^{2}
		\left(\cos _{\theta }\right)\right]\frac{{\rm d}s}{\phi(s)}\le\int_0^t X_s((1-\cos_ \theta)^2)\frac{{\rm d}s}{\phi(s)} \,,
		\\
		\langle\Im M(\e_\theta)\rangle_{t}
		& =  \int _{0}^{t}
		 \left[X_{s}\left(\sin^{2}_{\theta }\right)-X_{s}^{2}\left(\sin _{\theta }\right)
		\right]\frac{{\rm d}s}{\phi(s)}\le\int_0^tX_s(\sin_\theta^2)\frac{{\rm d}s}{\phi(s)} \,.
		\end{align*}
		Hence, using the elementary identity $1-\cos_\theta=2\sin^2_{\theta/2}$, we obtain
		\begin{align*}
			\PP^\phi_\mu| M^X_t(\e_{\theta})|^2
			&\lesssim\int_0^t \PP^\phi_\mu X_s(\sin^4_{\theta/2}+\sin^2_{\theta})\frac{{\rm d}s}{\phi(s)}
			\\&\lesssim\int_0^t \langle T_s(\sin^4_{\theta/2}+\sin^2_{\theta}),\mu \rangle\frac{{\rm d}s}{\phi(s)}\,.
		\end{align*}
Note that for every $x\in\RR^d$
\begin{align*}
	2T_s\sin^2_\theta(x)&=1-\cos_{2 \theta}(x)e^{-s|2\theta|^\alpha}=(1-\cos_{2 \theta}(x))e^{-s|2\theta|^\alpha}+1-e^{-s|2\theta|^\alpha}
	\\&\lesssim (1\wedge|\theta||x|)^2+s|\theta|^\alpha\,.
\end{align*}
Similarly, $4T_s\sin^4_{\frac{\theta}2}(x)=T_s(1-2\cos_\theta+\cos^2_\theta)\le2T_s(1-\cos_\theta)\lesssim(1\wedge|\theta||x|)^2+s|\theta|^\alpha$.
Using \eqref{con:phi} and \eqref{con.mu}, it follows that
\begin{align}\label{tmp:M2nd}
	\PP^\phi_\mu| M^X_t(\e_{\theta})|^2
	\lesssim |\theta|^{2\wedge a}+ t|\theta|^\alpha\,.
\end{align}
By martingale maximal inequality
\begin{align}\label{est:maxMe}
	\PP^\phi_\mu\sup_{a_n\le t\le a_{n+1}}| M^X_{\rho(t)}(\e_{\theta})|^2\lesssim\PP^\phi_\mu |M^X_{\rho_{n+1}}(\e_\theta)|^2
	\lesssim |\theta|^{2\wedge a}+ \rho_{n+1}|\theta|^\alpha\,.
\end{align}
Applying the above estimate in \eqref{tmp:estI3} and a change of variables yields
\begin{multline*}
	\PP^\phi_\mu\lt(\sup_{a_n\le t\le a_{n+1}}t^{\frac{d+|k|}\alpha}|I_3|\rt)^2
	\lesssim \lt(\frac{a_{n+1}^2}{a_n}\rt)^{\frac{d+|k|}\alpha} \int_{\RR^d}e^{-(a_n- \rho_{n+1})|\theta|^\alpha}(|\theta|^{2\wedge a}+ \rho_{n+1}|\theta|^\alpha) |\theta|^{|k|}{\rm d} \theta
	\\\lesssim e^{\frac{d+|k|}\alpha} \int_{\RR^d}e^{-(e^{-1}- \frac{\rho_{n+1}}{a_{n+1}})|\theta|^\alpha}(a_{n+1}^{-(2\wedge a)/\alpha} |\theta|^{2\wedge a}+ \frac{\rho_{n+1}}{a_{n+1}}|\theta|^\alpha) |\theta|^{|k|}{\rm d} \theta\,.
\end{multline*}
Observing that $\frac{\rho_{n}}{a_n}\lesssim a_n^{-\varepsilon_0}$ and $\sum_n a_n^{-\varepsilon}<\infty $ for any $\varepsilon>0$, the above estimate implies \eqref{tmp:pre322}.

		Finally, combining \eqref{tmp:I1} and \eqref{tmp:I23} yields
		\begin{equation*}
			\lim_{t\to\infty}t^{\frac{d+|k|}\alpha} X_{\rho(t)}(\partial^kp_{t- \rho(t)})=\frac{i^{|k|} }{(2 \pi)^d}\int_{\RR^d}e^{-|\theta|^\alpha}\theta^{ k}{\rm d} \theta\,.
		\end{equation*}
		The equality \eqref{stb:pk} follows from here, after observing that $X_{\rho(t)}(\partial^kp_{t- \rho(t)})$ is a real number.
	\end{proof}

	\begin{proof}[Proof of Theorem \ref{thm:stbSLLn}]
		We are going to verify the hypotheses in Proposition \ref{thm:SLLN}.
		As we have seen previously, the identity \eqref{stb:Pexpansion} verifies condition \eqref{con:Pct} with $c(t)=t^{(N+d)/\alpha}$ and $L_t$ defined in \eqref{stb:Lt}.
	We choose $\rho(t)= t^\kappa$ and $t_n=n^\delta$ with $\kappa,\delta\in(0,1)$ such that
	\begin{equation}\label{tmp:deltakap}
	 	\frac{N+d}\alpha+1+\varepsilon_0>\frac1\delta>\frac{N+d}\alpha+1\quad\textrm{and}\quad \lt(\frac{2N+d}\alpha+1+ \varepsilon_0 \rt) \kappa>\frac N \alpha+\frac1 \delta \,.
	\end{equation} 
	It is easy to verify conditions \eqref{con:rho}, \eqref{con:cratio} and \eqref{con:supc}.
		To check the condition \eqref{con:cseries}, we note that $\|T_tf^2\|_\infty\lesssim t^{-d/\alpha}\|f\|^2_{L^2}$. 
		So we need to verify that
		\begin{equation*}
			\sum_{n=1}^\infty n^{\delta\frac N \alpha}\int_{n^{\kappa \delta}}^{n^\delta}\frac{{\rm d}s}{\phi(s)}<\infty\,.
		\end{equation*}
		By Tonelli's theorem, the left-hand side above is at most a constant multiple  of 
		\begin{equation*}
			\int_1^\infty s^{\frac1 \kappa\frac{N}\alpha+\frac1{\kappa\delta}}\frac{{\rm d}s}{\phi(s)}\,.
		\end{equation*}
		The ranges of $\kappa,\delta$ chosen in \eqref{tmp:deltakap} ensures that $\frac {2N+d} \alpha+1+ \varepsilon_0>\frac1 \kappa\frac{N}\alpha+\frac1{\kappa\delta}$. Hence, the above integral is finite due to \eqref{con:phiX} and we have verified condition \eqref{con:cseries}. 
		The condition \eqref{con:contiousphi} is verified analogously.
		Finally, we verify \eqref{con:cD}.
		The assumption \eqref{con:hatf} ensures that $f\in \cdd(A)$  and
		\begin{equation*}
			|Af(x)|=\frac1{(2 \pi)^d}\lt|\int_{\RR^d}\hat f(\xi)e^{ix\cdot \xi}|\xi|^\alpha d \xi \rt|
			\le \frac1{(2 \pi)^d}\int_{\RR^d}|\hat f(\xi)| |\xi|^\alpha d \xi \,.
		\end{equation*}
		It follows that
		\begin{equation*}
			c_n\sup_{t\in[t_{n},t_{n+1}]}\|T_{t_{n+1}-t}f-f\|_\infty\lesssim  c_n(t_{n+1}-t_n)\lesssim n^{\delta\frac {N+d} \alpha+ \delta-1}
		\end{equation*}
		and, hence, \eqref{con:cD} is satisfied  because of our assumption on the range of $\delta$ in \eqref{tmp:deltakap}.
		Therefore, applying Proposition \ref{thm:SLLN}, we find that \eqref{eqn:cx} is valid with $c(t)=t^{\frac{N+d}\alpha}$ and $L_t$ defined by \eqref{stb:Lt}. In particular, we have
		\begin{equation*}
			\lim_{n}\sup_{t\in[t_n,t_{n+1}]}t^{\frac{N+d}\alpha} \lt|X_t(f)- \sum_{|k|\le N}\frac{(-1)^{|k|}}{k!}\int_{\RR^d}f(y)y^k \mathrm{d}yX_{\rho(t_n)}(\partial^kp_{t_n- \rho(t_n)}) 	\rt|=0\,.
		\end{equation*} 
		The long-time limit of $X_{\rho(t_n)}(\partial^kp_{t_n- \rho(t_n)})$ is given by Proposition \ref{prop:Xp}. This implies \eqref{stb1}.
\end{proof}
\begin{proof}[Proof of Theorem \ref{thm:X0vague}]\label{proof:Xvauge}
The class of functions $C^2_c(\RR^d)$ strongly separates points in the sense of Ethier and Kurtz \cite{MR838085}. 
From \cite{MR2673979}*{Lemma 2}, there exists a countable subset $\cmm$ of $C^2_c(\RR^d)$ which strongly separates points and is closed under multiplication. Set $\widetilde\cmm=\{e^{-\varepsilon|\cdot|^2}f:f\in\cmm,\varepsilon>0\}$. 
By Theorem \ref{thm:stbSLLn}, we see that with $\PP^\phi_\mu$-probability one, \eqref{stb1} with $N=0$ holds for every $f\in\widetilde\cmm$. 
An application of \cite{MR3131303}*{Lemma 7} implies that with $\PP^\phi_\mu$-probability one, \eqref{stb1} with $N=0$ holds for every continuous functions $g$ such that $e^{\varepsilon|\cdot|^2}g$ is bounded for some $\varepsilon>0$. 
This yields almost sure shallow convergence of $ t^{\frac d \alpha}X_t$ to $\frac1{(2 \pi)^{d}}\int_{\RR^d}e^{-|\theta|^\alpha}{\rm d} \theta\,\uplambda_d$ as $t\to\infty$.
\end{proof}
\section{Occupation times of stable Fleming-Viot processes} 
\label{sec:occupation_times_of_fleming_viot_super_stable_processes}
Let $(X_t)_{t\ge0}$ be the $(\alpha,\phi)$ Fleming-Viot superprocess 
and $(\mathbb X_t)_{t\ge0}$ be the corresponding $(\alpha,\phi)$ 
Fleming-Viot historical process with martingale measure $\mathbb M$.
Then, we established the occupation time process $Y$ and the inhabitation time process $Z$ for $X$ are connected through
$Z_t(f)-Y_t(f)=\mathbb M(\ell_f)$, where $\ell_f$ is defined in \eqref{ellf:eqn}.
(See Theorem \ref{Main3}\textbf{a} and Proposition \ref{YZrelate}.)
Using this $Z-Y$ relation and the method described in Subsection \ref{sub:long_term_asymptotics}, we are able to obtain long term asymptotics of both time processes.
As we saw earlier at the beginning of Section \ref{sec:stable_superprocesses}, the $(\alpha,\phi)$ Fleming-Viot superprocess can be 
recovered from the $(\alpha,\phi)$ 
Fleming-Viot historical process so we need only consider one probability 
measure, $\PP_{0,\mu^*}$, which we relabel $\PP_\mu$ to ease notation.
Recall that $\cnn_d$ is defined in \eqref{def:gamma} and $\mu$ is a probability measure on $\Rd$.
The following result, whose proof is presented in Subsection \ref{sub:occlimit}, is the key step in showing Theorem \ref{thm:occvague}.
\begin{prop}\label{thm:occ}
Assume that $\phi$ satisfies \eqref{con:occphi}. Let $f$ be a function in $b\cee(\Rd)$ such that $\cnn_d(f)<\infty$.
Then, the following assertions hold $\PP^\phi_\mu$-a.s.
\begin{enumerate}[(i)]
	\item (Low and critical dimensions, $d\le \alpha$)
	\begin{equation}\label{lim:lowYZ}
		\lim_{t\to\infty}\frac{Y_t(f)}{\gamma_d( t)}=\lim_{t\to\infty}\frac{Z_t(f)}{\gamma_d( t)}=\varkappa_d(\alpha) \int_{\RR^d}f(x){\rm d}x\,.
	\end{equation}
\item (High dimension, $d>\alpha$) The limits $\lim_{t\to\infty}Y_t(f)$, $\lim_{t\to\infty}Z_t(f)$ and $\lim_{t\to\infty}\MM_t(\ell_f)$ exist and are  finite random variables.
In addition, we have the following relation
	\begin{equation*}
		\lim_{t\to\infty}Z_t(f)=\lim_{t\to\infty}\MM_t(\ell_f)+\lim_{t\to\infty}Y_t(f)\,.
	\end{equation*}
\end{enumerate}
\end{prop}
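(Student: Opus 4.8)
The plan is to route everything through the decomposition $Z_t(f)=\MM_t(\ell_f)+Y_t(f)$ of Proposition~\ref{YZrelate} and to split each object at a slowly-growing sublinear time $\rho$ (with $\rho(t)\to\infty$, $\rho(t)=o(\gamma_d(t))$, and $\rho$ together with a lattice $\{t_n\}$ tuned to \eqref{con:occphi}). Writing $v_T:=\int_0^TT_rf\,\d r$, the Green identity \eqref{gree.TX} gives
\[
Y_t(f)=Y_{\rho(t)}(f)+X_{\rho(t)}(v_{t-\rho(t)})+\int_{\rho(t)}^t\int_{\RR^d}v_{t-r}(z)\,\d M^X(r,z),
\]
and, as $\MM_\cdot(\ell_f)$ is the martingale-measure integral of the (in $r$) constant integrand $\ell_f$, also $\MM_t(\ell_f)=\MM_{\rho(t)}(\ell_f)+\int_{\rho(t)}^t\int_{\mathbb E}\ell_f\,\d\MM$. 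The boundary terms are negligible: $|Y_{\rho(t)}(f)|\le\rho(t)\|f\|_\infty$ and, since $\XX_{\rho(t)}$ is carried by $\{r=\rho(t)\}$, $|\MM_{\rho(t)}(\ell_f)|=|Z_{\rho(t)}(f)-Y_{\rho(t)}(f)|\le 2\rho(t)\|f\|_\infty$, both $o(\gamma_d(t))$. For the deterministic ingredient I would rescale the Fourier representation of $v_T$ exactly as in the proof of Proposition~\ref{prop:Tt} and insert the stable density \eqref{eqn:density}; this should give, for each fixed $x$, $\gamma_d(T)^{-1}v_T(x)\to\varkappa_d(\alpha)\|f\|_{L^1(\RR^d)}$ when $d\le\alpha$ and $v_T(x)\to v_\infty(x):=\frac1{(2\pi)^d}\int_{\RR^d}e^{ix\cdot\theta}|\theta|^{-\alpha}\hat f(\theta)\,\d\theta$ when $d>\alpha$, with $\sup_{T\ge1,\,x}\gamma_d(T)^{-1}|v_T(x)|<\infty$ (low/critical) and $\sup_{T,x}|v_T(x)|\lesssim\cnn_d(f)$, $v_\infty\in C_0(\RR^d)$ (high) --- the finiteness of $\cnn_d(f)$ being precisely what these frequency integrals require.

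In \textbf{low and critical dimensions}, $\gamma_d(t)^{-1}v_{t-\rho(t)}$ converges pointwise to $\varkappa_d(\alpha)\|f\|_{L^1(\RR^d)}$ and stays uniformly bounded; combining this with a first-moment tail estimate $\PP^\phi_\mu X_{\rho(t)}(\{|x|>R\rho(t)^{1/\alpha}\})\le C(R)$, $C(R)\to0$ (from the $\alpha$-stable scaling of $\mu(T_{\rho(t)}\,\cdot)$), and using that $\rho(t)^{1/\alpha}=o(t^{1/\alpha})$ while $v_{t-\rho(t)}$ is asymptotically constant on $\{|x|\lesssim t^{1/\alpha}\}$, one obtains $\gamma_d(t)^{-1}X_{\rho(t)}(v_{t-\rho(t)})\to\varkappa_d(\alpha)\|f\|_{L^1(\RR^d)}$ a.s.\ (the ``in probability'' version being promoted along a subsequence by Markov's inequality and Borel--Cantelli). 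The late-window martingale $\int_{\rho(t)}^t\int v_{t-r}\,\d M^X$ has second moment $\le\|v_t\|_\infty^2\int_{\rho(t)}^\infty\frac{\d r}{\phi(r)}\lesssim\gamma_d(t)^2\int_{\rho(t)}^\infty\frac{\d r}{\phi(r)}=o(\gamma_d(t)^2)$ by \eqref{con:occphi}. For the other late-window integral, subtracting the deterministic constant $\mu(v_s)$ inside $\XX_s(\ell_f^2)-\XX_s(\ell_f)^2$ gives $\PP^\phi_\mu\big[\langle\MM(\ell_f)\rangle_t-\langle\MM(\ell_f)\rangle_{\rho(t)}\big]\le\int_{\rho(t)}^t\big(\PP^\phi_\mu\XX_s(\ell_f^2)-\mu(v_s)^2\big)\frac{\d s}{\phi(s)}$, and the key estimate $\PP^\phi_\mu\XX_s(\ell_f^2)-\mu(v_s)^2\lesssim\gamma_d(s)^2$ --- obtained from the explicit formula $\PP^\phi_\mu\XX_s(\ell_f^2)=2\int_0^s\int_0^w\mu\big(T_u(f\,T_{w-u}f)\big)\,\d u\,\d w$ and the cancellation against $\mu(v_s)^2=2\int_0^s\int_0^w\mu(T_u f)\,\mu(T_w f)\,\d u\,\d w$ --- makes this $o(\gamma_d(t)^2)$ as well. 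A Borel--Cantelli argument along $\{t_n\}$ (chosen so the tails $\int_{\rho(t_n)}^\infty\frac{\d s}{\phi(s)}$ are summable) plus interpolation over $[t_n,t_{n+1}]$ (monotonicity of $t\mapsto Y_t(|f|)$ for the occupation piece; the martingale maximal inequality for $\MM(\ell_f)$) then gives $\gamma_d(t)^{-1}Y_t(f)\to\varkappa_d(\alpha)\|f\|_{L^1(\RR^d)}$ and $\gamma_d(t)^{-1}\MM_t(\ell_f)\to0$ a.s., whence $\gamma_d(t)^{-1}Z_t(f)\to\varkappa_d(\alpha)\|f\|_{L^1(\RR^d)}$, which is \eqref{lim:lowYZ}.

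In \textbf{high dimension}, $v_\infty$ is bounded and $v_{t-r}=v_\infty-T_{t-r}v_\infty$, so the $s=0$ Green identity reads $Y_t(f)=\mu(v_t)+M^X_t(v_\infty)-\big(X_t(v_\infty)-\mu(T_tv_\infty)\big)$, where $M^X_t(v_\infty)$ is an $L^2$-bounded $(\cgg_t)$-martingale (its quadratic variation is $\le\|v_\infty\|_\infty^2\int_0^\infty\frac{\d s}{\phi(s)}<\infty$) hence converges a.s., and $\mu(T_tv_\infty)\to0$ by dominated convergence since $v_\infty\in C_0(\RR^d)$. Combined with the bound, uniform in $t>s$, $\PP^\phi_\mu|Y_t(f)-Y_s(f)|^2\le\PP^\phi_\mu X_s(v_{t-s}^2)+\int_s^t\|v_{t-r}\|_\infty^2\frac{\d r}{\phi(r)}$, whose first term $\to0$ as $s\to\infty$ because $\mu(T_s|v_\infty|)\to0$, this makes $Y_t(f)$ $L^2$-Cauchy and, after the same lattice-plus-interpolation step, a.s.\ convergent to a finite $Y_\infty(f)$; since $\MM_\infty(\ell_f)$ exists by Proposition~\ref{prop:martcorr}(i), Proposition~\ref{YZrelate} forces $Z_\infty(f)=\MM_\infty(\ell_f)+Y_\infty(f)$, proving (ii).

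The hard part is the low/critical-dimensional control of the noise: the crude pointwise-in-$x$ bounds on the two late-window integrands give only $O(\gamma_d(t)^2)$ --- boundedness, not decay, after division by $\gamma_d(t)^2$. Extracting the genuine $o(\gamma_d(t))$ really requires all of: the damping of the noise by $\phi^{-1}$ over the window $[\rho(t),t]$; the fact that at time $\rho(t)$ the Fleming--Viot mass still sits at the small scale $\rho(t)^{1/\alpha}\ll t^{1/\alpha}$, where the potential $v_{t-\rho(t)}$ is asymptotically flat; the exact-order cancellation $\PP^\phi_\mu\XX_s(\ell_f^2)-\mu(v_s)^2\lesssim\gamma_d(s)^2$ (whereas $\PP^\phi_\mu\XX_s(\ell_f^2)$ alone is of the strictly larger order $s^{2-d/\alpha}$ when $d<\alpha$); and an upgrade of the $X_{\rho(t)}$-localization from ``in probability'' to ``almost sure''.
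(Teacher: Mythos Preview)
Your route is much more elaborate than necessary, and one of its load-bearing claims actually fails under the sole hypothesis \eqref{con:occphi}. In low/critical dimension you want to choose $\rho$ and $\{t_n\}$ so that $\rho(t_n)=o(\gamma_d(t_n))$ \emph{and} $\sum_n\int_{\rho(t_n)}^\infty \phi(s)^{-1}\d s<\infty$, while still interpolating over $[t_n,t_{n+1}]$ via monotonicity (which forces $\gamma_d(t_{n+1})/\gamma_d(t_n)$ bounded). These three constraints are incompatible for e.g.\ $\phi(s)=s(\log s)^2$: then $\int_\rho^\infty \phi^{-1}=1/\log\rho$, so summability forces $\log\rho(t_n)\gtrsim n^{1+\varepsilon}$, hence $\gamma_d(t_n)\gg e^{n^{1+\varepsilon}}$, and the monotonicity sandwich degenerates. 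Separately, your ``hard part'' diagnosis is off: $\PP^\phi_\mu\XX_s(\ell_f^2)$ is \emph{already} of order $\gamma_d(s)^2$, not $s^{2-d/\alpha}$ --- this is exactly \eqref{est:T2up} applied at $r=0$ --- so no cancellation against $\mu(v_s)^2$ is needed.

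The paper dispenses with $\rho$ entirely. For (ii) one may take $f\ge0$, note $Y_t(f)$ is increasing, and use Fatou with $\PP^\phi_\mu Y_t(f)=\mu(v_t)\to\mu(v_\infty)<\infty$ to conclude $Y_\infty(f)<\infty$ a.s.; then Proposition~\ref{prop:martcorr}(i) and $Z=\MM+Y$ finish. For (i), apply \eqref{est:TX} at $s=0$ to get $\PP^\phi_\mu|Y_{t_n}(f)-\mu(v_{t_n})|^2\lesssim\int_0^{t_n}\gamma_d(r)^2\phi(r)^{-1}\d r$, and observe that with $\gamma_d(t_n)=q^n$ the Fubini swap $\sum_n q^{-2n}\int_0^{t_n}\gamma_d(r)^2\phi(r)^{-1}\d r=\int_0^\infty\gamma_d(r)^2\phi(r)^{-1}\sum_{n:\,q^n>\gamma_d(r)}q^{-2n}\,\d r\lesssim\int_1^\infty\phi(r)^{-1}\d r$ gives Borel--Cantelli summability directly from \eqref{con:occphi}, with no tail-rate information required. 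Lemma~\ref{lem.limTf} handles $\mu(v_{t_n})/\gamma_d(t_n)$, monotonicity interpolates (then $q\downarrow1$), and Proposition~\ref{prop:martcorr}(ii) --- proved by the \emph{same} Fubini trick --- gives $\MM_t(\ell_f)/\gamma_d(t)\to0$. The whole low/critical argument is three lines once you set $\rho=0$.
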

\begin{remark}
	The condition $\cnn_d(f)<\infty$ ensures that $\int_0^tT_sf(x)\d s$ is finite for every $t>0$ and $x\in\Rd$. This can be seen from the following identity which is a consequence of \eqref{eqn:StbPhat},\begin{align}\label{id.Tf}
		\int_0^t T_s f(x){\rm d}s
		&=\frac1{(2 \pi)^d}\int_{\RR^d}e^{i \theta\cdot x}\frac{1-e^{-t|\theta|^\alpha}}{|\theta|^\alpha}\hat f(\theta){\rm d} \theta\,.
	\end{align}
	Indeed, when $d<\alpha$, $\frac{1-e^{-t|\theta|^\alpha}}{|\theta|^\alpha}$ is integrable over $\Rd$, then the right-hand side above is bounded above by a multiple constant of $\|f\|_{L^1(\Rd)} $. 
	When $d\ge \alpha$, $\frac{1-e^{-t|\theta|^\alpha}}{|\theta|^\alpha}$ is not integrable as $|\theta|\to\infty$. However, the right-hand side of \eqref{id.Tf} is finite if $\int_\Rd|\hat f(\theta)||\theta|^{-\alpha}\d \theta $ is finite.
	The finiteness of $\int_\Rd|\hat f(\theta)||\theta|^{-\alpha}\d \theta$ is also necessary to control $\int_0^1T_sf(x)\d s$ when $d=\alpha$.
\end{remark}
	The following lemma will be useful later.
	\begin{lemma}\label{lem.limTf} Let $f$ be a function in $b\cee(\Rd)$ with $\cnn_d(f)<\infty$. 

	(i) If $d\le \alpha$, then for every $x\in\Rd$,
		\begin{equation}\label{lim.Tf}
			\lim_{t\to\infty}\frac1{\gamma_d(t)}\int_0^tT_sf(x)\d s=\varkappa_d(\alpha)\uplambda_d(f)\,,
		\end{equation}
		where we recall that $\varkappa_d$ is defined in \eqref{def.kap} and $\uplambda_d$ is the Lebesgue measure on $\Rd$.
	
	(ii) If $d>\alpha$, then
		\begin{equation}\label{eqb:hid}
			\lim_{t\to\infty}\sup_{x\in\RR^d}\lt|\int_0^tT_sf(x){\rm d}s-\frac1{(2 \pi)^d}\int_{\RR^d}e^{ix\cdot \theta}\hat f(\theta)|\theta|^{-\alpha}{\rm d} \theta\rt|=0\,.
		\end{equation}
	\end{lemma}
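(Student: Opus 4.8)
The plan is to start from the Fourier representation \eqref{id.Tf} and analyze the two cases separately, using dominated convergence in each. Write
\[
\int_0^t T_sf(x)\,{\rm d}s=\frac1{(2\pi)^d}\int_{\RR^d}e^{i\theta\cdot x}\,\frac{1-e^{-t|\theta|^\alpha}}{|\theta|^\alpha}\,\hat f(\theta)\,{\rm d}\theta\,.
\]
For part (ii) (high dimension $d>\alpha$), the function $\theta\mapsto|\theta|^{-\alpha}\hat f(\theta)$ is in $L^1(\RR^d)$: near the origin, $\hat f$ is bounded and $|\theta|^{-\alpha}$ is locally integrable since $d>\alpha$, while away from the origin the assumption $\cnn_d(f)<\infty$ (i.e.\ $\int|\hat f(\theta)||\theta|^{-\alpha}{\rm d}\theta<\infty$) controls the tail. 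Hence I would bound the difference inside the supremum in \eqref{eqb:hid} by
\[
\frac1{(2\pi)^d}\int_{\RR^d} e^{-t|\theta|^\alpha}\,\frac{|\hat f(\theta)|}{|\theta|^\alpha}\,{\rm d}\theta\,,
\]
which is independent of $x$ and tends to $0$ as $t\to\infty$ by dominated convergence (the integrand decreases to $0$ pointwise for $\theta\neq0$ and is dominated by the integrable $|\theta|^{-\alpha}|\hat f(\theta)|$). This gives (ii) immediately.

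For part (i) (low and critical dimension $d\le\alpha$), the measure $|\theta|^{-\alpha}{\rm d}\theta$ is no longer integrable near the origin, so the $t\to\infty$ behavior is genuinely divergent and must be rescaled by $\gamma_d(t)$. I would substitute $\theta=t^{-1/\alpha}\eta$ to obtain
\[
\frac1{\gamma_d(t)}\int_0^t T_sf(x)\,{\rm d}s
=\frac{t^{1-d/\alpha}}{(2\pi)^d\,\gamma_d(t)}\int_{\RR^d}e^{i t^{-1/\alpha}\eta\cdot x}\,\frac{1-e^{-|\eta|^\alpha}}{|\eta|^\alpha}\,\hat f(t^{-1/\alpha}\eta)\,{\rm d}\eta\,.
\]
When $d<\alpha$ we have $\gamma_d(t)=t^{1-d/\alpha}$, so the prefactor is $1$, the phase $e^{it^{-1/\alpha}\eta\cdot x}\to1$ and $\hat f(t^{-1/\alpha}\eta)\to\hat f(0)=\uplambda_d(f)$ pointwise; since $\hat f$ is bounded and $\frac{1-e^{-|\eta|^\alpha}}{|\eta|^\alpha}\in L^1(\RR^d)$ when $d<\alpha$, dominated convergence yields the limit $\frac{\hat f(0)}{(2\pi)^d}\int_{\RR^d}\frac{1-e^{-|\eta|^\alpha}}{|\eta|^\alpha}{\rm d}\eta$. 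A short computation (switching to the time integral $\int_0^\infty e^{-s|\eta|^\alpha}{\rm d}s\,{\rm d}\eta=\int_{\RR^d}\int_0^\infty e^{-s|\eta|^\alpha}{\rm d}s\,{\rm d}\eta=\frac{\alpha}{\alpha-d}\int_{\RR^d}e^{-|\eta|^\alpha}{\rm d}\eta$, valid for $d<\alpha$) identifies this constant as $\varkappa_d(\alpha)$, matching \eqref{def.kap}.

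The critical case $d=\alpha$ is where I expect the only real subtlety: here $\gamma_d(t)=\ln(t\vee1)$, and $\frac{1-e^{-|\eta|^\alpha}}{|\eta|^\alpha}$ is \emph{not} integrable at infinity in $\RR^d$ when $d=\alpha$, so the naive pointwise-limit argument fails and the logarithm arises from the large-$|\eta|$ (equivalently, small-time) behavior. The plan is to split the $s$-integral as $\int_0^1+\int_1^t$; the piece $\int_0^1 T_sf(x)\,{\rm d}s$ is bounded uniformly in $x$ and $t$ (finiteness of $\int|\hat f(\theta)||\theta|^{-\alpha}{\rm d}\theta$ is exactly what is needed to control it, as noted in the remark) hence contributes nothing after dividing by $\ln t$. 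For $\int_1^t T_sf(x)\,{\rm d}s=\frac1{(2\pi)^d}\int_{\RR^d}e^{i\theta\cdot x}\frac{e^{-|\theta|^\alpha}-e^{-t|\theta|^\alpha}}{|\theta|^\alpha}\hat f(\theta)\,{\rm d}\theta$, I would write $\hat f(\theta)=\hat f(0)+(\hat f(\theta)-\hat f(0))$; the remainder term, using $|\hat f(\theta)-\hat f(0)|\lesssim|\theta|^{\min(1,\ldots)}$ near $0$ (from $\cnn_d(f)<\infty$ and $f\in L^1$) and the $L^1$-tail bound, contributes an $O(1)$ term (again negligible after dividing by $\ln t$), while the main term $\frac{\hat f(0)}{(2\pi)^d}\int_{\RR^d}\frac{e^{-|\theta|^\alpha}-e^{-t|\theta|^\alpha}}{|\theta|^\alpha}{\rm d}\theta$ reduces, via polar coordinates and the substitution $u=r^\alpha$, to $\frac{\hat f(0)}{(2\pi)^d}\cdot\frac{\omega_{d-1}}{\alpha}\int_0^\infty\frac{e^{-u}-e^{-tu}}{u}{\rm d}u=\frac{\hat f(0)}{(2\pi)^d}\cdot\frac{\omega_{d-1}}{\alpha}\ln t$ (Frullani's integral). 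Dividing by $\ln t$ and matching the surface-area constant against $(2\pi)^{-d}\int_{\RR^d}e^{-|\theta|^\alpha}{\rm d}\theta$ gives precisely $\varkappa_d(\alpha)\uplambda_d(f)$ as in \eqref{def.kap}, completing the critical case and the lemma.
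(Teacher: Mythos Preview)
Your treatment of part (ii) and of the subcritical case $d<\alpha$ in part (i) matches the paper's proof essentially line for line.

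In the critical case $d=\alpha$, however, there is a gap. Your remainder bound rests on the claim $|\hat f(\theta)-\hat f(0)|\lesssim|\theta|^{\min(1,\ldots)}$ near $0$, but the hypotheses do not yield this: in the critical case $\cnn_d(f)<\infty$ only says $f\in L^1(\RR^d)$ and $\int|\hat f(\theta)||\theta|^{-\alpha}\,{\rm d}\theta<\infty$, and neither implies a moment condition $\int|x|^a|f(x)|\,{\rm d}x<\infty$. So $\hat f$ is merely continuous at $0$, not H\"older. Without a modulus of continuity, the remainder integral over $\{|\theta|\le1\}$ is dominated only by $\int_{|\theta|\le1}|\theta|^{-\alpha}|\hat f(\theta)-\hat f(0)|\,{\rm d}\theta$, and since $|\theta|^{-\alpha}=|\theta|^{-d}$ is not locally integrable, your $O(1)$ claim is unjustified. (A secondary point: you silently drop the phase $e^{i\theta\cdot x}$ when isolating the Frullani main term; that can be fixed since $|e^{i\theta\cdot x}-1|\lesssim|\theta||x|$ near $0$, but it should be said.)

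The paper avoids the H\"older issue entirely. After the same split $\int_0^1+\int_1^t$, it substitutes $\theta\mapsto s^{-1/d}\theta$ in each time slice to write
\[
\int_1^t T_sf(x)\,{\rm d}s=\frac1{(2\pi)^d}\int_1^t\Bigl(\int_{\RR^d}e^{is^{-1/d}x\cdot\theta-|\theta|^d}\hat f(s^{-1/d}\theta)\,{\rm d}\theta\Bigr)\frac{{\rm d}s}{s}=\int_1^t g_x(s)\,\frac{{\rm d}s}{s},
\]
where $g_x(s)\to(2\pi)^{-d}\hat f(0)\int_{\RR^d} e^{-|\theta|^d}\,{\rm d}\theta$ as $s\to\infty$ by dominated convergence, using only continuity of $\hat f$ at $0$. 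A straightforward $\varepsilon$--$K$--$u$ splitting then shows $\frac1{\ln t}\int_1^t g_x(s)\frac{{\rm d}s}{s}$ converges to the same limit. Your Frullani route can be repaired in the same spirit (replace the H\"older claim by an $\varepsilon$--$\delta$ argument near $\theta=0$ and show the remainder is $o(\ln t)$ rather than $O(1)$), but as written it does not go through.
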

	\begin{proof}
		Consider first the case $d< \alpha$. From \eqref{id.Tf}, we have
		\begin{align}\label{431}
			\int_0^t T_s f(x){\rm d}s
			=t^{1-\frac d \alpha} \frac1{(2 \pi)^d}\int_{\RR^d}e^{it^{-1/\alpha} \theta\cdot x}\frac{1-e^{-|\theta|^\alpha}}{|\theta|^\alpha}\hat f(t^{-\frac1 \alpha}\theta){\rm d} \theta\,.
		\end{align}
		Using the facts that $\int_{\RR^d}\frac{1-e^{-|\theta|^\alpha}}{|\theta|^\alpha}{\rm d} \theta$ is integrable and $\lim_{t\to\infty}\hat f(t^{-1/\alpha}\theta)=\hat f(0)=\uplambda_d(f)$, we can derive \eqref{lim.Tf} from dominated convergence theorem. 

		The case $d=\alpha$ is a bit more subtle.
		From \eqref{eqn:StbPhat}, we have 
		\begin{align*}
			\int_1^tT_s f(x){\rm d}s
			=\frac1{(2 \pi)^d} \int_1^t\int_{\RR^d}e^{ix\cdot \theta-s|\theta|^d}\hat f(\theta){\rm d} \theta {\rm d}s
			=\frac1{(2 \pi)^d} \int_1^t\int_{\RR^d}e^{is^{-1/d}x\cdot \theta-|\theta|^d}\hat f(s^{-\frac1d}\theta){\rm d} \theta\frac{{\rm d}s}s\,,
		\end{align*}
		which implies
		\begin{equation}\label{tmp:crd}
			\lt\|\int_1^uT_sf{\rm d}s\rt\|_\infty \lesssim\ln(u)|\hat f(0)|\quad\forall u\ge1\,.
		\end{equation}
		Now, let $\varepsilon$ be a positive number and choose $K>0$ such that
		\begin{equation*}
			\int_{|\theta|>K}e^{-|\theta|^d}{\rm d} \theta\le \varepsilon\,
		\end{equation*}
		and then $u>1$ such that
		\begin{equation*}
			\sup_{s\ge u}\sup_{|\theta|\le K}|e^{is^{-1/d}x\cdot \theta} \hat f(s^{-\frac1d}\theta)-\hat f(0)|\le \varepsilon \,.
		\end{equation*}
		Such a choice is always possible because of the continuity of $ \hat f$ at $0$. It follows that
		\begin{align*}
		 	&\lt\|\int_u^tT_sf(x) {\rm d}s-\hat f(0)\int_u^t\int_{\Rd}e^{-|\theta|^d}\d \theta \frac{\d s} s\rt\|_\infty
		 	\\&\le\frac1{(2 \pi)^d} \(\int_u^t\int_{|\theta|\le K}+\int_u^t\int_{|\theta|> K}\)e^{-|\theta|^d}|e^{is^{-1/d}x\cdot \theta}\hat f(s^{-\frac1d}\theta)-\hat f(0)|{\rm d} \theta\frac{{\rm d}s}s
		 	\\&\lesssim \varepsilon\int_{\RR^d}e^{-|\theta|^d}{\rm d} \theta\ln(\frac tu)+ \varepsilon|\hat f(0)|\ln(\frac tu) \,.
		\end{align*} 
		Combining with \eqref{tmp:crd}, this yields
		\begin{equation*}
			\limsup_{t\to\infty}\frac{1}{\ln t}\lt\|\int_1^tT_sf(x){\rm d}s-\hat f(0)\int_1^t\int_{\Rd}e^{-|\theta|^d}\d \theta \frac{\d s} s\rt\|_\infty \lesssim \varepsilon\,.
		\end{equation*}
		Sending $\varepsilon\to0$, we obtain
		\begin{equation*}
			\lim_{t\to\infty}\frac1{\ln t} \int_1^tT_sf(x)\d s=\varkappa_d(\alpha)\uplambda_d(f)\,.
		\end{equation*}
		Finally, since $|\int_0^1T_sf(x)\d s|\lesssim \int_\Rd|\hat f(\theta)||\theta|^{-\alpha}\d \theta$ which is finite, the above implies \eqref{lim.Tf}.

		In case $d>\alpha$, from \eqref{id.Tf}, we have
		\begin{align*}
		\int_0^tT_sf(x){\rm d}s-\frac1{(2 \pi)^d}\int_{\RR^d}e^{ix\cdot \theta}\hat f(\theta)|\theta|^{-\alpha}{\rm d} \theta=
		\frac{-1}{(2 \pi)^d}\int_{\RR^d}e^{ix\cdot \theta-t|\theta|^\alpha}\hat f(\theta)|\theta|^{-\alpha}{\rm d} \theta\,.
		\end{align*}
		Hence,
		\begin{align*}
			\sup_{x\in\RR^d}\lt| \int_0^tT_sf(x){\rm d}s-\frac1{(2 \pi)^d}\int_{\RR^d}e^{ix\cdot \theta}\hat f(\theta)|\theta|^{-\alpha}{\rm d} \theta\rt|\le
			\frac1{(2 \pi)^d}\int_{\RR^d}e^{-t|\theta|^\alpha}|\hat f(\theta)||\theta|^{-\alpha}{\rm d} \theta\,,
		\end{align*}
		which together with dominated convergence theorem implies \eqref{eqb:hid}.
	\end{proof}
	From now on, we assume that $f$ is a bounded measurable function on $\RR^d$ such that $\cnn_d(f)$ is finite.
	From the proof of Lemma \ref{lem.limTf}, it follows that in every dimension,
	\begin{equation}\label{est:Tup}
		\lt\|\int_0^t T_s f{\rm d}s\rt\|_{\infty}\lesssim\cnn_d(f)( \gamma_d(t)\vee1)\quad\forall t\ge0\,.
	\end{equation}
	By the homogeneous Markov property of $\xi$, we also have
	\begin{equation}\label{est:T2up}
	\sup_{x\in\RR^d}\lt|P_x\lt(\int_0^t f(\xi_u){\rm d}u\rt)^2\rt|
	=2\sup_{x\in\RR^d}\lt|\int_0^t\int_0^{t-u}T_u[fT_{s}f](x){\rm d}s{\rm d}u\rt|
	\lesssim \cnn_d^2(f) (\gamma_d(t)\vee1)^2
	\end{equation}
	for every $t\ge0$.

\subsection{Martingale corrector} 
\label{sub:martingale_corrector}
	We investigate the long time limit of the martingale difference $\MM_t(\ell_f)$. 
	For each $q>1$ and $n\in\NN_0$, define
	\begin{equation}\label{def.tn}
		t_n=t_n(q)=\lt\{
		\begin{array}{ll}
			q^{\frac\alpha{\alpha-d} n}& \textrm{ if }d< \alpha
			\\e^{q^n}& \textrm{ if }d= \alpha
		\end{array}
		\rt.
		\quad\textrm{so that}\quad \gamma_d(t_n)=q^n\,.
	\end{equation}
	\begin{proposition}\label{prop:martcorr}
	Let $f$ be a bounded measurable function on $\RR^d$ such that $\cnn_d(f)<\infty$.
	Then, (i) $\MM_t(\ell_f)$ converges $\PP^\phi_{\mu}$-a.s. and in $L^2(\Omega)$ as $t\to\infty$ if $\int_0^\infty \frac{\gamma^2_d(s)}{\phi(s)}\mathrm{d}s<\infty$.\\
	(ii)  
	$\displaystyle
		\lim_{t\to\infty}\frac{\MM_t(\ell_f)}{\gamma_d(t)}=0\quad\PP^\phi_{\mu}\textrm{-a.s.}$ if  \eqref{con:occphi} holds.
	\end{proposition}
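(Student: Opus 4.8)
Both parts hinge on a single second-moment estimate for the martingale $\MM_t(\ell_f)$. By Proposition \ref{YZrelate}, $\MM_\cdot(\ell_f)$ is a continuous $(\cgg_t)$-martingale started at $0$ with quadratic variation \eqref{eqn:quadMell}, and since each $\XX_s$ is a probability measure we may drop the nonnegative term $\XX_s(\ell_f)^2$, so that $\PP^\phi_\mu\MM_t(\ell_f)^2=\PP^\phi_\mu\langle\MM(\ell_f)\rangle_t\le\int_0^t\PP^\phi_\mu\XX_s(\ell_f^2)\,\phi(s)^{-1}\d s$. The plan is to bound the integrand using the support property of the historical process. Under $\PP^\phi_\mu=\PP^\phi_{0,\mu^*}$ one has $\supp\XX_s\subset\mathbb E^{s}$ (Remark \ref{rmk:supp}), hence $\XX_s(\ell_f^2)=\XX_s((\ell_f^s)^2)$ and $\ell_f^s$ restricted to $\mathbb E^s$ is $(s,y)\mapsto\int_0^sf(y_u)\d u$; inserting this into the first-moment identity \eqref{eqn:hist1stmoment} together with the representation \eqref{id:TT} of $\TT$ identifies $\PP^\phi_\mu\XX_s(\ell_f^2)$ with $\int_{\RR^d}P_x\bigl(\int_0^sf(\xi_u)\d u\bigr)^2\mu(\d x)$, which by \eqref{est:T2up} is $\lesssim\cnn_d^2(f)(\gamma_d(s)\vee1)^2$. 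Altogether,
\[
	\PP^\phi_\mu\MM_t(\ell_f)^2\ \lesssim\ \cnn_d^2(f)\int_0^t\frac{(\gamma_d(s)\vee1)^2}{\phi(s)}\,\d s\qquad\text{for all }t\ge0.
\]

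For part (i): since $1/\phi$ is continuous on $[0,\infty)$ the contribution of $[0,1]$ to this integral is finite, while for $s\ge1$ one has $(\gamma_d(s)\vee1)^2\le1+\gamma_d(s)^2$, and because $\gamma_d\to\infty$ when $d\le\alpha$ and $\gamma_d\equiv1$ when $d>\alpha$, the hypothesis $\int_0^\infty\gamma_d^2(s)\phi(s)^{-1}\d s<\infty$ forces $\sup_{t\ge0}\PP^\phi_\mu\MM_t(\ell_f)^2<\infty$. A continuous $L^2$-bounded martingale converges $\PP^\phi_\mu$-a.s.\ and in $L^2(\Omega)$, which is assertion (i).

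For part (ii) I would run the standard two-step strong-law argument along the geometric time scale $t_n=t_n(q)$ of \eqref{def.tn}, which satisfies $\gamma_d(t_n)=q^n$; this scale is defined precisely in low and critical dimensions $d\le\alpha$ (where $\gamma_d$ is increasing and unbounded), the case relevant here. Set $b_j=\int_{t_{j-1}}^{t_j}\phi(s)^{-1}\d s$, so that $\sum_{j\ge1}b_j=\int_{t_0}^\infty\phi(s)^{-1}\d s<\infty$ by \eqref{con:occphi}. Since $\gamma_d$ is increasing, the displayed second-moment bound gives $\PP^\phi_\mu\MM_{t_n}(\ell_f)^2\lesssim C_0+\sum_{j=1}^nq^{2j}b_j$ with $C_0=\int_0^{t_0}\phi(s)^{-1}\d s<\infty$. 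By Chebyshev's inequality and an interchange of the order of summation,
\[
	\sum_{n\ge1}\PP^\phi_\mu\lt(\frac{|\MM_{t_n}(\ell_f)|}{q^n}>\varepsilon\rt)\ \lesssim\ \varepsilon^{-2}\sum_{n\ge1}q^{-2n}\Bigl(C_0+\sum_{j=1}^nq^{2j}b_j\Bigr)\ \lesssim\ \varepsilon^{-2}\Bigl(C_0+\sum_{j\ge1}b_j\Bigr)<\infty,
\]
so $q^{-n}\MM_{t_n}(\ell_f)\to0$ $\PP^\phi_\mu$-a.s.\ by Borel--Cantelli. To pass to the continuous-time limit, apply Doob's maximal inequality to the continuous martingale $t\mapsto\MM_t(\ell_f)-\MM_{t_n}(\ell_f)$ on $[t_n,t_{n+1}]$: the same bound applied to the increment gives $\PP^\phi_\mu|\MM_{t_{n+1}}(\ell_f)-\MM_{t_n}(\ell_f)|^2\lesssim\gamma_d(t_{n+1})^2b_{n+1}=q^{2(n+1)}b_{n+1}$, hence $\PP^\phi_\mu\bigl(\sup_{t\in[t_n,t_{n+1}]}|\MM_t(\ell_f)-\MM_{t_n}(\ell_f)|>\varepsilon q^n\bigr)\lesssim\varepsilon^{-2}q^2b_{n+1}$, which is summable in $n$; Borel--Cantelli then yields $q^{-n}\sup_{t\in[t_n,t_{n+1}]}|\MM_t(\ell_f)-\MM_{t_n}(\ell_f)|\to0$ a.s. Since $\gamma_d(t)\ge q^n$ for $t\in[t_n,t_{n+1}]$, combining the two limits gives $\gamma_d(t)^{-1}|\MM_t(\ell_f)|\le q^{-n}|\MM_{t_n}(\ell_f)|+q^{-n}\sup_{r\in[t_n,t_{n+1}]}|\MM_r(\ell_f)-\MM_{t_n}(\ell_f)|\to0$ a.s., which is assertion (ii).

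The one step with genuine content is the reduction of $\PP^\phi_\mu\XX_s(\ell_f^2)$ to the motion estimate \eqref{est:T2up}: it relies on $\supp\XX_s\subset\mathbb E^s$ together with the fact that $\ell_f$ and its bounded truncation $\ell_f^s$ agree on $\mathbb E^s$, which is exactly what allows the unbounded counting functional $\ell_f$ to be handled through the bounded approximants already placed in $\cdd(\AA)$ in Lemma \ref{Nov7_18a}. Everything else is routine: $L^2$-bounded martingale convergence for (i), and the Chebyshev/Borel--Cantelli-plus-Doob scheme along a geometric time scale for (ii), where the geometric spacing $\gamma_d(t_n)=q^n$ is precisely what renders the double sum of variances convergent.
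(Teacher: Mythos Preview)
Your proof is correct and follows essentially the same approach as the paper. The key second-moment bound $\PP^\phi_\mu\MM_t(\ell_f)^2\lesssim\cnn_d^2(f)\int_0^t(\gamma_d(s)\vee1)^2\phi(s)^{-1}\d s$ is obtained in both proofs via the support property $\supp\XX_s\subset\mathbb E^s$ and \eqref{est:T2up}; your reduction is slightly more direct (you evaluate at $r=0$ immediately, whereas the paper first bounds $\TT_s(\ell_f^s)^2(r,y)$ for general $(r,y)$ and then integrates against $\delta_0\times\mu^*$). For part (ii) the paper applies Doob's maximal inequality directly on each block $[t_{n-1},t_n]$ to bound $\sup_{[t_{n-1},t_n]}|\MM_t(\ell_f)|$ by $|\MM_{t_n}(\ell_f)|$ and then proves the same double-sum estimate \eqref{tmp:1.13}; your two-step version (subsequence via Chebyshev, then increments via Doob) is a cosmetic rearrangement of the same computation.
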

	\begin{proof}
		(i) By martingale convergence theorem, it suffices to show
		\begin{equation}\label{tmp:L2M}
			\sup_{t\ge0} \PP^\phi_{\mu}[\MM_t(\ell^t_f)^2]<\infty\,.
		\end{equation}
		Indeed, from \eqref{eqn:quadMell} and \eqref{eqn:hist1stmoment} we have that
		\begin{align*}
			\PP^\phi_{\mu}[\MM_t(\ell^t_f)^2]
			\le \PP^\phi_{\mu} \int_0^t \XX_s((\ell_f^s)^2)\frac{ds}{\phi(s)}
			=\int_0^t \langle\TT_s((\ell_f^s)^2),\delta_0\times m\rangle\frac{ds}{\phi(s)}\,.
		\end{align*}
		We observe that for every path $\omega\in D(\RR^d)$
		\begin{align*}
			\ell_f^s(r+s,\omega^{r+s})
			=\ell_f^s(r,\omega^r)+\1_{(r<s)}\int_0^{s-r}f(\omega_{r+u})du\,.
		\end{align*}
		Thus,
		\begin{align*}
			(\ell_f^s(r+s,\omega^{r+s}))^2
			\le 2(\ell_f^s(r,\omega^r))^2+\1_{(r<s)}2\lt(\int_0^{s-r}f(\omega_{r+u})du\rt)^2\,.
		\end{align*}
		Together with \eqref{est:T2up}, this implies that
		\begin{align}
			\TT_s(\ell_f^s)^2(r,y)
			&=P_{y_r}\lt[(\ell_f^s(r+s,(y\ltimes_r \xi)^{r+s}))^2\rt]
			\nonumber\\&\le2(\ell_f^s(r,y))^2 +\1_{(r<s)}2 P_{y_r}\lt(\int_0^{s-r}f(\xi_{u})du\rt)^2
			\nonumber\\&\lesssim r^2\|f\|_\infty^2+\1_{(r<s)}\cnn_d^2(f)(\gamma_d(s-r)\vee1)^2\,.\label{tmp:Tl2}
		\end{align}
		Therefore, we have
		\begin{align*}
			\int_0^t\langle \TT_s(\ell_f^s)^2,\delta_0\times m\rangle\frac{ds}{\phi(s)} 
			\lesssim   \cnn_d^2(f) \int_0^t (\gamma_d(s)\vee1)^2\frac{ds}{\phi(s)}\,,
		\end{align*}
		which is uniformly bounded in $t$  by our assumptions on $f$ and $\phi$. The estimate \eqref{tmp:L2M} and the convergence of $\MM_t(\ell^t_f)$ follow. 

		(ii) Let $\{t_n\}$ be the sequence defined in \eqref{def.tn}. It suffices to show that
		\begin{equation*}
			\sum_n\frac{1}{\gamma_d^2(t_n)} \PP^\phi_\mu\lt[\(\sup_{t\in[t_{n-1},t_{n}]}\MM_t(\ell^t_f)\)^2\rt]<\infty\,.
		\end{equation*}
		By martingale maximal inequality and the computations in the previous case, we see that
		\begin{align*}
			\PP^\phi_\mu\lt[\(\sup_{t\in[t_{n-1},t_{n}]}\MM_t(\ell^t_f)\)^2\rt]
			\lesssim\PP^\phi_\mu\lt[\(\MM_{t_{n}}(\ell^{t_n}_f)\)^2\rt]
			\lesssim\int_0^{t_{n}}(\gamma_d(s)\vee1)^2\frac{\d s}{\phi(s)}\,.
		\end{align*}
		It remains to show that
		\begin{equation}\label{tmp:1.13}
			\sum_n\frac{1}{\gamma_d^2(t_n)}\int_0^{t_{n}}(\gamma_d(s)\vee1)^2\frac{\d s}{\phi(s)}<\infty\,.
		\end{equation}
		Since $\gamma_d(t_n)=q^n$, $\sum_n q^{-2 n}<\infty $ and $\int_0^1(\gamma_d(s)\vee1)^2\frac{\d s}{\phi(s)}<\infty$, we can replace 0 in the lower limit of each integral above by $1$.
		Consider the case $d<\alpha$.
		Interchanging the order of summation and integration, we see that
		\begin{align*}
			\sum_n\frac{1}{\gamma_d^2(t_n)}\int_1^{t_{n}}(\gamma_d(s)\vee1)^2\frac{\d s}{\phi(s)}
			\lesssim\int_1^\infty\sum_{n:\ q^n>s^{1-\frac d \alpha}}\frac1{q^{2n}} (\gamma_d(s)\vee1)^2\frac{\d s}{\phi(s)}
			\lesssim\int_1^\infty \frac{\d s}{\phi(s)}\,.
		\end{align*}
		In the second estimate above, we use $\sum_{n:\ q^n>s^{1-\frac d \alpha}}\frac1{q^{2n}}\lesssim \frac1{\gamma^2(s)}$.
		It is straightforward to verify that in the case $d=\alpha$, we have the same estimate. That is
		\begin{equation*}
			\sum_n\frac{1}{\gamma_d^2(t_n)}\int_1^{t_{n}}(\gamma_d(s)\vee1)^2\frac{\d s}{\phi(s)}\lesssim \int_1^\infty \frac{\d s}{\phi(s)}\,.
		\end{equation*}
		The integral on the right-hand side above is finite by our assumption. Hence, \eqref{tmp:1.13} follows and so does the result.
	\end{proof}


\subsection{Limit theorems for occupation times} 
\label{sub:occlimit}	
We present the proofs of Proposition \ref{thm:occ} and Theorem \ref{thm:occvague}.
\begin{proof}[Proof of Proposition \ref{thm:occ}(ii)]
Without loss of generality, we assume that $f$ is non-negative. The process $Y_t(f)$ is nonnegative and increasing. Hence, the limit $\lim_{t\to\infty}Y_t(f)$ exists. 
	 	In addition, using Tonelli's theorem, \eqref{eqn:1stXmoment} and \eqref{eqb:hid}, we have
		\begin{align*}
			\lim_{t\to\infty}\PP^\phi_\mu Y_t(f)=\lim_{t\to\infty}\int_0^t \mu(T_sf){\rm d}s=\frac1{(2 \pi)^d}\int_{\RR^d}\mu(\e_ \theta)\hat f(\theta)|\theta|^{-\alpha}{\rm d} \theta\,.
		\end{align*}
		Hence, by Fatou's lemma and the fact that $\cnn_d(f)<\infty$,
		\begin{equation*}
			\PP^\phi_\mu\lim_{t\to\infty} Y_t(f)\le\lim_{t\to\infty}\PP^\phi_\mu Y_t(f)<\infty\,.
		\end{equation*}
		It follows that $\lim_{t\to\infty}Y_t(f)$ is a finite random variable. From Proposition \ref{prop:martcorr}, the limit $\lim_{t\to\infty}\MM_t(\ell^t_f)$ exists and is a finite random variable. Together with the relation \eqref{eqn:YZrelate}, these observations imply Proposition \ref{thm:occ}(ii). 
	\end{proof} 

\begin{proof}[Proof of Proposition \ref{thm:occ}(i)]
	Without loss of generality, we can assume $f\ge0$. Let $q$ be at least 1 and $\{t_n\}=\{t_n(q)\}$ be the sequence defined in \eqref{def.tn}.
	
\noindent \textit{Step 1. Reduce to subsequence convergence:} Suppose that
\begin{equation}\label{crit:discr}
	\lim_n\frac{Y_{t_n(q)}(f)}{\gamma_d(t_n(q))}=\varkappa_d(\alpha)\int_{\RR^d}f(x){\rm d}x\ \ \mathrm{a.s.}
\end{equation}
for all $q>1$. 
For every $t\in[t_n,t_{n+1})$, by monotonicity of $Y_t(f)$, we see that
\begin{align*}
\frac1q\lim_n\frac{Y_{t_{n}}(f)}{\gamma_d( t_{n})}\le\liminf_t \frac{Y_t(f)}{\gamma_d(t)}\le\limsup_t \frac{Y_t(f)}{\gamma_d(t)}\le q\lim_n\frac{Y_{t_{n+1}}(f)}{\gamma_d( t_{n+1})}\,.
\end{align*}
By sending $q\downarrow1$, one has $\displaystyle \lim_{t\to\infty}\frac{Y_{t}(f)}{\gamma_d(t)}=\varkappa_d(\alpha)\int_{\RR^d}f(x){\rm d}x$. 
Now, Proposition \ref{prop:martcorr} (ii) implies \eqref{lim:lowYZ}.
		
\noindent \textit{Step 2. Reduce to $\mu\left(\int_0^{t_n}T_sf\,{\rm d}s\right)$:}
From Lemma \ref{lem:Pcon} and \eqref{est:Tup}, we have
\begin{align*}
	\PP^\phi_\mu\lt|\int_{0}^{t_n}X_s(f)\d s -\mu\(\int_0^{t_n}T_sf\d s\)\rt|^2\lesssim \cnn_d^2(f) \int_{0}^{t_n}\gamma_d^2(s)\frac{\d s}{\phi(s)}\,.
\end{align*}
It follows that
\begin{align*}
	\sum_{n=1}^\infty \frac1{\gamma^2_d(t_n)}\PP^\phi_\mu\lt|\int_{0}^{t_n}X_s(f)\d s -\mu(\int_0^{t_n}T_sf\d s)\rt|^2\lesssim\sum_{n=1}^\infty \frac1{q^{2n}} \int_{0}^{t_n}\gamma_d^2(s) \frac{\d s}{\phi(s)}\,.
\end{align*}
The series on the right-hand side above appeared earlier in \eqref{tmp:1.13}.
The same reasoning as in the proof of Proposition \ref{prop:martcorr} shows that the above series is finite under condition \eqref{con:occphi}.
Hence, Borel-Cantelli lemma implies
\begin{align*}
	\lim_n\frac1{\gamma_d(t_n)}\lt|\int_{0}^{t_n}X_s(f)\d s -\mu\(\int_0^{t_n}T_sf\d s\)\rt|=0\,.
\end{align*}

\noindent \textit{Step 3.} From Lemma \ref{lem.limTf}, \eqref{tmp:crd} and dominated convergence theorem, we deduce that
\begin{equation*}
	\lim_{n}\frac1{\gamma_d(t_n)}\mu\(\int_0^{t_n}T_sf\d s\)=\varkappa_d(\alpha)\uplambda_d(f)\,.
\end{equation*}
Combining previous steps yields the result.			
\end{proof}

\begin{proof}[Proof of Theorem \ref{thm:occvague}]
	We note that each function in $C^2_c(\RR^d)$ satisfies the hypotheses of Proposition \ref{thm:occ}. Therefore, by an analogous argument as in the proof of Theorem \ref{thm:X0vague} on page \pageref{proof:Xvauge},  we can easily deduce Theorem \ref{thm:occvague} from Proposition \ref{thm:occ}. We omit the details.
		\end{proof}
\section*{Acknowledgment}
Good refereeing is gratefully acknowledged by the authors. KL gratefully acknowledge Pacific Institute for the Mathematical Sciences (PIMS) for its support through the Postdoctoral Training Centre in Stochastics during the preparation of this work.

\bibliography{../superprocesses}

\end{document}